\newenvironment{proof}[1][Proof]{\begin{trivlist}
\item[\hskip \labelsep {\bfseries #1}]}{\end{trivlist}}
    \newcommand{\qed}{\nobreak \ifvmode \relax \else
          \ifdim\lastskip<1.5em \hskip-\lastskip
          \hskip1.5em plus0em minus0.5em \fi \nobreak
          \vrule height0.75em width0.5em depth0.25em\fi}
\newtheorem{myth}{Theorem}[section]
\newtheorem{mylem}{Lemma}[section]
\newtheorem{mycor}{Corollary}[section]
\newtheorem{mydef}{Definition}[section]
\newtheorem{myrem}{Remark}[section]
\newtheorem{myexam}{Example}[section]
\begin{document}
\date{To Professor ART Solarin on his 60th Birthday Celebration}
\title{Holomorph of generalized Bol loops II\footnote{2010 Mathematics Subject
Classification. Primary 20N02, 20N05}
\thanks{{\bf Keywords and Phrases :} generalized Bol loop, holomorph of a loop}}
\author{T. G. Jaiy\'e\d ol\'a\thanks{All correspondence to be addressed to this author.} \\
Department of Mathematics,\\
Faculty of Science,\\
Obafemi Awolowo University,\\
Ile Ife 220005, Nigeria.\\
jaiyeolatemitope@yahoo.com\\tjayeola@oauife.edu.ng \and
B. A. Popoola\\
Department of Mathematics,\\
Federal College of Education,\\
Osiele, Abeokuta 110101, Nigeria.\\
bolajipopoola2002@yahoo.com}\maketitle
\begin{abstract}
The notion of the holomorph of a generalized Bol loop (GBL) is characterized afresh. The holomorph of a right inverse property loop (RIPL) is shown to be a GBL if and only if the loop is a GBL and some bijections of the loop are right (middle) regular. The holomorph of a RIPL is shown to be a GBL if and only if the loop is a GBL and some elements of the loop are right (middle) nuclear. Necessary and sufficient condition for the holomorph of a RIPL to be a Bol loop are deduced. Some algebraic properties and commutative diagrams are established for a RIPL whose holomorph is a GBL.
\end{abstract}
\section{Introduction}
\paragraph{}
Let $L$ be a non-empty set. Define a binary operation ($\cdot $) on
$L$ : If $x\cdot y\in L$ for all $x, y\in L$, $(L, \cdot )$ is
called a groupoid. If the equations:
\begin{displaymath}
a\cdot x=b\qquad\textrm{and}\qquad y\cdot a=b
\end{displaymath}
have unique solutions for $x$ and $y$, respectively, for each $a,b\in L$, then $(L, \cdot
)$ is called a quasigroup. For each $x\in L$, the elements $x^\rho
=xJ_\rho\in L$ and $x^\lambda =xJ_\lambda\in L$ such that
$xx^\rho=e^\rho$ and $x^\lambda x=e^\lambda$ are called the right
and left inverse elements of $x$ respectively. Here, $e^\rho\in L$
and $e^\lambda\in L$ satisfy the relations $xe^\rho =x$ and
$e^\lambda x=x$ for all $x\in L$ if they exist in a quasigroup $(L, \cdot
)$ and are respectively called the
right and left identity elements. Now, if $e^\rho=e^\lambda=e\in L$, then $e$ is called the identity element and $(L, \cdot )$ is called a loop. In case $x^\lambda =x^\rho$, then, we simply write $x^\lambda =x^\rho =x^{-1}=xJ$ and refer to $x^{-1}$ as the inverse of $x$. If $x,y,z\in L$ such that $(x\cdot yz)=(xy\cdot z)(x,y,z)$, then $(x,y,z)$ is called the associator of $x,y,z$.

Let $x$ be an arbitrarily fixed element in a loop $(G, \cdot )$. For any $y\in G$, the left and
right translation maps of $x\in G$, $L_x$ and $R_x$ are respectively
defined by
\begin{displaymath}
yL_x=x\cdot y\qquad\textrm{and}\qquad yR_x=y\cdot x
\end{displaymath}
A loop $(L,\cdot)$ is called
a (right) Bol loop if it satisfies the identity
\begin{equation}\label{eq:7}
(xy\cdot z)y=x(yz\cdot y)
\end{equation}
A loop $(L,\cdot)$ is called
a left Bol loop if it satisfies the identity
\begin{equation}\label{eq:7.1}
y(z\cdot yx)=(y\cdot zy)x
\end{equation}
A loop $(L,\cdot)$ is called
a Moufang loop if it satisfies the identity
\begin{equation}\label{eq:7.2}
(xy)\cdot (zx)=(x\cdot yz)x
\end{equation}
A loop $(L,\cdot)$ is called a right inverse property loop (RIPL) if it satisfies right inverse property (RIP)
\begin{equation}\label{eq:9}
(yx)x^{\rho}=y
\end{equation}
A loop $(L,\cdot)$ is called a left inverse property loop (LIPL) if it satisfies left inverse property (LIP)
\begin{equation}\label{eq:9.1}
x^\lambda(xy)=y
\end{equation}
A loop $(L,\cdot)$ is called an automorphic inverse property loop (AIPL) if it satisfies automorphic inverse property (AIP)
\begin{equation}\label{eq:10}
(xy)^{-1}=x^{-1}y^{-1}
\end{equation}

A loop $(L,\cdot)$ in which the mapping $x\mapsto x^2$ is a permutation, is called a Bruck loop if it is both a Bol loop and either AIPL or obeys the identity $xy^2\cdot x=(yx)^2$. (Robinson \cite{25})

Let $(L,\cdot)$ be a loop with a single valued self-map $\sigma:x\longrightarrow\sigma(x)$:

$(L,\cdot )$ is called a $\sigma$-generalized (right) Bol loop or right B-loop if it satisfies
the identity
\begin{equation}\label{eq:8}
(xy\cdot z)\sigma(y)=x(yz\cdot\sigma(y))
\end{equation}
$(L,\cdot )$ is called a $\sigma$-generalized left Bol loop or left B-loop if it satisfies
the identity
\begin{equation}\label{eq:8.1}
\sigma(y)(z\cdot yx)=(\sigma(y)\cdot zy)x
\end{equation}
$(L,\cdot )$ is called a $\sigma$-M-loop if it satisfies
the identity
\begin{equation}\label{eq:8.3}
(xy)\cdot (z\sigma(x))=(x\cdot yz)\sigma(x)
\end{equation}

Let $(G,\cdot )$ be a groupoid (quasigroup, loop) and let $A,B$ and $C$ be three bijective
mappings, that map $G$ onto $G$. The identity map on $G$ shall be denoted by $I$. The triple $\alpha =(A,B,C)$ is
called an autotopism of $(G,\cdot )$ if and only if
\begin{displaymath}
xA\cdot yB=(x\cdot y)C~\forall~x,y\in G.
\end{displaymath}
Such triples form a group
$AUT(G,\cdot )$ called the autotopism group of $(G,\cdot )$.

If $A=B=C$, then $A$ is called an automorphism of the
groupoid(quasigroup, loop) $(G,\cdot )$. Such bijections form a
group $AUM(G,\cdot )$ called the automorphism group of $(G,\cdot )$. Let $G$ and $H$ be groups such that $\varphi:G\to H$ is an isomorphism. If $\varphi (g)=h$, then this would be expressed as $g\overset{\varphi}{\approxeq}h$.

Given any two sets $X$ and $Y$. The statement '$f~:X\rightarrow Y$ is defined as $f(x)=y,~x\in X,~y\in Y$' will be at times be expressed as '$f~:X\rightarrow Y~\uparrow~f(x)=y$'.

The right nucleus of $(L,\cdot)$ is defined by $N_\rho (L,\cdot)=\{x\in L~|~zy\cdot x=z\cdot yx~\forall~y,z\in L\}$.
The middle nucleus of $(L,\cdot)$ is defined by $N_\mu (L,\cdot)=\{x\in L~|~zx\cdot y=z\cdot xy~\forall~y,z\in L\}$.

\begin{mydef}
Let $(G,\cdot )$ be a quasigroup. Then
\begin{enumerate}
\item a bijection $U$ is called autotopic if there exists $(U,V,W)\in AUT(G,\cdot )$; the set of all such mappings forms a group $\Sigma(G,\cdot )$.
\item a bijection $U$ is called $\rho$-regular if there exists $(I,U,U)\in AUT(G,\cdot )$; the set of all such mappings forms a group $\mathcal{P}(G,\cdot )$.
\item a bijection $U$ is called $\mu$-regular if there exists a bijection $U'$ such that $(U,U'^{-1},I)\in AUT(G,\cdot )$. $U'$ is called the adjoint of $U$. The set of all $\mu$-regular mappings forms a group $\Phi(G,\cdot )\le\Sigma(G,\cdot )$. The set of all adjoint mapping
    forms a group $\Psi(G,\cdot )$.
\end{enumerate}
\end{mydef}

\begin{mydef}
Let $(Q,\cdot)$ be a loop and $A(Q)\le AUM(Q,\cdot)$
be a group of automorphisms of the loop $(Q,\cdot)$. Let $H=A(Q)\times Q$. Define $\circ$ on $H$ as
\begin{displaymath}
(\alpha,x)\circ(\beta,y)=(\alpha\beta,x\beta\cdot y)~\textrm{for all}~(\alpha,x),(\beta,y)\in H.
\end{displaymath}
$(H,\circ)$ is a loop and is called the A-holomorph of $(Q,\cdot)$.
\end{mydef}
The left and right translations maps of an element $(\alpha,x)\in H$ are respectively
denoted by $\mathbb{L}_{(\alpha,x)}$ and $\mathbb{R}_{(\alpha,x)}$.

\begin{myrem}
$(H,\circ)$ has a subloop $\{I\}\times Q$ that is isomorphic to $(Q,\cdot)$. As observed in Lemma 6.1 of Robinson \cite{25}, given a loop $(Q,\cdot)$ with an A-holomorph $(H,\circ )$, $(H,\circ )$ is a Bol loop if and only if $(Q,\cdot)$ is a $\theta$-generalized Bol loop for all $\theta\in A(Q)$. Also in Theorem 6.1 of Robinson \cite{25}, it was shown that $(H,\circ )$ is a Bol loop if and only if $(Q,\cdot)$ is a Bol loop and $x^{-1}\cdot x\theta\in N_\rho(Q,\cdot)$ for all $\theta\in A(Q)$.
\end{myrem}

The birth of Bol loops can be traced back to Gerrit Bol \cite{16} in 1937 when he established the relationship between Bol loops and Moufang loops, the latter which was discovered by Ruth Moufang \cite{19}. Thereafter, a theory of Bol loops was evolved through the Ph.D. thesis of Robinson \cite{25} in 1964 where he studied the algebraic properties of Bol loops, Moufang loops and Bruck loops, isotopy of Bol loop and some other notions on Bol loops. Some later results on Bol loops and Bruck loops can be found in Bruck \cite{7}, Solarin \cite{38}, Adeniran and Akinleye \cite{1}, Bruck \cite{8}, Burn \cite{bur}, Gerrit Bol \cite{16}, Blaschke and Bol \cite{6}, Sharma \cite{30,31}, Adeniran and Solarin \cite{2}. In the 1980s, the study and construction of finite Bol loops caught the attention of many researchers among whom are Burn \cite{bur,bur1,bur2}, Solarin and Sharma \cite{32.1,34,35} and others like Chein and Goodaire \cite{gg22,chein1,chein2}, Foguel at. al. \cite{phd127}, Kinyon and Phillips \cite{phd115,phd121} in the present millennium. One of the most important results in the theory of Bol loops is the solution of the open problem on the existence of a simple Bol loop which was finally laid to rest by Nagy \cite{nagy1,nagy2,nagy3}.

In 1978, Sharma and Sabinin \cite{32,33} introduced and studied the algebraic properties of the notion of half-Bol loops(left B-loops). Thereafter, Adeniran \cite{phd111}, Adeniran and Akinleye \cite{1}, Adeniran and Solarin \cite{3} studied the algebraic properties of generalized Bol loops. Also, Ajmal \cite{5} introduced and studied the algebraic properties of generalized Bol loops and their relationship with M-loops.

Some of their results are highlighted below.
\begin{myth}(Adeniran and Akinleye \cite{1})\label{1}

If $(L,\cdot )$ is a generalized Bol
loop, then:
\begin{enumerate}
\item $(L,\cdot)$ is an RIPL.
\item $x^{\lambda}=x^{\rho}$ for all $x\in L$.
\item $R_{y\cdot \sigma(y)}=R_yR_{\sigma(y)}$ for all $y\in L$.
\item $[xy\cdot\sigma(x)]^{-1}=(\sigma(x))^{-1}y^{-1}\cdot x^{-1}$ for all $x,y\in L$.
\item $(R_{y^{-1}}, L_yR_{\sigma(y)},
R_{\sigma(y)}),(R_y^{-1}, L_yR_{\sigma(y)},
R_{\sigma(y)})\in AUT(L,\cdot )$ for all $y\in L$.
\end{enumerate}
\end{myth}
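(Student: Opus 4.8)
The plan is to translate the defining identity $(xy\cdot z)\sigma(y)=x(yz\cdot\sigma(y))$ into the language of the translation maps $R$ and $L$, and then read off all five assertions from a single autotopism together with two one-line substitutions. The guiding principle is that (3) and the autotopism of (5) are pure consequences of the identity, while (1), (2) and (4) follow by exploiting inverses once the autotopism is available.

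First I would dispatch (3) by setting $z=e$ in the defining identity, which collapses it to $(xy)\sigma(y)=x(y\cdot\sigma(y))$; since this holds for every $x$ it says exactly $R_yR_{\sigma(y)}=R_{y\cdot\sigma(y)}$. Next, for (1), I would substitute $z=y^{\rho}$. Because $yy^{\rho}=e$, the right-hand side becomes $x\cdot\sigma(y)$ while the left-hand side is $(xy\cdot y^{\rho})\sigma(y)$; cancelling the bijection $R_{\sigma(y)}$ yields the right inverse property $(xy)y^{\rho}=x$. Assertion (2) is then formal: RIP gives $R_yR_{y^{\rho}}=I$, so $R_{y^{\rho}}=R_y^{-1}$, and evaluating both maps at $e$ (where $eR_{y^{\rho}}=y^{\rho}$ and $eR_y^{-1}=y^{\lambda}$) forces $y^{\rho}=y^{\lambda}$. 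From this point $x^{-1}$ is unambiguous and $R_y^{-1}=R_{y^{-1}}$.

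The central step is (5). Rather than guessing the triple, I would reparametrise the defining identity by writing $a=xy$, i.e. $x=aR_y^{-1}$. The left-hand side becomes $aR_zR_{\sigma(y)}$ and, observing that $yz\cdot\sigma(y)=zL_yR_{\sigma(y)}$, the right-hand side becomes $aR_y^{-1}\cdot zL_yR_{\sigma(y)}$; comparing them gives $aR_y^{-1}\cdot zL_yR_{\sigma(y)}=(a\cdot z)R_{\sigma(y)}$, which is precisely $(R_y^{-1},L_yR_{\sigma(y)},R_{\sigma(y)})\in AUT(L,\cdot)$. Using $R_y^{-1}=R_{y^{-1}}$ from (1)--(2) immediately converts this into the first stated triple as well. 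Finally, (4) should fall out of this autotopism. Writing the relation as $(uy^{-1})\cdot((yv)\sigma(y))=(uv)\sigma(y)$, I would rename the parameter $y$ to $x$, set $v=y$, and choose $u=(\sigma(x))^{-1}y^{-1}$. The right inverse property makes $uy=(\sigma(x))^{-1}$, so the right-hand side reduces to $(\sigma(x))^{-1}\sigma(x)=e$; the relation then reads $\big((\sigma(x))^{-1}y^{-1}\cdot x^{-1}\big)\cdot(xy\cdot\sigma(x))=e$, identifying the left factor as the inverse of $xy\cdot\sigma(x)$ by (2).

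The hard part will be (5): the correct reparametrisation $x=aR_y^{-1}$ and the bookkeeping that rewrites $yz\cdot\sigma(y)$ as $zL_yR_{\sigma(y)}$ is what makes the autotopism visible, and once it is in hand everything else is mechanical. The only subtlety to respect is the ordering of the argument, since RIP and (2) must be established before $R_y^{-1}$ may legitimately be replaced by $R_{y^{-1}}$, and hence before both the first form of the autotopism in (5) and the rebracketing used in (4).
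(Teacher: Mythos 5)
Your proof is correct, but note that the paper itself offers no proof of this statement: Theorem~\ref{1} is quoted as background from Adeniran and Akinleye \cite{1} (``Some of their results are highlighted below''), so there is nothing internal to compare against. Judged on its own merits, your argument is sound and complete: the $z=e$ substitution gives (3); the $z=y^{\rho}$ substitution plus bijectivity of $R_{\sigma(y)}$ gives RIP; $R_{y^{\rho}}=R_y^{-1}$ evaluated at $e$ gives (2); the reparametrisation $x=aR_y^{-1}$ turns the defining identity directly into the autotopism $(R_y^{-1},L_yR_{\sigma(y)},R_{\sigma(y)})$, which equals $(R_{y^{-1}},L_yR_{\sigma(y)},R_{\sigma(y)})$ once $R_y^{-1}=R_{y^{-1}}$ is known; and your specialisation $u=(\sigma(x))^{-1}y^{-1}$, $v=y$ correctly reduces the autotopism relation to $\bigl((\sigma(x))^{-1}y^{-1}\cdot x^{-1}\bigr)\cdot\bigl(xy\cdot\sigma(x)\bigr)=e$, which with (2) yields (4). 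Your ordering discipline (RIP and (2) before replacing $R_y^{-1}$ by $R_{y^{-1}}$) is exactly the care the argument needs. Moreover, your route to (5) is consistent with how the paper actually uses this theorem: the equivalence ``$(Q,\cdot)$ is a $\sigma$-GBL iff $(R_x^{-1},L_xR_{\sigma(x)},R_{\sigma(x)})\in AUT(Q,\cdot)$'' is invoked without proof at the start of the proof of Theorem~\ref{4.1}, and your reparametrisation supplies precisely the missing derivation.
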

\begin{myth}(Sharma and Sabinin \cite{32})\label{2}

If $(L,\cdot )$ is a half Bol loop, then:
\begin{enumerate}
\item $(L,\cdot)$ is an LIPL.
\item $x^{\lambda}=x^{\rho}$ for all $x\in L$.
\item $L_{(x)}L_{(\sigma(x))} =
L_{(\sigma(x)x)}$ for all $x\in L$.
\item $(\sigma(x)\cdot yx)^{-1} = x^{-1}\cdot y^{-1} (\sigma(x))^{-1}$ for all $x,y\in L$.
\item $(R_{(x)}L_{(\sigma(x))},
L_{(x)^{-1}}, L_{(\sigma(x))}),(R_{(\sigma(x))}L_{(x)^{-1}}, L_{\sigma(x)}, L_{(x)^{-1}})\in AUT(L,\cdot )$ for all $x\in L$.
\end{enumerate}
\end{myth}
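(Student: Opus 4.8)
The plan is to reduce the entire statement to Theorem \ref{1} by passing to the opposite loop. Define a new operation $\ast$ on $L$ by $a\ast b=b\cdot a$. Since $(L,\cdot)$ is a loop with identity $e$, so is $(L,\ast)$, with the same $e$; the one-sided inverses interchange, $x^{\rho_\ast}=x^\lambda$ and $x^{\lambda_\ast}=x^\rho$, and the translation maps swap roles, $R^\ast_a=L_a$ and $L^\ast_a=R_a$. First I would verify the crucial equivalence: expanding both sides of the generalized (right) Bol identity (\ref{eq:8}) written in $(L,\ast)$,
\begin{displaymath}
(x\ast y\ast z)\ast\sigma(y)=x\ast(y\ast z\ast\sigma(y)),
\end{displaymath}
and rewriting each $\ast$-product through $a\ast b=b\cdot a$, turns it termwise into $\sigma(y)(z\cdot yx)=(\sigma(y)\cdot zy)x$, which is precisely the half-Bol identity (\ref{eq:8.1}). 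Hence $(L,\cdot)$ is a half Bol loop with self-map $\sigma$ if and only if $(L,\ast)$ is a generalized Bol loop with the same $\sigma$, so Theorem \ref{1} applies verbatim to $(L,\ast)$.

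With this correspondence in hand, parts (1)--(4) are immediate translations. Theorem \ref{1}(1) says $(L,\ast)$ is an RIPL; since $\rho$-inverses in $(L,\ast)$ are $\lambda$-inverses in $(L,\cdot)$, the $\ast$-RIP $(y\ast x)\ast x^{\rho_\ast}=y$ reads $x^\lambda(xy)=y$ in $(L,\cdot)$, i.e. the LIP (\ref{eq:9.1}); this gives (1). Part (2) is the translation of Theorem \ref{1}(2), $x^{\lambda_\ast}=x^{\rho_\ast}$, which is exactly $x^\rho=x^\lambda$. For (3), Theorem \ref{1}(3) gives $R^\ast_{y\ast\sigma(y)}=R^\ast_yR^\ast_{\sigma(y)}$; using $y\ast\sigma(y)=\sigma(y)y$ and $R^\ast_a=L_a$ this becomes $L_yL_{\sigma(y)}=L_{\sigma(y)y}$, the asserted identity. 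Part (4) is obtained the same way from Theorem \ref{1}(4): substituting $a\ast b=b\cdot a$ into $[x\ast y\ast\sigma(x)]^{-1}=\sigma(x)^{-1}\ast y^{-1}\ast x^{-1}$ (two-sided inverses coinciding with one-sided ones by (2)) yields $(\sigma(x)\cdot yx)^{-1}=x^{-1}\cdot y^{-1}(\sigma(x))^{-1}$.

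For the autotopisms in (5) I would combine the translation with a short direct check. Note that $(A,B,C)\in AUT(L,\ast)$ if and only if $(B,A,C)\in AUT(L,\cdot)$; applying this to Theorem \ref{1}(5), and using $R^\ast_a=L_a$, $L^\ast_a=R_a$, sends $(R^\ast_{y^{-1}},L^\ast_yR^\ast_{\sigma(y)},R^\ast_{\sigma(y)})$ to $(R_yL_{\sigma(y)},L_{y^{-1}},L_{\sigma(y)})\in AUT(L,\cdot)$, the first asserted triple. Equivalently, and more transparently, one reads this triple straight off the defining identity: writing it as $\sigma(p)(q\cdot pr)=(\sigma(p)\cdot qp)r$, setting $p:=x$, $q:=u$, $r:=x^{-1}v$, and simplifying $x(x^{-1}v)=v$ by the LIP just established, gives
\begin{displaymath}
(\sigma(x)\cdot ux)(x^{-1}v)=\sigma(x)(uv)\qquad\textrm{for all }u,v\in L,
\end{displaymath}
which is exactly $(R_xL_{\sigma(x)},L_{x^{-1}},L_{\sigma(x)})\in AUT(L,\cdot)$. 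The second triple is then an equivalent presentation of the first, coinciding with it through the inverse-property identification $L_{x^{-1}}=L_x^{-1}$ (valid since the LIP gives $L_xL_{x^{-1}}=I$ with $L_x$ bijective), exactly as the two triples of Theorem \ref{1}(5) coincide through the RIP. The main obstacle is bookkeeping rather than depth: one must keep the composition convention $y(FG)=(yF)G$ straight when transporting translations and autotopisms across $\ast$, and must invoke the LIP at the right moment both to collapse $x(x^{-1}v)$ and to identify $L_{x^{-1}}$ with $L_x^{-1}$; once these are handled, every clause falls out mechanically.
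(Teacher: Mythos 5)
Your duality reduction is sound as far as it goes, and it is worth noting that the paper itself offers no proof of this statement (it is quoted from Sharma--Sabinin \cite{32}), so your opposite-loop argument is a legitimate route: the verification that $(L,\cdot)$ is half Bol iff $(L,\ast)$ satisfies \eqref{eq:8}, the dictionary $R^\ast_a=L_a$, $L^\ast_a=R_a$, $x^{\rho_\ast}=x^\lambda$, the rule $(A,B,C)\in AUT(L,\ast)\Leftrightarrow(B,A,C)\in AUT(L,\cdot)$, and the translations giving parts (1)--(4) and the first triple of part (5) are all correct.

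The genuine gap is your dismissal of the second triple in part (5). You claim it ``coincides with the first through $L_{x^{-1}}=L_x^{-1}$,'' but it does not: the first triple is $\big(R_xL_{\sigma(x)},\,L_{x^{-1}},\,L_{\sigma(x)}\big)$ while the second is $\big(R_{\sigma(x)}L_{x^{-1}},\,L_{\sigma(x)},\,L_{x^{-1}}\big)$ --- the middle and last components are interchanged ($L_{x^{-1}}$ versus $L_{\sigma(x)}$), and the first components $R_xL_{\sigma(x)}$ and $R_{\sigma(x)}L_{x^{-1}}$ are different maps altogether; no inverse-property identification equates these triples. The analogy with Theorem \ref{1}(5) fails because there the two triples differ only in the first slot ($R_{y^{-1}}$ versus $R_y^{-1}$), which RIP does identify, whereas here the second triple is a structurally different autotopism. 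The missing step is the LIP analogue of the transform the paper uses in Theorem \ref{7}: in an LIPL, $(U,V,W)\in AUT(L,\cdot)\Rightarrow (JUJ,W,V)\in AUT(L,\cdot)$. Applying this to your first triple gives $\big(JR_xL_{\sigma(x)}J,\,L_{\sigma(x)},\,L_{x^{-1}}\big)$, and part (4) lets you compute $JR_xL_{\sigma(x)}J=R_{(\sigma(x))^{-1}}L_{x^{-1}}$, which produces the second triple --- note, however, that this derivation yields $R_{(\sigma(x))^{-1}}$ rather than the $R_{(\sigma(x))}$ printed in the statement (the printed form fails already in a group with $\sigma=I$, so the quoted statement itself appears to carry a misprint); either way, your proposal as written neither derives the stated triple nor the corrected one, so part (5) is only half proved.
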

\begin{myth}(Ajmal \cite{5})\label{3}

Let $(L,\cdot )$ be a loop. The following statements are equivalent:
\begin{enumerate}
\item $(L,\cdot)$ is an M-loop;
\item $(L,\cdot)$ is both a left B-loop and a right B-loop;
\item $(L,\cdot)$ is a right B-loop and satisfies the LIP;
\item $(L,\cdot)$ is a left B-loop and satisfies the RIP.
\end{enumerate}
\end{myth}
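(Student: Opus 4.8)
The plan is to recast each of the three structures as a single autotopism of $(L,\cdot)$ and to pass between these autotopisms with the help of the inverse properties that Theorems \ref{1} and \ref{2} already attach to the one-sided $B$-loops. Reading \eqref{eq:8.3} as an identity in $y$ and $z$ with $x$ held fixed, and writing $xy=yL_x$ and $z\sigma(x)=zR_{\sigma(x)}$, shows that $(L,\cdot)$ is a $\sigma$-$M$-loop if and only if
\[
(L_x,\,R_{\sigma(x)},\,L_xR_{\sigma(x)})\in AUT(L,\cdot)\qquad\text{for all }x\in L.
\]
Similarly, for a RIPL the right $B$-loop identity \eqref{eq:8} is equivalent to $(R_{y^{-1}},L_yR_{\sigma(y)},R_{\sigma(y)})\in AUT(L,\cdot)$ for all $y$ --- the very triple produced in Theorem \ref{1}(5) --- and the left $B$-loop identity \eqref{eq:8.1} to the triple of Theorem \ref{2}(5). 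Throughout I will use that a right $B$-loop is a RIPL (Theorem \ref{1}(1)) and a left $B$-loop is an LIPL (Theorem \ref{2}(1)).

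The implications $(2)\Rightarrow(3)$ and $(2)\Rightarrow(4)$ are immediate. If $(L,\cdot)$ is simultaneously a left and a right $B$-loop, then it is an LIPL by Theorem \ref{2}(1), hence a right $B$-loop with the LIP, which is $(3)$; and it is a RIPL by Theorem \ref{1}(1), hence a left $B$-loop with the RIP, which is $(4)$. Moreover each of $(3)$ and $(4)$ secretly carries the \emph{full} inverse property: in $(3)$ the right $B$-loop supplies the RIP (Theorem \ref{1}(1)) to complement the assumed LIP, and in $(4)$ the left $B$-loop supplies the LIP (Theorem \ref{2}(1)) to complement the assumed RIP. So $(3)$ and $(4)$ both describe a one-sided $B$-loop that is an IP-loop, and in an IP-loop the key relations $L_x^{-1}=L_{x^{-1}}$ and $R_x^{-1}=R_{x^{-1}}$ are available.

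It remains to treat the computational implications $(1)\Rightarrow(2)$, $(3)\Rightarrow(1)$ and $(4)\Rightarrow(1)$. For $(1)\Rightarrow(2)$ I would first read off the LIP from the $M$-loop autotopism: putting $y=x^{\rho}$ in \eqref{eq:8.3} and cancelling the common right factor $\sigma(x)$ gives $x(x^{\rho}z)=z$ for all $z$, whence $x^{\rho}x=e$ on taking $z=x$, so that $x^{\rho}=x^{\lambda}$ and $(L,\cdot)$ is an LIPL with $L_x^{-1}=L_{x^{-1}}$. Using this I would transform the $M$-loop triple $(L_x,R_{\sigma(x)},L_xR_{\sigma(x)})$, by substituting inverse elements and composing with elementary autotopisms, into both the right Bol triple and the left Bol triple, thereby proving $(2)$; the RIP then becomes automatic through Theorem \ref{1}(1). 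For $(3)\Rightarrow(1)$ I would run the procedure backwards: with the full IP now in hand, I would convert the right Bol triple of Theorem \ref{1}(5) into the $M$-loop triple, i.e.\ derive \eqref{eq:8.3} from \eqref{eq:8} and the inverse property --- the generalized counterpart of the classical statement that a right Bol IP-loop is Moufang. Finally, $(4)\Rightarrow(1)$ is obtained by the mirror-image computation: passing to the opposite loop $x\ast y=y\cdot x$ interchanges \eqref{eq:8} with \eqref{eq:8.1} and swaps the two inverse properties, so the derivation of the $M$-loop identity from a left Bol IP-loop follows the same pattern as the right-handed case.

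The main obstacle is exactly the family of conversions between the $M$-loop triple and the Bol triples in the previous paragraph. The difficulty is structural: $\sigma$ acts on the \emph{first} argument in \eqref{eq:8.3} but on the \emph{middle} argument in \eqref{eq:8} and \eqref{eq:8.1}, so one cannot pass from one triple to another by a formal rearrangement inside $AUT(L,\cdot)$ alone; the defining identity must be interleaved with the inverse-property relations $L_x^{-1}=L_{x^{-1}}$ and $R_x^{-1}=R_{x^{-1}}$ at precisely the right places, with careful attention to where $\sigma$ ends up after each substitution. Securing the second inverse property for an $M$-loop is part of this same difficulty, since it does not follow from the autotopism-group structure alone but requires the Moufang-type manipulation. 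Establishing $x^{\lambda}=x^{\rho}$ at the outset, so that the inverse is unambiguous and the inverse-property relations are genuinely two-sided, is what keeps the bookkeeping under control.
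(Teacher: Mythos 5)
Your autotopism translations are sound --- \eqref{eq:8.3} holding for all $y,z$ with $x$ fixed is indeed equivalent to $(L_x,R_{\sigma(x)},L_xR_{\sigma(x)})\in AUT(L,\cdot)$, the easy implications $(2)\Rightarrow(3)$ and $(2)\Rightarrow(4)$ are correctly dispatched via Theorems \ref{1}(1) and \ref{2}(1), and your extraction of the LIP from the $M$-loop identity (set $y=x^{\rho}$, cancel $R_{\sigma(x)}$, then put $z=x$) is correct. But the proof stops exactly where the theorem begins. For $(1)\Rightarrow(2)$ and $(3)\Rightarrow(1)$ you say only that you ``would transform'' one triple into another ``by substituting inverse elements and composing with elementary autotopisms,'' and you then concede that this family of conversions is ``the main obstacle.'' That obstacle is the entire content of Ajmal's theorem: precisely because $\sigma$ attaches to the first argument in \eqref{eq:8.3} but to the middle argument in \eqref{eq:8} and \eqref{eq:8.1}, no formal rearrangement inside $AUT(L,\cdot)$ suffices, and no identity-level computation is exhibited in checkable form. (Note that the paper itself gives no proof either --- Theorem \ref{3} is quoted from Ajmal \cite{5} as background --- so your argument has to stand on its own, and as written it does not.)

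There is also a concrete flaw in your $(4)\Rightarrow(1)$. Passing to the opposite loop $x\ast y=y\cdot x$ does interchange \eqref{eq:8} with \eqref{eq:8.1} and swaps the LIP with the RIP, but \eqref{eq:8.3} is \emph{not} self-dual: rewriting \eqref{eq:8.3} for $(L,\ast)$ in terms of $(\cdot)$ gives the identity $(\sigma(x)z)\cdot(yx)=\sigma(x)(zy\cdot x)$, with $\sigma(x)$ on the opposite flanks; this coincides with \eqref{eq:8.3} only in the classical case $\sigma=I$, and even there the equivalence of the two middle Moufang identities needs its own argument. So from ``the opposite loop is a $\sigma$-$M$-loop'' you cannot conclude $(1)$ for $(L,\cdot)$ without already knowing the theorem, since closure of the $\sigma$-$M$-loop class under opposites is itself a consequence of the equivalence $(1)\Leftrightarrow(2)$, condition $(2)$ being the one that is manifestly self-dual. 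The repair is to aim for $(4)\Rightarrow(2)$ instead: from $(4)$ the opposite loop satisfies $(3)$, hence satisfies $(2)$ once $(1),(2),(3)$ are known to be equivalent, and then $(2)$ transfers back to $(L,\cdot)$ by self-duality --- but this again presupposes that the missing implications $(1)\Rightarrow(2)$ and $(3)\Rightarrow(1)$ have actually been carried out first.
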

\begin{myth}(Ajmal \cite{5})\label{4}

Every isotope of a right B-loop with the LIP is a right B-loop.
\end{myth}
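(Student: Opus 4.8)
The plan is to recast the defining identity as an autotopism condition and then transport that condition through an isotopism. First I would invoke Theorem \ref{3}: since a right $B$-loop automatically satisfies the RIP (Theorem \ref{1}(1)), a right $B$-loop with the LIP is precisely an $M$-loop, hence simultaneously a left and a right $B$-loop and in particular an inverse property loop, so that $R_{y^{-1}}=R_y^{-1}$ and $L_{y^{-1}}=L_y^{-1}$ for all $y$. Because every isotope of a loop is isomorphic to a principal isotope, and an isomorphism carries a $\sigma$-generalized right Bol loop onto one whose structure map is the corresponding conjugate of $\sigma$, it suffices to prove that each principal isotope $(L,\circ)$, say $x\circ y=xR_b^{-1}\cdot yL_a^{-1}$ for fixed $a,b\in L$, is a right $B$-loop for a suitable self-map $\sigma'$.

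The key reformulation is that, by Theorem \ref{1}(5) together with the RIP, $(L,\cdot)$ is a $\sigma$-generalized right Bol loop if and only if $\theta_y:=(R_{y^{-1}},L_yR_{\sigma(y)},R_{\sigma(y)})\in AUT(L,\cdot)$ for every $y\in L$; indeed, replacing $x$ by $xy$ in $xR_{y^{-1}}\cdot zL_yR_{\sigma(y)}=(xz)R_{\sigma(y)}$ recovers $(xy\cdot z)\sigma(y)=x(yz\cdot\sigma(y))$. Next I would record the standard transfer of autotopisms under a principal isotopy: with the isotopism components $R_b^{-1},L_a^{-1},I$ relating $(L,\circ)$ to $(L,\cdot)$, one has $(A,B,C)\in AUT(L,\cdot)$ if and only if $(R_b^{-1}AR_b,\,L_a^{-1}BL_a,\,C)\in AUT(L,\circ)$. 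Applying this to $\theta_y$ yields $(R_b^{-1}R_{y^{-1}}R_b,\,L_a^{-1}L_yR_{\sigma(y)}L_a,\,R_{\sigma(y)})\in AUT(L,\circ)$ for every $y$.

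The heart of the argument is then to exhibit a self-map $\sigma'$ of $L$ and a reindexing $y\mapsto w$ for which $\theta_y'$ coincides with the canonical right Bol triple $\bigl(R^{\circ}_{w^{-1_\circ}},\,L^{\circ}_wR^{\circ}_{\sigma'(w)},\,R^{\circ}_{\sigma'(w)}\bigr)$ of $(L,\circ)$, where $R^{\circ}_z$ and $L^{\circ}_z$ denote the $\circ$-translations and $w^{-1_\circ}$ the $\circ$-inverse. Reading the three coordinates in terms of $\circ$-translations, the third coordinate $R_{\sigma(y)}$ must be written as $R^{\circ}_{\sigma'(w)}=R_b^{-1}R_{\sigma'(w)L_a^{-1}}$, which simultaneously forces the relation between $\sigma'$ and $\sigma$ and pins down $w$; the first coordinate then fixes $w^{-1_\circ}$, and the middle coordinate must be checked for consistency. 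This verification is the main obstacle: the conjugated maps $R_b^{-1}R_{y^{-1}}R_b$ and $L_a^{-1}L_yR_{\sigma(y)}L_a$ collapse to single $\circ$-translations only after repeatedly invoking the inverse property and the companion \emph{left} $B$-loop autotopisms supplied by the $M$-loop hypothesis (Theorem \ref{3} and Theorem \ref{2}(5)). I expect the inverse property to force $\sigma'$ to be the conjugate of $\sigma$ by the twist $R_b,L_a$, and the left/right $B$-loop symmetry of an $M$-loop to be exactly what makes the middle coordinate match, thereby exhibiting $(L,\circ)$ as a $\sigma'$-generalized right Bol loop.
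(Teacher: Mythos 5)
First, note that the paper itself states Theorem~\ref{4} only as a quoted result of Ajmal~\cite{5} and supplies no proof, so your attempt has to be judged on its own terms. Your preparatory moves are all sound: by Theorems~\ref{1} and~\ref{3}, a right B-loop with the LIP is an M-loop, hence an inverse property loop with $R_{y^{-1}}=R_y^{-1}$ and $L_{y^{-1}}=L_y^{-1}$; every isotope is isomorphic to a principal isotope $x\circ y=xR_b^{-1}\cdot yL_a^{-1}$; the generalized right Bol identity is equivalent to $(R_{y^{-1}},L_yR_{\sigma(y)},R_{\sigma(y)})\in AUT(L,\cdot)$ for all $y$; and your transfer formula $(A,B,C)\in AUT(L,\cdot)\Leftrightarrow (R_b^{-1}AR_b,\,L_a^{-1}BL_a,\,C)\in AUT(L,\circ)$ is correct.

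The genuine gap is exactly where you locate ``the heart of the argument,'' and the strategy you propose there cannot work as stated. You want the transferred triple $\big(R_b^{-1}R_{y^{-1}}R_b,\,L_a^{-1}L_yR_{\sigma(y)}L_a,\,R_{\sigma(y)}\big)$ to \emph{coincide}, after reindexing, with a canonical triple $\big(R^\circ_{w^{-1_\circ}},\,L^\circ_wR^\circ_{\sigma'(w)},\,R^\circ_{\sigma'(w)}\big)$. Since $R^\circ_z=R_b^{-1}R_{zL_a^{-1}}$, equality of third components reads $R_b^{-1}R_{\sigma'(w)L_a^{-1}}=R_{\sigma(y)}$, i.e. $R_{\sigma'(w)L_a^{-1}}=R_bR_{\sigma(y)}$; evaluating at $e$ forces $\sigma'(w)L_a^{-1}=b\sigma(y)$, and the operator identity then becomes $R_{b\sigma(y)}=R_bR_{\sigma(y)}$, that is, $xb\cdot\sigma(y)=x\cdot b\sigma(y)$ for all $x$. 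This is an associativity condition on $\sigma(y)$ against the \emph{arbitrary} isotopy parameter $b$, which a generalized Bol loop does not provide: Theorem~\ref{1}(3) gives $R_yR_{\sigma(y)}=R_{y\cdot\sigma(y)}$ only for the special pair $(y,\sigma(y))$, not for $(b,\sigma(y))$. Indeed, already in the case $\sigma=I$ (a Moufang loop, since right Bol plus LIP), demanding this for all $b,y$ forces the loop to be a group, so exact coincidence of triples is hopeless for any proper example. What must be done instead is to work inside the \emph{group} $AUT(L,\circ)$ and manufacture the canonical triple as a product of several transferred autotopisms --- the right-Bol triples together with the left-Bol companion triples from Theorem~\ref{2}(5) that the M-loop structure supplies --- or, equivalently, to verify the $\sigma'$-Bol identity in $(L,\circ)$ by direct computation. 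That composition is precisely the step you defer (``I expect\ldots''), so the proof is missing its decisive ingredient.
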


\begin{myexam}
Let $R$ be a ring of all $2\times 2$ matrices taken over the field of three elements and let $G=R\times R$. For all $(u,f),(v,g)\in G$,
define $(u,f)\cdot (v,g)=(u+v,f+g+uv^3)$. Then $(G,\cdot )$ is a loop which is not a right Bol loop but which is a $\sigma$-generalized Bol loop with $\sigma~:x\mapsto x^2$ .
\end{myexam}
We shall need the following result.
\begin{myth}\label{sim}(Belousov \cite{belousov})

Let $(G,\cdot )$ be a loop with an identity element $e$. Let
\begin{displaymath}
\psi : \mathcal{P}(G,\cdot)\rightarrow N_{\rho}(G,\cdot)\uparrow \psi(U)=eU,~\phi :\Phi(G,\cdot)\rightarrow \Psi(G,\cdot)~\uparrow \phi(U)=U',
\end{displaymath}
\begin{displaymath}
\varpi :\Phi(G,\cdot)\rightarrow N_{\mu}(G,\cdot)\uparrow \varpi(U)=eU~ \textrm{and}~ \beta: \Psi(G,\cdot)\rightarrow N_{\mu}(G,\cdot)\uparrow \beta(U')=eU'.
\end{displaymath}
Then $\mathcal{P}(G,\cdot )\stackrel{\psi}{\cong} N_{\rho}(G,\cdot ),~\Phi(G,\cdot )\stackrel{\phi}{\cong}\Psi(G,\cdot ),~\Phi(G,\cdot )\stackrel{\varpi}{\cong}N_\mu(G,\cdot ),~\Psi(G,\cdot )\stackrel{\beta}{\cong}N_\mu(G,\cdot )$.
\end{myth}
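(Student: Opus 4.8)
The plan is to prove all four isomorphisms from a single structural observation: each regular bijection is completely pinned down by the one value it assigns to the identity $e$, and that value lies in the appropriate nucleus. So I would first convert ``regular'' into ``translation by a nuclear element,'' and then read the four correspondences off directly.

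First I would treat $\psi$. If $U\in\mathcal{P}(G,\cdot)$ then $(I,U,U)\in AUT(G,\cdot)$, i.e. $x\cdot yU=(xy)U$ for all $x,y$. Putting $y=e$ gives $xU=x\cdot eU$, so $U=R_{n}$ with $n:=eU$. Substituting back, the autotopism identity becomes $x\cdot(yn)=(xy)n$ for all $x,y$, which is exactly the statement $n\in N_\rho(G,\cdot)$; conversely every $R_n$ with $n\in N_\rho$ is $\rho$-regular. Hence $\psi:U\mapsto eU$ is a bijection onto $N_\rho$, and it is a homomorphism because the nucleus identity yields $R_{n_1}R_{n_2}=R_{n_1n_2}$, whence $\psi(U_1U_2)=e(U_1U_2)=(eU_1)U_2=n_1n_2=\psi(U_1)\psi(U_2)$.

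Next I would run the same determination for $\mu$-regular maps, which handles $\varpi$, $\phi$ and $\beta$ at once. If $U\in\Phi$ with adjoint $U'$, then $(U,U'^{-1},I)\in AUT$, i.e. $xU\cdot yU'^{-1}=xy$. Setting $x=e$ gives $m\cdot yU'^{-1}=y$ with $m:=eU$, forcing $U'^{-1}=L_m^{-1}$, so $U'=L_m$; then setting $y=m$ (so that $yU'^{-1}=m\backslash m=e$) forces $xU\cdot e=xm$, i.e. $U=R_m$. Re-substituting shows the autotopism identity is equivalent to $xm\cdot w=x\cdot mw$ for all $x,w$, i.e. $m\in N_\mu$. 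This simultaneously makes $\varpi:U\mapsto eU=m$ a bijection $\Phi\to N_\mu$, identifies $\Phi=\{R_m:m\in N_\mu\}$ and $\Psi=\{L_m:m\in N_\mu\}$ with $\phi:R_m\mapsto L_m$, and gives $\beta:L_m\mapsto eL_m=m$. As before $\varpi$ is a homomorphism via $R_{m_1}R_{m_2}=R_{m_1m_2}$; $\phi$ is a bijection since adjoints are unique; and since $\varpi=\beta\circ\phi$ with $\varpi$ an isomorphism, the remaining two maps are isomorphisms as soon as one of $\phi,\beta$ is shown to respect the group operation.

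The one genuinely delicate point is exactly this last bookkeeping for $\phi$ and $\beta$. Because the adjoint occupies the middle slot of an autotopism, the adjoint of a product reverses order, $(U_1U_2)'=U_2'U_1'$, and correspondingly the left translations obey $L_{m_1}L_{m_2}=L_{m_2m_1}$ (itself just the middle-nucleus identity). Thus the verification that $\phi$ and $\beta$ are structure-preserving hinges on keeping the composition convention consistent throughout and reconciling this order reversal with the stated $\cong$; I expect the translation identifications of the previous paragraphs to be routine, while this orientation bookkeeping is where the real care is needed.
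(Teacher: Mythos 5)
Your determination of regular maps as nuclear translations is the same mechanism the paper uses ($U=R_{eU}$ for $\rho$-regular $U$; $U=R_{eU}$, $U'=L_{eU}$ for $\mu$-regular $U$), and your treatment of $\psi$ and $\varpi$ is complete: both are bijective by that determination and multiplicative because $R_{n_1}R_{n_2}=R_{n_1n_2}$ for nuclear $n_1,n_2$. The gap is that you never prove anything about $\phi$ and $\beta$: you end by observing that adjoints reverse order, $(U_1U_2)'=U_2'U_1'$ and $L_{m_1}L_{m_2}=L_{m_2m_1}$, and then defer the rest as ``orientation bookkeeping.'' It is not bookkeeping; it is the entire content of two of the four claimed isomorphisms, and no consistency of conventions makes it disappear. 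With the paper's right-operator composition, the identities you derived say precisely that $\phi(UV)=\phi(V)\phi(U)$ and $\beta(L_{m_1}L_{m_2})=m_2m_1=\beta(L_{m_2})\beta(L_{m_1})$, i.e.\ $\phi$ and $\beta$ are \emph{anti}-homomorphisms; they are homomorphisms exactly when $N_\mu(G,\cdot)$ is abelian, which is not a hypothesis. To close the argument you must actually do something: for instance pass to $U\mapsto U'^{-1}$ (equivalently $R_m\mapsto L_{m^{-1}}$ and $L_m\mapsto m^{-1}$), which your own identity $L_{m_1}L_{m_2}=L_{m_2m_1}$ shows \emph{is} multiplicative, and then note that composing an anti-isomorphism with inversion yields an isomorphism, so that $\Phi\cong\Psi\cong N_\mu$ hold as claimed even though the maps $\phi,\beta$ as literally defined reverse products.

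To your credit, you have located exactly the step that the paper's own proof fudges: after computing $\phi(UV)=V'U'=\phi(V)\phi(U)$, the paper simply writes $\Rightarrow\phi(UV)=\phi(U)\phi(V)$ with no justification --- the same unproved commutation. So your diagnosis of where the difficulty sits is accurate, indeed sharper than the paper's treatment; but a proposal that stops at the diagnosis has not proved the theorem. As written, you establish $\mathcal{P}(G,\cdot)\cong N_\rho(G,\cdot)$ and $\Phi(G,\cdot)\cong N_\mu(G,\cdot)$, and leave $\Phi(G,\cdot)\cong\Psi(G,\cdot)$ and $\Psi(G,\cdot)\cong N_\mu(G,\cdot)$ unproven.
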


\paragraph{}
Interestingly, Adeniran \cite{phd79} and
Robinson \cite{25}, Chiboka and Solarin
\cite{phd80}, Bruck \cite{7}, Bruck and Paige \cite{phd40},
Robinson \cite{phd7}, Huthnance \cite{phd44} and  Adeniran \cite{phd79} have respectively studied the holomorphs of Bol loops,
conjugacy closed loops, inverse property loops,
A-loops, extra loops, weak inverse property loops and Bruck loops. A set of results on the holomorph of some varieties of loops can be found in Jaiyeola \cite{davidref:10}. The latest study on the holomorph of generalized Bol loops can be found in Adeniran et. al. \cite{ho1}.

In this present work, the notion of the holomorph of a generalized Bol loop (GBL) is characterized afresh. The holomorph of a right inverse property loop (RIPL) is shown to be a GBL if and only if the loop is a GBL and some bijections of the loop are right (middle) regular. The holomorph of a RIPL is shown to be a GBL if and only if the loop is a GBL and some elements of the loop are right (middle) nuclear. Necessary and sufficient condition for the holomorph of a RIPL to be a Bol loop are deduced. Some algebraic properties and commutative diagrams are established for a RIPL whose holomorph is a GBL.

\section{Main Results}
\begin{myth}\label{4.1}
Let $(Q,\cdot)$ be a RIPL with a self map $\sigma$ and let $(H,\circ )$ be the A-holomorph of $(Q,\cdot)$ with a self map $\sigma'$ such that $\sigma'~:~(\alpha,x)\mapsto (\alpha,\sigma(x))$ for all $(\alpha,x)\in H$. The A-holomorph $(H,\circ)$ of $(Q,\cdot)$ is a $\sigma'$-generalised
Bol loop if and only if $C=\Big(R_{x}^{-1}, L_xR_{[\sigma(x\gamma^{-1})]\alpha^{-1}},R_{[\sigma(x\gamma^{-1})]\alpha^{-1}}\Big)\in AUT(Q,\cdot)$
for all $x\in Q$ and all $\alpha,\gamma\in A(Q)$.
\end{myth}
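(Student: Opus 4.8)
The plan is to write out the defining $\sigma'$-generalised Bol identity for $(H,\circ)$, namely
$$\big((\xi\circ\eta)\circ\zeta\big)\circ\sigma'(\eta)=\xi\circ\big((\eta\circ\zeta)\circ\sigma'(\eta)\big),$$
for arbitrary $\xi=(\alpha,x_1)$, $\eta=(\beta,x_2)$, $\zeta=(\gamma,x_3)\in H$, and to expand both sides using $(\mu,u)\circ(\nu,v)=(\mu\nu,u\nu\cdot v)$ together with the fact that each element of $A(Q)$ is an automorphism, so it distributes over $\cdot$. First I would compute the two coordinates separately. The first (automorphism) coordinate of each side is the same product $\alpha\beta\gamma\beta$ in $A(Q)$, and since $\sigma'$ fixes the first coordinate, these coincide identically; hence the whole identity collapses to a single equation in the loop coordinate of $Q$. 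This is the step that converts the holomorph problem into a genuine identity inside $(Q,\cdot)$.

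Carrying out the expansion, the loop-coordinate equation takes the shape $[(u\cdot m)\cdot n]\cdot s=u\cdot[(m\cdot n)\cdot s]$, where $u,m,n$ are the images of $x_1,x_2,x_3$ under the surviving automorphisms and $s$ is the $\sigma$-value attached to the repeated argument $\eta$. Because each automorphism is a bijection of $Q$, the triple $(u,m,n)$ ranges over all of $Q^3$ as $(x_1,x_2,x_3)$ does; the delicate bookkeeping lies in tracking how $\alpha,\beta,\gamma$ act on the repeated $\eta$-term and, above all, on the argument of $\sigma$. After this substitution the equation is exactly of the form $(au\cdot b)\cdot s=a\cdot\big((u\cdot b)\cdot s\big)$ with $s$ the transported multiplier, which by the computation underlying Theorem \ref{1}(5) is equivalent to the assertion that $\big(R_{u}^{-1},\,L_{u}R_{s},\,R_{s}\big)\in AUT(Q,\cdot)$. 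Renaming $u$ as $x$ and reading off $s$ as the multiplier produced by pushing $\sigma(\cdot)$ through the relevant automorphisms yields $s=[\sigma(x\gamma^{-1})]\alpha^{-1}$, so the reduced condition is exactly $C\in AUT(Q,\cdot)$ for all $x\in Q$ and all $\alpha,\gamma\in A(Q)$. Here the RIP of $(Q,\cdot)$ is what lets me identify the first leg $R_x^{-1}$ with $R_{x^{\rho}}$ and invert translations without ambiguity.

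The converse direction simply reverses these steps: assuming $C\in AUT(Q,\cdot)$ for all admissible $x,\alpha,\gamma$, I would expand the autotopism equation $pR_x^{-1}\cdot qL_xR_s=(p\cdot q)R_s$, substitute $p=p'x$ (using the RIP) to recover the loop identity $[(u\cdot m)\cdot n]\cdot s=u\cdot[(m\cdot n)\cdot s]$, and then re-assemble the two coordinates to obtain the $\sigma'$-generalised Bol identity in $(H,\circ)$. I expect the main obstacle to be the second step of the forward direction: correctly propagating the single self-map $\sigma$ through the automorphism layers coming from $\xi,\eta,\zeta$ and the extra copy introduced by $\sigma'(\eta)$, so that the argument of $\sigma$ and the trailing automorphism land in precisely the positions displayed in $C$. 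Since $\sigma$ need not commute with any element of $A(Q)$, this tracking must be done literally, and it is the only place where the exact arrangement $[\sigma(x\gamma^{-1})]\alpha^{-1}$—rather than some other placement of $\sigma$, $\gamma$ and $\alpha$—is forced.
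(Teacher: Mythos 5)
Your overall plan (expand the $\sigma'$-Bol identity of $(H,\circ)$ coordinatewise, note the automorphism coordinates agree, and convert the surviving loop-coordinate equation into an autotopism statement) is reasonable, but the forward direction has a genuine gap exactly at the step you call ``delicate bookkeeping''. In the direct expansion of the identity, $\sigma'(\eta)=(\beta,\sigma(x_2))$ is the \emph{rightmost} factor on both sides, and in the holomorph product $(\mu,w)\circ(\beta,\sigma(x_2))=(\mu\beta,\,w\beta\cdot\sigma(x_2))$ its loop part $\sigma(x_2)$ is never acted on by any automorphism; moreover the automorphism $\alpha$ of $\xi$ drops out of the loop coordinate altogether. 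Carrying out your expansion literally gives, for all $x_1,x_2,x_3\in Q$ and $\beta,\gamma\in A(Q)$,
\begin{displaymath}
[(x_1\beta\gamma\beta\cdot x_2\gamma\beta)\cdot x_3\beta]\cdot\sigma(x_2)=x_1\beta\gamma\beta\cdot[(x_2\gamma\beta\cdot x_3\beta)\cdot\sigma(x_2)],
\end{displaymath}
i.e., writing $x:=x_2\gamma\beta$ and $\theta:=(\gamma\beta)^{-1}$, the condition $\big(R_x^{-1},L_xR_{\sigma(x\theta)},R_{\sigma(x\theta)}\big)\in AUT(Q,\cdot)$ for all $x\in Q$, $\theta\in A(Q)$. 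This is only the $\alpha=I$ subfamily of $C$: the multiplier carries one automorphism \emph{inside} the argument of $\sigma$ and nothing outside. Your assertion that the bookkeeping forces $s=[\sigma(x\gamma^{-1})]\alpha^{-1}$ is therefore incorrect for this route; since $\sigma$ is an arbitrary self map, the families $\{\sigma(x\theta)\}$ and $\{[\sigma(x\gamma^{-1})]\alpha^{-1}\}$ are genuinely different, so your argument terminates at a formally weaker statement than the theorem claims.

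The missing idea needed to close the gap is conjugation: $AUT(Q,\cdot)$ is stable under componentwise conjugation by any $\delta\in A(Q)$ (the same device as in Lemma~\ref{8.3}), and conjugation sends $\big(R_x^{-1},L_xR_s,R_s\big)$ to $\big(R_{x\delta}^{-1},L_{x\delta}R_{s\delta},R_{s\delta}\big)$; applying this with $\delta=\alpha^{-1}$ to your autotopisms, renaming $x\alpha^{-1}$ as $x$ and re-choosing the inner automorphism, yields exactly $C$, and conversely $C$ restricts to your family at $\alpha=I$. The paper avoids needing this extra lemma by expanding not the identity itself but the equivalent autotopism condition $\big(\mathbb{R}_{(\alpha,x)^{-1}},\mathbb{L}_{(\alpha,x)}\mathbb{R}_{\sigma'(\alpha,x)},\mathbb{R}_{\sigma'(\alpha,x)}\big)\in AUT(H,\circ)$: there the inverse element $(\alpha,x)^{-1}=(\alpha^{-1},(x\alpha^{-1})^{-1})$ injects $\alpha^{-1}$ into the second coordinate, and factoring the automorphism $\alpha$ out of the resulting equation (the step producing \eqref{eq:33}) is precisely what attaches $\alpha^{-1}$ to $\sigma(x)$. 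Either device repairs your proof; without one of them the forward direction does not reach $C$. Your sketch of the converse is fine as it stands, because the full family $C$ contains the $\alpha=I$ instances that reverse into the Bol identity on $(H,\circ)$.
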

\begin{proof}
Note that \begin{itemize}
\item $(H,\circ)$ is a RIPL if and only if $(Q,\cdot)$ is a RIPL.
\item $(Q,\cdot)$ is a $\sigma$-generalised Bol loop if and only if
$B=(R_{x}^{-1}, L_xR_{\sigma(x)}, R_{\sigma(x)})\in AUT(Q,\cdot)$ for all $x\in Q$.
\end{itemize}
Define $\sigma'~:H\to H$ as $\sigma'(\alpha,x)=(\alpha,\sigma(x))$. Let $(\alpha,x),(\beta,y),(\gamma,z)\in H$, then
$(H,\circ)$ is a $\sigma'$-generalised Bol loop if and only if
$(\mathbb{R}_{(\alpha,x)^{-1}}, \mathbb{L}_{(\alpha,x)}\mathbb{R}_{\sigma'(\alpha,x)}, \mathbb{R}_{\sigma'(\alpha,x)})\in AUT(H,\circ)$ for all $(\alpha,x)\in H$, i.e. $(\mathbb{R}_{(\alpha,x)^{-1}}, \mathbb{L}_{(\alpha,x)}\mathbb{R}_{(\alpha,\sigma (x))}, \mathbb{R}_{(\alpha,\sigma (x))})\in AUT(H,\circ )\Longleftrightarrow$
\begin{gather}
(\beta,y)\mathbb{R}_{(\alpha,x)^{-1}}\circ(\gamma,z)\mathbb{L}_{(\alpha,x)}\mathbb{R}_{(\alpha,\sigma (x))}
= [(\beta,y)
\circ(\gamma,z)]\mathbb{R}_{(\alpha,\sigma (x))}\nonumber\\
\Leftrightarrow[(\beta,y)\circ(\alpha,x)^{-1}]\circ [((\alpha,x)\circ(\gamma,z))\circ(\alpha,\sigma(x))]
=[(\beta,y)\circ(\gamma,z)]\circ
(\alpha,\sigma(x))\label{eq:25}
\end{gather}
Let $(\beta,y)\circ (\alpha,x)^{-1}=(\tau,t)$. Since $(\alpha,x)^{-1}=(\alpha^{-1},(x\alpha^{-1})^{-1})$, then
\begin{equation}\label{eq:26}
(\tau,t)=(\beta\alpha^{-1},(yx^{-1})\alpha^{-1})
\end{equation}
From \eqref{eq:25} and \eqref{eq:26},
\begin{gather}
(\tau,t)\circ[(\alpha\gamma,x\gamma\cdot
z)\circ(\alpha,\sigma(x))]=(\beta
\gamma,y\gamma\cdot z)\circ(\alpha,\sigma(x))\nonumber\\
\Leftrightarrow\Big(\tau\alpha\gamma\alpha,(t\alpha\gamma\alpha)\big((x\gamma\cdot
z)\alpha\cdot\sigma(x)\big)\Big)
=\big(\beta\gamma\alpha,(y\gamma\cdot z)\alpha\cdot\sigma(x)\big)\label{eq:28}
\end{gather}
Putting \eqref{eq:26} in \eqref{eq:28}, we have
\begin{gather}
\Big(\beta\alpha^{-1}\alpha\gamma\alpha,(yx^{-1})\alpha^{-1}(\alpha\gamma\alpha)\big((x\gamma\cdot
z)\alpha\cdot\sigma(x)\big)\Big) = \big(\beta\gamma\alpha,(y\gamma\cdot
z)\alpha\cdot\sigma(x)\big)\nonumber\\
\Leftrightarrow\Big(\beta\gamma\alpha, (yx^{-1})\gamma\alpha\big[(x\gamma\cdot z)\alpha\cdot\sigma(x)\big]\Big)
=\big(\beta\gamma\alpha,(y\gamma\cdot z)\alpha\cdot\sigma(x)\big)\nonumber\\
\Leftrightarrow(yx^{-1})\gamma\alpha\cdot [(x\gamma\cdot z)\alpha\cdot\sigma(x)]=(y\gamma\cdot z)\alpha\cdot\sigma(x)\nonumber\\
\Leftrightarrow \big[(yx^{-1})\gamma\cdot [(x\gamma\cdot z)\cdot
(\sigma(x)\alpha^{-1})]\big]\alpha=[(y\gamma\cdot z)
\cdot(\sigma(x)\alpha^{-1})]\alpha\nonumber\\
\Leftrightarrow (y\gamma x^{-1}\gamma )[(x\gamma\cdot z)\cdot (\sigma(x)\alpha^{-1})]=(y\gamma\cdot z)(\sigma(x)\alpha^{-1})\label{eq:33}
\end{gather}
Let $\bar{y}=y\gamma$, then \eqref{eq:33} becomes
\begin{gather}
(\bar{y}\cdot x^{-1}\gamma)[(x\gamma\cdot z)(\sigma(x)\alpha^{-1})]=(\bar{y}\cdot z)(\sigma(x)\alpha^{-1})\nonumber\\
\Leftrightarrow\Big(R_{x\gamma}^{-1}, L_{x\gamma}R_{[\sigma(x)\alpha^{-1}]},R_{[\sigma(x)\alpha^{-1}]}\Big)\in AUT(Q,\cdot)\nonumber\\
\textrm{and replacing $x\gamma$ by $x$,}~\Big(R_{x}^{-1}, L_xR_{[\sigma(x\gamma^{-1})]\alpha^{-1}},R_{[\sigma(x\gamma^{-1})]\alpha^{-1}}\Big)\in AUT(Q,\cdot)\nonumber .\qed
\end{gather}
\end{proof}

\begin{myth}\label{4.2}
Let $(Q,\cdot)$ be a RIPL with a self map $\sigma$
 and let $(H,\circ )$ be the A-holomorph of $(Q,\cdot)$ with a self map $\sigma'$ such that $\sigma'~:~(\alpha,x)\mapsto (\alpha,\sigma(x))$ for all $(\alpha,x)\in H$. $(H,\circ)$ is a $\sigma'$-generalised
Bol loop if and only if
\begin{enumerate}
  \item $(Q,\cdot)$ is a $\sigma$-GBL;
  \item $\Big(I,R_{\sigma (x)}^{-1}R_{\sigma(x\gamma^{-1})}, R_{\sigma (x)}^{-1}R_{\sigma(x\gamma^{-1})}\Big)\in AUT(Q,\cdot)$; and
  \item $\Big(I,R_{\sigma (x)}^{-1}R_{[\sigma(x)]\alpha^{-1}},R_{\sigma (x)}^{-1}R_{[\sigma(x)]\alpha^{-1}}\Big)\in AUT(Q,\cdot)$
\end{enumerate}
for all $x\in Q$ and $\alpha,\gamma\in A(Q)$.
\end{myth}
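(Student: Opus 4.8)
The plan is to build everything on Theorem \ref{4.1}, which already reduces the $\sigma'$-GBL property of $(H,\circ)$ to the single autotopism condition $C=\big(R_x^{-1},L_xR_{[\sigma(x\gamma^{-1})]\alpha^{-1}},R_{[\sigma(x\gamma^{-1})]\alpha^{-1}}\big)\in AUT(Q,\cdot)$ for all $x\in Q$ and all $\alpha,\gamma\in A(Q)$. The idea is to factor $C$ against the autotopism $B=(R_x^{-1},L_xR_{\sigma(x)},R_{\sigma(x)})$ that characterises condition (1) (recall from the proof of Theorem \ref{4.1} that $(Q,\cdot)$ being a $\sigma$-GBL is equivalent to $B\in AUT(Q,\cdot)$ for all $x$), and then to recognise conditions (2) and (3) as the two coordinate specialisations of the residual factor $B^{-1}C$.

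First I would compute $B^{-1}C$ in the group $AUT(Q,\cdot)$, using that autotopisms compose coordinatewise. Since $B^{-1}=(R_x,R_{\sigma(x)}^{-1}L_x^{-1},R_{\sigma(x)}^{-1})$, the first coordinate collapses to $I$ (as $R_xR_x^{-1}=I$) and the cancellation $L_x^{-1}L_x=I$ in the second coordinate yields
\[
B^{-1}C=\Big(I,\;R_{\sigma(x)}^{-1}R_{[\sigma(x\gamma^{-1})]\alpha^{-1}},\;R_{\sigma(x)}^{-1}R_{[\sigma(x\gamma^{-1})]\alpha^{-1}}\Big).
\]
Thus $B^{-1}C$ always has the shape $(I,U,U)$, i.e.\ it asserts $\rho$-regularity of $U=R_{\sigma(x)}^{-1}R_{[\sigma(x\gamma^{-1})]\alpha^{-1}}$ in the sense of the Definition, so it lands in $\mathcal{P}(Q,\cdot)$ whenever it is an autotopism. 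Conditions (2) and (3) are then exactly the specialisations $\alpha=I$ and $\gamma=I$ of this displayed triple, since the identity automorphism lies in $A(Q)$.

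For the forward direction I would assume $(H,\circ)$ is a $\sigma'$-GBL, so $C\in AUT(Q,\cdot)$ for all $x,\alpha,\gamma$ by Theorem \ref{4.1}. Setting $\alpha=\gamma=I$ gives $C=B$, hence (1); since $AUT(Q,\cdot)$ is a group, $B^{-1}C\in AUT(Q,\cdot)$, and its specialisations $\alpha=I$ and $\gamma=I$ give (2) and (3). For the converse I would assume (1)--(3) and show $C=B\cdot(B^{-1}C)\in AUT(Q,\cdot)$; as (1) gives $B\in AUT(Q,\cdot)$, it suffices to show the general residual factor $(I,U,U)$ is an autotopism for arbitrary $\alpha,\gamma$. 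Writing $a=\sigma(x)$ and $b=\sigma(x\gamma^{-1})$, I would telescope $R_a^{-1}R_{b\alpha^{-1}}=\big(R_a^{-1}R_b\big)\big(R_b^{-1}R_{b\alpha^{-1}}\big)$: the first bracket is the map of condition (2), while the second bracket equals $R_{\sigma(x')}^{-1}R_{[\sigma(x')]\alpha^{-1}}$ for $x'=x\gamma^{-1}$, which is the map of condition (3) applied at $x'$. Both corresponding triples are autotopisms, hence so is their product $(I,U,U)$, giving $C\in AUT(Q,\cdot)$, and Theorem \ref{4.1} finishes.

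The main obstacle is this reindexing in the converse: one must observe that $b=\sigma(x\gamma^{-1})$ is again a value of $\sigma$, so condition (3) can be re-applied at the shifted argument $x'=x\gamma^{-1}$ to absorb the factor $R_b^{-1}R_{b\alpha^{-1}}$. Everything else is group bookkeeping in $AUT(Q,\cdot)$ (equivalently in $\mathcal{P}(Q,\cdot)$, whose group structure is guaranteed by the Definition and Theorem \ref{sim}). I would take care to respect the right-action composition convention, so that the cancellations $L_x^{-1}L_x=I$ and $R_bR_b^{-1}=I$ occur in the order written and the telescoping is valid as stated.
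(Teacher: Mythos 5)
Your proposal is correct and follows essentially the same route as the paper: reduce to the single autotopism $C$ via Theorem~\ref{4.1}, extract (1) by setting $\alpha=\gamma=I$, form the residual $B^{-1}C=(I,U,U)$ and specialise to get (2) and (3), then recover $C$ in the converse as the product $B\,D(x)\,E(x\gamma^{-1})$ — your telescoping $R_{\sigma(x)}^{-1}R_{[\sigma(x\gamma^{-1})]\alpha^{-1}}=\big(R_{\sigma(x)}^{-1}R_{\sigma(x\gamma^{-1})}\big)\big(R_{\sigma(x\gamma^{-1})}^{-1}R_{[\sigma(x\gamma^{-1})]\alpha^{-1}}\big)$ with the shifted argument $x'=x\gamma^{-1}$ is exactly the paper's $BD(x)E(x\gamma^{-1})=C$. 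Your write-up merely makes explicit the computations the paper leaves to the reader.
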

\begin{proof}
From Theorem~\ref{4.1}, $(H,\circ)$ is a $\sigma'$-generalised
Bol loop if and only if
$$C=\Big(R_{x}^{-1}, L_xR_{[\sigma(x\gamma^{-1})]\alpha^{-1}},R_{[\sigma(x\gamma^{-1})]\alpha^{-1}}\Big)\in AUT(Q,\cdot)\Leftrightarrow \Big(R_{x}^{-1}, L_xR_{\sigma''(x)},R_{\sigma''(x)}\Big)\in AUT(Q,\cdot)$$
where $\sigma''(x)=[\sigma(x\gamma^{-1})]\alpha^{-1}$. Taking $\alpha =\gamma =I$ in $C$, then $\sigma''=\sigma$ which implies that $(Q,\cdot)$ is a $\sigma$-GBL and thus $B=(R_{x}^{-1}, L_xR_{\sigma(x)}, R_{\sigma(x)})\in AUT(Q,\cdot)$ for all $x\in Q$. So,
\begin{equation}\label{gbl1}
B^{-1}C=\Big(I,R_{\sigma (x)}^{-1}R_{[\sigma(x\gamma^{-1})]\alpha^{-1}}, R_{\sigma (x)}^{-1}R_{[\sigma(x\gamma^{-1})]\alpha^{-1}}\Big)\in AUT(Q,\cdot)
\end{equation}
Substitute $\alpha =I$ in \eqref{gbl1} to get
\begin{equation*}
    D(x)=\Big(I,R_{\sigma (x)}^{-1}R_{[\sigma(x\gamma^{-1})]}, R_{\sigma (x)}^{-1}R_{[\sigma(x\gamma^{-1})]}\Big)\in AUT(Q,\cdot)
\end{equation*} and also substitute $\gamma =I$ in \eqref{gbl1} to get
\begin{equation*}
E(x)=\Big(I,R_{\sigma (x)}^{-1}R_{[\sigma(x)]\alpha^{-1}}, R_{\sigma (x)}^{-1}R_{[\sigma(x)]\alpha^{-1}}\Big)\in AUT(Q,\cdot)
\end{equation*}
This proves the forward. The converse is achieved by computing and showing that $BD(x)E(x\gamma^{-1})=C$.\qed
 \end{proof}

\begin{myth}\label{5}
Let $(Q,\cdot)$ be a RIPL with a self map $\sigma$
 and let $(H,\circ )$ be the A-holomorph of $(Q,\cdot)$ with a self map $\sigma'$ such that $\sigma'~:~(\alpha,x)\mapsto (\alpha,\sigma(x))$ for all $(\alpha,x)\in H$. $(H,\circ)$ is a $\sigma'$-generalised
Bol loop if and only if
\begin{enumerate}
  \item $(Q,\cdot)$ is a $\sigma$-GBL; and
    \item $\sigma (x)^{-1}\sigma\big(x\gamma^{-1}\big),\sigma (x)^{-1}(\sigma(x))\alpha^{-1}\in N_\rho(Q,\cdot)$;
\end{enumerate}
for all $x,y\in Q$ and $\alpha,\gamma\in A(Q)$.
\end{myth}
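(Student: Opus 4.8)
The plan is to read the result directly off Theorem~\ref{4.2} by translating its two autotopism conditions into membership statements about the right nucleus, using the definition of a $\rho$-regular bijection together with Belousov's isomorphism (Theorem~\ref{sim}). Since condition (1) is common to both theorems, it suffices to prove that, for a $\sigma$-GBL $(Q,\cdot)$, conditions (2) and (3) of Theorem~\ref{4.2} are equivalent to condition (2) of the present statement. Writing $U_1=R_{\sigma(x)}^{-1}R_{\sigma(x\gamma^{-1})}$ and $U_2=R_{\sigma(x)}^{-1}R_{[\sigma(x)]\alpha^{-1}}$, conditions (2) and (3) of Theorem~\ref{4.2} say precisely that $(I,U_1,U_1),(I,U_2,U_2)\in AUT(Q,\cdot)$, i.e. that $U_1,U_2$ are $\rho$-regular, so $U_1,U_2\in\mathcal{P}(Q,\cdot)$.

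Next I would apply Belousov's isomorphism $\psi:\mathcal{P}(Q,\cdot)\stackrel{\cong}{\to}N_\rho(Q,\cdot)$, $\psi(U)=eU$, from Theorem~\ref{sim}; the only computation needed is to evaluate $eU_1$ and $eU_2$. Since a $\sigma$-GBL is an RIPL with two-sided inverses by Theorem~\ref{1}(1)--(2), we have $R_a^{-1}=R_{a^{-1}}$ for every $a$, whence $eU_1=(e\cdot\sigma(x)^{-1})\cdot\sigma(x\gamma^{-1})=\sigma(x)^{-1}\sigma(x\gamma^{-1})$ and likewise $eU_2=\sigma(x)^{-1}(\sigma(x))\alpha^{-1}$. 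This already settles the forward implication: if $(H,\circ)$ is a $\sigma'$-GBL, then $U_1,U_2\in\mathcal{P}(Q,\cdot)$, and applying $\psi$ places $\sigma(x)^{-1}\sigma(x\gamma^{-1})$ and $\sigma(x)^{-1}(\sigma(x))\alpha^{-1}$ in $N_\rho(Q,\cdot)$, which is condition (2).

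For the converse I would use the sharper form of the correspondence: setting $q=e$ in the identity $p\cdot qU=(pq)U$ that defines $(I,U,U)\in AUT(Q,\cdot)$ shows that every $\rho$-regular $U$ satisfies $U=R_{eU}$ with $eU\in N_\rho(Q,\cdot)$, while conversely $R_n$ is $\rho$-regular for each $n\in N_\rho(Q,\cdot)$. Thus, from $n_1:=\sigma(x)^{-1}\sigma(x\gamma^{-1})\in N_\rho(Q,\cdot)$ the map $R_{n_1}$ is automatically $\rho$-regular, and to recover condition (2) of Theorem~\ref{4.2} I must identify $U_1$ with $R_{n_1}$, i.e. verify $R_{\sigma(x)}^{-1}R_{\sigma(x\gamma^{-1})}=R_{\sigma(x)^{-1}\sigma(x\gamma^{-1})}$ (and the analogous equality for $U_2$). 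This reconstruction is where I expect the main obstacle to lie: rewriting the desired equality as $R_{\sigma(x\gamma^{-1})}=R_{\sigma(x)}R_{n_1}$ and applying $n_1\in N_\rho(Q,\cdot)$ once, it collapses to the cancellation $\sigma(x)\big(\sigma(x)^{-1}\sigma(x\gamma^{-1})\big)=\sigma(x\gamma^{-1})$, which is a left-inverse-type relation that an RIPL need not supply for free. The crux is therefore to extract this cancellation from the $\sigma$-GBL hypothesis and the nuclear membership rather than from an assumed LIP. Once that equality is secured, $U_1=R_{n_1}$ and $U_2=R_{n_2}$ are $\rho$-regular, conditions (2)--(3) of Theorem~\ref{4.2} hold, and Theorem~\ref{4.2} returns that $(H,\circ)$ is a $\sigma'$-generalised Bol loop.
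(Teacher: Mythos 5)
Your forward direction is complete and coincides with the paper's route: Theorem~\ref{4.2} gives $(I,U_1,U_1),(I,U_2,U_2)\in AUT(Q,\cdot)$, so $U_1,U_2\in\mathcal{P}(Q,\cdot)$, and Belousov's isomorphism $\psi$ of Theorem~\ref{sim} sends them to $eU_1=\sigma(x)^{-1}\sigma(x\gamma^{-1})$ and $eU_2=\sigma(x)^{-1}(\sigma(x))\alpha^{-1}$ in $N_\rho(Q,\cdot)$; your computation via $R_a^{-1}=R_{a^{-1}}$, justified by RIP and Theorem~\ref{1}(1)--(2), is correct. The genuine gap is in the converse, at exactly the step you flag and then leave open: you never establish the cancellation $\sigma(x)\big(\sigma(x)^{-1}\sigma(x\gamma^{-1})\big)=\sigma(x\gamma^{-1})$, i.e. $U_1=R_{n_1}$, and a proof cannot end with ``the crux is to extract this cancellation.'' What you missed is that no LIP-substitute from the GBL structure is needed: the cancellation is an immediate consequence of the nuclear hypothesis itself. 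Write $a=\sigma(x)$, $b=\sigma(x\gamma^{-1})$, $n_1=a^{-1}b\in N_\rho(Q,\cdot)$. Since $(Q,\cdot)$ is a $\sigma$-GBL, Theorem~\ref{1}(2) gives two-sided inverses, so $a^{-1}a=e$; the right-nucleus property $zy\cdot n_1=z\cdot yn_1$ with $z=a^{-1}$, $y=a$ then yields $n_1=(a^{-1}a)n_1=a^{-1}(an_1)$. Comparing with $n_1=a^{-1}b$ and cancelling the bijection $L_{a^{-1}}$ gives $an_1=b$. The same nuclear property, read as $(ya)n_1=y(an_1)$ for all $y$, says $R_b=R_{an_1}=R_aR_{n_1}$, whence $U_1=R_a^{-1}R_b=R_{n_1}\in\mathcal{P}(Q,\cdot)$; identically $U_2=R_{n_2}$ with $n_2=\sigma(x)^{-1}(\sigma(x))\alpha^{-1}$. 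This restores conditions (2)--(3) of Theorem~\ref{4.2} and completes the converse.

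For comparison, the paper's own proof of this theorem is a one-line appeal to Theorem~\ref{4.2} ``using the autotopisms $D(x)$ and $E(x)$,'' with the translation into nuclear membership spelled out only later (Corollary~\ref{9}) through the isomorphism $\mathcal{P}(Q,\cdot)\cong N_\rho(Q,\cdot)$; the paper passes silently over the very point you isolated, namely that $eU\in N_\rho(Q,\cdot)$ forces $U\in\mathcal{P}(Q,\cdot)$ only once one knows $U=R_{eU}$. So your skeleton is the paper's argument made explicit, and it becomes a complete --- indeed more careful --- proof once the short nuclear-cancellation computation above is inserted.
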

\begin{proof}
This is achieved by Theorem~\ref{4.2} by using the autotopisms $D(x)$ and $E(x)$.\qed
\end{proof}

\begin{mylem}\label{6}
Let $(Q,\cdot)$ be a RIPL with a bijective self map $\sigma$ and let $(H,\circ )$ be the holomorph of $(Q,\cdot)$ with a self map $\sigma'$ such that $\sigma'~:~(\alpha,x)\mapsto (\alpha,\sigma(x))$ for all $(\alpha,x)\in H$. If $(H,\circ)$ is a $\sigma'$-GBL, then $A(Q)=\{\sigma R_{n_1}\sigma^{-1},R_{n_2}^{-1}|n_1,n_2\in N_\rho(Q,\cdot)\}$.
\end{mylem}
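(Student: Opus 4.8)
The plan is to read off the two stated forms for an arbitrary automorphism directly from the nuclear conditions supplied by Theorem~\ref{5}, exploiting that $\sigma$ is now assumed bijective. Since $(H,\circ)$ is a $\sigma'$-GBL, Theorem~\ref{5} gives that $(Q,\cdot)$ is a $\sigma$-GBL and that $\sigma(x)^{-1}\sigma(x\gamma^{-1})\in N_\rho(Q,\cdot)$ and $\sigma(x)^{-1}\big(\sigma(x)\alpha^{-1}\big)\in N_\rho(Q,\cdot)$ for all $x\in Q$ and all $\alpha,\gamma\in A(Q)$. I would fix an arbitrary $\theta\in A(Q)$ and feed it into these two memberships, once in the role of $\gamma$ (taking $\alpha=I$) and once in the role of $\alpha$ (taking $\gamma=I$).

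First I would treat the ``$\gamma$'' relation. Writing $n_1=\sigma(x)^{-1}\sigma(x\theta^{-1})\in N_\rho(Q,\cdot)$ gives $\sigma(x\theta^{-1})=\sigma(x)\cdot n_1$; applying the inverse $\sigma^{-1}$ (which exists precisely because $\sigma$ is bijective) yields $x\theta^{-1}=\sigma^{-1}\big(\sigma(x)\,n_1\big)$, that is, $\theta^{-1}=\sigma R_{n_1}\sigma^{-1}$ as maps on $Q$. Inverting and using $R_{n_1}^{-1}=R_{n_1^{-1}}$ with $n_1^{-1}\in N_\rho(Q,\cdot)$ (a subgroup) places $\theta=\sigma R_{n_1^{-1}}\sigma^{-1}$ in the first family. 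Next I would treat the ``$\alpha$'' relation: with $w=\sigma(x)$, which ranges over all of $Q$ as $x$ does, the membership $w^{-1}(w\theta^{-1})=n_2\in N_\rho(Q,\cdot)$ gives $w\theta^{-1}=w\cdot n_2=wR_{n_2}$, hence $\theta^{-1}=R_{n_2}$ and $\theta=R_{n_2}^{-1}$, the second family. The reverse containment I would obtain by reversing these two computations, so that $A(Q)$ and the displayed set coincide.

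The delicate point, and the step I expect to be the real obstacle, is the passage from the \emph{pointwise} equalities $\sigma(x\theta^{-1})=\sigma(x)n_1$ and $w\theta^{-1}=wn_2$ to genuine map identities $\theta^{-1}=\sigma R_{n_1}\sigma^{-1}$ and $\theta^{-1}=R_{n_2}$: a priori $n_1=n_1(x)$ and $n_2=n_2(w)$ depend on the base point, whereas the right-hand sides are single translations, so independence of the base point must be justified. To force this I would use the automorphism conjugation identity $R_{w\theta^{-1}}=\theta R_w\theta^{-1}$, which together with $R_{w\theta^{-1}}=R_wR_{n_2(w)}$ yields $R_{n_2(w)}=R_w^{-1}\,\theta R_w\theta^{-1}\in\mathcal{P}(Q,\cdot)$. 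Belousov's isomorphism $\mathcal{P}(Q,\cdot)\stackrel{\psi}{\cong}N_\rho(Q,\cdot)$ of Theorem~\ref{sim} identifies each $\rho$-regular map with a unique nuclear element through evaluation at $e$, so controlling this commutator-type expression as $w$ varies is exactly what would pin $n_2$ (and, by the analogous computation carried through $\sigma$, $n_1$) down to a constant. This normalisation argument, rather than the formal rearrangement, is the crux of the lemma.
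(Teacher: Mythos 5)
Your first two paragraphs are, almost verbatim, the paper's entire proof: the paper likewise invokes Theorem~\ref{5}, rewrites the two nuclear memberships as $\sigma(x\gamma^{-1})=\sigma(x)n_1$ and $(\sigma(x))\alpha^{-1}=\sigma(x)n_2$, and concludes at once that $\gamma=\sigma R_{n_1}\sigma^{-1}$ and $\alpha=R_{n_2}^{-1}$. It does not discuss the reverse containment either, and neither, in substance, do you: reversing the computation would show only that a map of the stated form satisfies the nuclear conditions, not that it is an automorphism. So as a reconstruction of what the paper actually does, your proposal matches it, up to the harmless replacement of $n_1$ by $n_1^{-1}$.

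The real issue is the step you yourself flag as the crux and then leave unfinished: passing from the pointwise equalities, with $n_1=n_1(x)$ and $n_2=n_2(w)$, to map identities with constant $n_1,n_2$. The paper performs this step silently, with no justification, and your sketched normalization cannot close it, because the constancy claim (and with it the lemma as literally stated) is false. Take $Q=\mathbb{Z}_3$, $\sigma=I$, and $A(Q)=AUM(Q,\cdot)$. Then $(H,\circ)$ is the holomorph of the group $\mathbb{Z}_3$, which is itself a group (isomorphic to $S_3$), hence a Bol loop, i.e.\ a $\sigma'$-GBL with $\sigma'=I$; and the conditions of Theorem~\ref{5} hold trivially since $N_\rho(\mathbb{Z}_3)=\mathbb{Z}_3$. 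Yet the inversion automorphism $\theta:x\mapsto -x$ lies in $A(Q)$ and equals no single $R_n$ or $R_n^{-1}$, since no translation other than the identity is an automorphism; your own identity $R_{n_2(w)}=R_w^{-1}\theta R_w\theta^{-1}$ evaluates here to $R_{n_2(w)}=R_w$, exhibiting $n_2(w)=w$ as genuinely base-point dependent rather than constant. (The reverse inclusion fails in the same example: $R_1\notin AUM(\mathbb{Z}_3)$.) So your instinct about where the difficulty lies is exactly right, but it is not a difficulty any argument can overcome: what your computation, and equally the paper's, actually establishes is only that each $\gamma\in A(Q)$ agrees at every point $x$ with a map of the stated form, the nuclear element being allowed to vary with $x$.
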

\begin{proof}
Using Theorem~\ref{5}:
\begin{gather*}
\sigma (x)\cdot\sigma (x)^{-1}\sigma\big(x\gamma^{-1}\big),\sigma (x)\cdot\sigma (x)^{-1}(\sigma(x))\alpha^{-1}\in \sigma (x)N_\rho(Q,\cdot)\\
\Rightarrow\sigma\big(x\gamma^{-1}\big)=\sigma(x)n_1~\textrm{and}~(\sigma(x))\alpha^{-1}=\sigma(x)n_2~\textrm{for some}~n_1,n_2\in N_\rho(Q,\cdot)\\
\Rightarrow \gamma=\sigma R_{n_1}\sigma^{-1}~\textrm{and}~\alpha=R_{n_2}^{-1}~\textrm{for some}~n_1,n_2\in N_\rho(Q,\cdot).\qed
\end{gather*}
\end{proof}

\begin{myth}\label{7}
Let $(Q,\cdot)$ be a RIPL with a self map $\sigma$
 and let $(H,\circ )$ be the A-holomorph of $(Q,\cdot)$ with a self map $\sigma'$ such that $\sigma'~:~(\alpha,x)\mapsto (\alpha,\sigma(x))$ for all $(\alpha,x)\in H$. $(H,\circ)$ is a $\sigma'$-generalised
Bol loop if and only if
\begin{enumerate}
  \item $(Q,\cdot)$ is a $\sigma$-GBL;
  \item $\Big(R_{\sigma (x)}^{-1}R_{\sigma(x\gamma^{-1})}, \big(JR_{\sigma(x\gamma^{-1})}^{-1}R_{\sigma (x)}J\big)^{-1},I\Big)\in AUT(Q,\cdot)$; and
  \item $\Big(R_{\sigma (x)}^{-1}R_{[\sigma(x)]\alpha^{-1}},\big(JR_{[\sigma(x)]\alpha^{-1}}^{-1}R_{\sigma (x)}J\big)^{-1},I\Big)\in AUT(Q,\cdot)$
\end{enumerate}
for all $x\in Q$ and $\alpha,\gamma\in A(Q)$.
\end{myth}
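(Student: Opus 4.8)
The plan is to derive this from Theorem~\ref{4.2} rather than re-run the computation of Theorem~\ref{4.1}. Condition (1) here is literally condition (1) of Theorem~\ref{4.2}, so it suffices to show that conditions (2) and (3) of the two theorems match. Now both conditions (2) and (3) of Theorem~\ref{4.2} assert that a bijection is $\rho$-regular, i.e. they have the form $(I,U,U)\in AUT(Q,\cdot)$, whereas conditions (2) and (3) here have the form $(U,V,I)\in AUT(Q,\cdot)$, i.e. they assert $\mu$-regularity of the \emph{same} first-coordinate bijection $U$. So the whole theorem collapses to a single structural fact about RIPLs: for any bijection $U$ of a RIPL $(Q,\cdot)$,
\[
(I,U,U)\in AUT(Q,\cdot)\iff \big(U,\,JUJ,\,I\big)\in AUT(Q,\cdot),
\]
where $J$ is the inverse map, which by the RIP is an involution with $R_x^{-1}=R_{xJ}$.

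First I would prove this lemma; I expect it to be the only genuine content. Writing $\big(U,JUJ,I\big)\in AUT(Q,\cdot)$ coordinatewise and using $y(JUJ)=\big((yJ)U\big)J$, the condition reads $xU\cdot\big((yJ)U\big)J = x\cdot y$ for all $x,y\in Q$. Right-translating both sides by $(yJ)U$ and invoking the RIP-companion identity $(ab^{\rho})b=a$ (with $a=xU$, so the left side collapses to $xU$) reduces this to the equivalent statement $xU=(x\cdot y)\big((yJ)U\big)$. Finally substitute $s=x\cdot y$ and $t=yJ$, noting $s\cdot t=(xy)y^{\rho}=x$ by the RIP; the identity then becomes $s\cdot tU=(s\cdot t)U$, which is exactly the coordinate form of $(I,U,U)\in AUT(Q,\cdot)$. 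Every manoeuvre is a right-translation or an involution, hence reversible, so the chain is a true equivalence and the lemma holds in both directions.

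With the lemma the theorem is immediate. Applying it to $U=R_{\sigma(x)}^{-1}R_{\sigma(x\gamma^{-1})}$ turns condition (2) of Theorem~\ref{4.2} into $\big(U,JUJ,I\big)\in AUT(Q,\cdot)$; since $U^{-1}=R_{\sigma(x\gamma^{-1})}^{-1}R_{\sigma(x)}$ and $J^2=I$, one has $JUJ=\big(JU^{-1}J\big)^{-1}=\big(JR_{\sigma(x\gamma^{-1})}^{-1}R_{\sigma(x)}J\big)^{-1}$, which is precisely the second coordinate in condition (2) here. The identical computation with $U=R_{\sigma(x)}^{-1}R_{[\sigma(x)]\alpha^{-1}}$ converts condition (3) of Theorem~\ref{4.2} into condition (3) here, and because the lemma is an equivalence the converse needs no extra argument. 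The main obstacle is nothing more than the bookkeeping inside the lemma: keeping the two RIP cancellations $(ab)b^{\rho}=a$ and $(ab^{\rho})b=a$ straight and verifying that each reduction is reversible, with no loop-theoretic input beyond the RIP.
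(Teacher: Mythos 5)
Your proposal is correct and takes essentially the same route as the paper: both derive the theorem from Theorem~\ref{4.2} by converting each $\rho$-regular autotopism $(I,U,U)$ into the $\mu$-regular form $\big(U,JUJ,I\big)=\big(U,(JU^{-1}J)^{-1},I\big)$ via the right inverse property. The only difference is that the paper merely cites the known RIPL fact $(U,V,W)\in AUT(Q,\cdot)\Rightarrow (W,JVJ,U)\in AUT(Q,\cdot)$, whereas you prove the required special case (as an equivalence) from scratch.
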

\begin{proof}
This is achieved with Theorem~\ref{4.2} by using the fact that in a RIPL, $(U,V,W)\in AUT(Q,\cdot)\Rightarrow (W,JVJ,U)\in AUT(Q,\cdot)$.\qed
\end{proof}

\begin{myth}\label{7.1}
Let $(Q,\cdot)$ be a RIPL with a self map $\sigma$
 and let $(H,\circ )$ be the A-holomorph of $(Q,\cdot)$ with a self map $\sigma'$ such that $\sigma'~:~(\alpha,x)\mapsto (\alpha,\sigma(x))$ for all $(\alpha,x)\in H$. The following are equivalent
 \begin{enumerate}
 \item $(H,\circ)$ is a $\sigma'$-GBL.
\item
\begin{enumerate}
   \item $(Q,\cdot)$ is a $\sigma$-GBL;
  \item $R_{\sigma (x)}^{-1}R_{\sigma(x\gamma^{-1})}$ and $R_{\sigma (x)}^{-1}R_{[\sigma(x)]\alpha^{-1}}$ are $\rho$-regular for all $x\in Q$ and $\alpha,\gamma\in A(Q)$.
\end{enumerate}
\item
\begin{enumerate}
 \item $(Q,\cdot)$ is a $\sigma$-GBL;
  \item $R_{\sigma (x)}^{-1}R_{\sigma(x\gamma^{-1})}$ and $R_{\sigma (x)}^{-1}R_{[\sigma(x)]\alpha^{-1}}$ are $\mu$-regular with adjoints $JR_{\sigma(x\gamma^{-1})}^{-1}R_{\sigma (x)}J$ and $JR_{[\sigma(x)]\alpha^{-1}}^{-1}R_{\sigma (x)}J$ respectively, for all $x\in Q$ and $\alpha,\gamma\in A(Q)$.
\end{enumerate}
 \end{enumerate}
\end{myth}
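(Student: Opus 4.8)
The plan is to recognise that Theorem~\ref{7.1} is purely a repackaging of the autotopism criteria already proved in Theorems~\ref{4.2} and~\ref{7}, now phrased in the regularity language of the opening definition. No fresh loop computation is required; the entire content is matching the \emph{shapes} of the autotopisms against the definitions of $\rho$-regular and $\mu$-regular maps. Since both of those theorems are ``if and only if'' statements, each translation automatically yields an equivalence in both directions, and chaining them through statement~(1) gives the three-way equivalence.

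First I would establish $(1)\Leftrightarrow(2)$ using Theorem~\ref{4.2}. That theorem asserts that $(H,\circ)$ is a $\sigma'$-GBL precisely when $(Q,\cdot)$ is a $\sigma$-GBL and the two triples $\big(I,R_{\sigma(x)}^{-1}R_{\sigma(x\gamma^{-1})},R_{\sigma(x)}^{-1}R_{\sigma(x\gamma^{-1})}\big)$ and $\big(I,R_{\sigma(x)}^{-1}R_{[\sigma(x)]\alpha^{-1}},R_{\sigma(x)}^{-1}R_{[\sigma(x)]\alpha^{-1}}\big)$ lie in $AUT(Q,\cdot)$. Each has the form $(I,U,U)$, which by definition is exactly the assertion that $U$ is $\rho$-regular. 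Hence conditions (2) and (3) of Theorem~\ref{4.2} are verbatim the statement that $R_{\sigma(x)}^{-1}R_{\sigma(x\gamma^{-1})}$ and $R_{\sigma(x)}^{-1}R_{[\sigma(x)]\alpha^{-1}}$ are $\rho$-regular, which is item~(2)(b). This delivers $(1)\Leftrightarrow(2)$ immediately.

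Next I would establish $(1)\Leftrightarrow(3)$ using Theorem~\ref{7}, whose two nontrivial autotopisms are $\big(R_{\sigma(x)}^{-1}R_{\sigma(x\gamma^{-1})},(JR_{\sigma(x\gamma^{-1})}^{-1}R_{\sigma(x)}J)^{-1},I\big)$ and $\big(R_{\sigma(x)}^{-1}R_{[\sigma(x)]\alpha^{-1}},(JR_{[\sigma(x)]\alpha^{-1}}^{-1}R_{\sigma(x)}J)^{-1},I\big)$. Each has the form $(U,W,I)$, and by definition $U$ is $\mu$-regular with adjoint $U'$ exactly when $(U,U'^{-1},I)\in AUT(Q,\cdot)$. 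Matching $W=U'^{-1}$ and inverting gives $U'=W^{-1}$; for the first triple this reads $U'=JR_{\sigma(x\gamma^{-1})}^{-1}R_{\sigma(x)}J$, and for the second $U'=JR_{[\sigma(x)]\alpha^{-1}}^{-1}R_{\sigma(x)}J$. These are precisely the adjoints named in item~(3)(b), so Theorem~\ref{7} translates verbatim into statement~(3), yielding $(1)\Leftrightarrow(3)$.

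The only point requiring care---a very mild ``main obstacle''---is the adjoint bookkeeping in the second step: the middle slots in Theorem~\ref{7} already carry an outer $(\cdots)^{-1}$, so one must invert once more to read off the adjoint $U'$ itself and check that this cancels the outer exponent, leaving $JR_{\sigma(x\gamma^{-1})}^{-1}R_{\sigma(x)}J$ (and its $\alpha$-analogue) rather than its inverse. With the regularity definitions aligned to Theorems~\ref{4.2} and~\ref{7} in this way, both equivalences hold in each direction and the proof is complete.
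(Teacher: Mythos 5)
Your proposal is correct and is exactly the paper's route: the paper's proof is the single line ``Use Theorem~\ref{4.2} and Theorem~\ref{7}'', i.e.\ precisely the translation you carry out, matching the triples $(I,U,U)$ against the definition of $\rho$-regularity and $(U,U'^{-1},I)$ against $\mu$-regularity with adjoint $U'$. Your extra care with the inverse in the middle slot of Theorem~\ref{7}'s autotopisms is the right (and only) bookkeeping point, and you resolve it correctly.
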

\begin{proof}
Use Theorem~\ref{4.2} and Theorem~\ref{7}.\qed
\end{proof}

\begin{mycor}\label{8}
Let $(Q,\cdot)$ be a RIPL with a self map $\sigma$
 and let $(H,\circ )$ be the A-holomorph of $(Q,\cdot)$ with a self map $\sigma'$ such that $\sigma'~:~(\alpha,x)\mapsto (\alpha,\sigma(x))$ for all $(\alpha,x)\in H$. The following are equivalent
 \begin{enumerate}
 \item $(H,\circ)$ is a $\sigma'$-GBL.
\item
\begin{enumerate}
   \item $(Q,\cdot)$ is a $\sigma$-GBL;
  \item $R_{\sigma (x)}^{-1}R_{\sigma(x\gamma^{-1})},R_{\sigma (x)}^{-1}R_{[\sigma(x)]\alpha^{-1}}\in \mathcal{P}(Q,\cdot)$ for all $x\in Q$ and $\alpha,\gamma\in A(Q)$.
\end{enumerate}
\item
\begin{enumerate}
 \item $(Q,\cdot)$ is a $\sigma$-GBL;
  \item $R_{\sigma (x)}^{-1}R_{\sigma(x\gamma^{-1})},R_{\sigma (x)}^{-1}R_{[\sigma(x)]\alpha^{-1}}\in \Phi (Q,\cdot)$ and  $JR_{\sigma(x\gamma^{-1})}^{-1}R_{\sigma (x)}J,JR_{[\sigma(x)]\alpha^{-1}}^{-1}R_{\sigma (x)}J\in\Psi(Q,\cdot)$ for all $x\in Q$ and $\alpha,\gamma\in A(Q)$.
\end{enumerate}
 \end{enumerate}
\end{mycor}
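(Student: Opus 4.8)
The plan is to treat Corollary~\ref{8} as a purely terminological recasting of Theorem~\ref{7.1}: I would not reprove anything about the holomorph, but instead translate the two \emph{regularity} conditions appearing in Theorem~\ref{7.1} into the group-membership statements $\mathcal{P}(Q,\cdot)$, $\Phi(Q,\cdot)$, $\Psi(Q,\cdot)$ using Definition~1.1. Since Theorem~\ref{7.1} already gives the logical equivalence of statement~(1) with each of statements~(2) and~(3) phrased via $\rho$-regularity and $\mu$-regularity, it suffices to observe that ``$\rho$-regular'' and ``$\mu$-regular with a prescribed adjoint'' are, by definition, exactly the assertions of membership in the relevant groups. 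Thus the whole corollary reduces to reading off definitions.

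For the $\rho$-regular part I would argue as follows. By Definition~1.1(2), a bijection $U$ is $\rho$-regular precisely when $(I,U,U)\in AUT(Q,\cdot)$, and the set of all such $U$ is the group $\mathcal{P}(Q,\cdot)$; hence the phrase ``$U$ is $\rho$-regular'' and the phrase ``$U\in\mathcal{P}(Q,\cdot)$'' are literally synonymous. Applying this to the two bijections $R_{\sigma (x)}^{-1}R_{\sigma(x\gamma^{-1})}$ and $R_{\sigma (x)}^{-1}R_{[\sigma(x)]\alpha^{-1}}$, condition~(2b) of Theorem~\ref{7.1} becomes condition~(2b) of the corollary verbatim, while condition~(2a)/(3a) (that $(Q,\cdot)$ is a $\sigma$-GBL) is common to all three and is carried over unchanged. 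This gives the equivalence of statements~(1) and~(2).

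For the $\mu$-regular part I would invoke Definition~1.1(3): a bijection $U$ is $\mu$-regular iff there is an adjoint $U'$ with $(U,U'^{-1},I)\in AUT(Q,\cdot)$, the $\mu$-regular maps forming $\Phi(Q,\cdot)$ and their adjoints forming $\Psi(Q,\cdot)$. Here I would match bookkeeping: taking $U=R_{\sigma (x)}^{-1}R_{\sigma(x\gamma^{-1})}$ and the stated adjoint $U'=JR_{\sigma(x\gamma^{-1})}^{-1}R_{\sigma (x)}J$, the defining triple $(U,U'^{-1},I)$ coincides with the autotopism furnished by Theorem~\ref{7.1}(3), so $U\in\Phi(Q,\cdot)$ and $U'\in\Psi(Q,\cdot)$; the same computation applies to $R_{\sigma (x)}^{-1}R_{[\sigma(x)]\alpha^{-1}}$ with adjoint $JR_{[\sigma(x)]\alpha^{-1}}^{-1}R_{\sigma (x)}J$. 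This turns condition~(3b) of Theorem~\ref{7.1} into condition~(3b) of the corollary and completes the equivalence of~(1) and~(3).

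The only point requiring a little care---and hence the ``hard part,'' though it is genuinely routine---is the inversion convention: Theorem~\ref{7.1} records the second slot of the autotopism already inverted, whereas Definition~1.1(3) writes it as $U'^{-1}$, so I must confirm that the maps named as adjoints in the corollary are precisely the $U'$ for which $(U,U'^{-1},I)\in AUT(Q,\cdot)$, and that they therefore genuinely populate $\Psi(Q,\cdot)$ rather than merely some conjugate set. Once this identification is checked, no further computation is needed, since the group structure of $\mathcal{P}(Q,\cdot)$, $\Phi(Q,\cdot)$ and $\Psi(Q,\cdot)$ guarantees that the two displayed bijections (resp.\ their adjoints) lie in the stated groups exactly when each individually satisfies the defining autotopism condition.
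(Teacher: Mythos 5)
Your proposal is correct and matches the paper's own proof, which is literally the single line ``Use Theorem~\ref{7.1}'': the corollary is nothing but Theorem~\ref{7.1} with ``$\rho$-regular'' and ``$\mu$-regular with the stated adjoint'' rewritten as membership in $\mathcal{P}(Q,\cdot)$, $\Phi(Q,\cdot)$ and $\Psi(Q,\cdot)$ via Definition~1.1. Your extra care about the inversion convention (that the second slot in Theorem~\ref{7}'s autotopism is already $U'^{-1}$, so the named maps are genuinely the adjoints populating $\Psi(Q,\cdot)$) is exactly the right bookkeeping and is implicit in the paper's citation.
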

\begin{proof}
Use Theorem~\ref{7.1}.\qed
\end{proof}

\begin{mycor}\label{0.9}
Let $(Q,\cdot)$ be a RIPL with a self map $\sigma$
 and let $(H,\circ )$ be the A-holomorph of $(Q,\cdot)$ with a self map $\sigma'$ such that $\sigma'~:~(\alpha,x)\mapsto (\alpha,\sigma(x))$ for all $(\alpha,x)\in H$. The following are equivalent
 \begin{enumerate}
 \item $(H,\circ)$ is a $\sigma'$-GBL.
\item
\begin{enumerate}
   \item $(Q,\cdot)$ is a $\sigma$-GBL;
  \item $R_{\sigma(x\gamma^{-1})},R_{[\sigma(x)]\alpha^{-1}}\in R_{\sigma (x)}\mathcal{P}(Q,\cdot)$ for all $x\in Q$ and $\alpha,\gamma\in A(Q)$.
\end{enumerate}
\item
\begin{enumerate}
 \item $(Q,\cdot)$ is a $\sigma$-GBL;
  \item $R_{\sigma(x\gamma^{-1})},R_{[\sigma(x)]\alpha^{-1}}\in R_{\sigma (x)}\Phi (Q,\cdot)$ and  $R_{\sigma (x)}J\in R_{\sigma(x\gamma^{-1})}J\Psi(Q,\cdot),R_{\sigma (x)}J\in R_{[\sigma(x)]\alpha^{-1}}J\Psi(Q,\cdot)$ for all $x\in Q$ and $\alpha,\gamma\in A(Q)$.
\end{enumerate}
 \end{enumerate}
\end{mycor}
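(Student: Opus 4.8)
The plan is to obtain this corollary as a direct translation of Corollary~\ref{8}, simply rephrasing the membership conditions on the \emph{products} of translations into membership conditions on the individual translations $R_{\sigma(x\gamma^{-1})}$ and $R_{[\sigma(x)]\alpha^{-1}}$ relative to the coset of $R_{\sigma(x)}$. Since item~1 (that $(Q,\cdot)$ is a $\sigma$-GBL) is common to all three formulations and carries over verbatim, the entire content of the argument lies in manipulating the conditions in item~2(b) of each equivalence.

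First I would handle the $\rho$-regular / $\mathcal{P}(Q,\cdot)$ formulation. Corollary~\ref{8} asserts that $R_{\sigma(x)}^{-1}R_{\sigma(x\gamma^{-1})}\in\mathcal{P}(Q,\cdot)$ and $R_{\sigma(x)}^{-1}R_{[\sigma(x)]\alpha^{-1}}\in\mathcal{P}(Q,\cdot)$. Because $\mathcal{P}(Q,\cdot)$ is a group (Definition~1.1), left-multiplying by the bijection $R_{\sigma(x)}$ gives the equivalent statements $R_{\sigma(x\gamma^{-1})}\in R_{\sigma(x)}\mathcal{P}(Q,\cdot)$ and $R_{[\sigma(x)]\alpha^{-1}}\in R_{\sigma(x)}\mathcal{P}(Q,\cdot)$, which is exactly item~2(b) here. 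This step is a pure coset rewriting and should be immediate.

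Next I would treat the $\mu$-regular / $\Phi$-$\Psi$ formulation. The membership $R_{\sigma(x)}^{-1}R_{\sigma(x\gamma^{-1})}\in\Phi(Q,\cdot)$ becomes, after left multiplication by $R_{\sigma(x)}$ and using that $\Phi(Q,\cdot)$ is a group, the coset condition $R_{\sigma(x\gamma^{-1})}\in R_{\sigma(x)}\Phi(Q,\cdot)$, and similarly for the $\alpha$-version. For the adjoint conditions I would rewrite $JR_{\sigma(x\gamma^{-1})}^{-1}R_{\sigma(x)}J\in\Psi(Q,\cdot)$: since $\Psi(Q,\cdot)$ is a group, this is equivalent to its inverse lying in $\Psi(Q,\cdot)$, and then inserting the appropriate factor of $R_{\sigma(x)}J$ on the left (again using the group property) converts it into the stated coset membership $R_{\sigma(x)}J\in R_{\sigma(x\gamma^{-1})}J\,\Psi(Q,\cdot)$, with the analogous manipulation for the $[\sigma(x)]\alpha^{-1}$ term. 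The only point requiring care is tracking how the conjugating factors of $J$ and the inverses interact with the group $\Psi(Q,\cdot)$, so that the adjoint expressions from Corollary~\ref{8} line up exactly with the coset form claimed in item~2(b) above.

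The main obstacle, such as it is, will be bookkeeping in this last step: one must verify that the particular bijections $JR_{\sigma(x\gamma^{-1})}^{-1}R_{\sigma(x)}J$ and $JR_{[\sigma(x)]\alpha^{-1}}^{-1}R_{\sigma(x)}J$ are precisely the ones whose coset translates match $R_{\sigma(x)}J$, rather than some nearby variant. Once the group structure of $\mathcal{P}(Q,\cdot)$, $\Phi(Q,\cdot)$, and $\Psi(Q,\cdot)$ is invoked to pass freely between a condition ``$U\in G$'' and the coset condition ``$VU\in VG$'', every equivalence follows, and the corollary is established by appealing to Corollary~\ref{8}.
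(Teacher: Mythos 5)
Your proposal is correct and is essentially the paper's own proof: the paper's argument for Corollary~\ref{0.9} is simply ``Use Corollary~\ref{8}'', and your left-coset rewriting of each membership condition is exactly the content of that reduction. The one detail worth making explicit in the $\Psi$-step is that $J^{-1}=J$ in a RIPL (since $(x^{\rho})^{\rho}=x$ there), which is what gives $(R_{\sigma(x\gamma^{-1})}J)^{-1}R_{\sigma(x)}J=JR_{\sigma(x\gamma^{-1})}^{-1}R_{\sigma(x)}J$, i.e.\ precisely the adjoint expression appearing in Corollary~\ref{8}.
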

\begin{proof}
Use Corollary~\ref{8}.\qed
\end{proof}

\begin{mylem}\label{8.3}
Let $(L,\cdot)$ be a loop. Then
\begin{enumerate}
\item $\delta\mathcal{P}(L,\cdot )\delta^{-1}=\mathcal{P}(L,\cdot )$ for all $\delta\in AUM(L,\cdot)$.
\item $\delta\Phi(L,\cdot )\delta^{-1}=\Phi(L,\cdot )$ and $\delta\Psi(L,\cdot )\delta^{-1}=\Psi(L,\cdot )$ for all $\delta\in AUM(L,\cdot)$.
\end{enumerate}
\end{mylem}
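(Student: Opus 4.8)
The plan is to exploit the group structure of $AUT(L,\cdot)$ together with the observation that an automorphism $\delta$ of $(L,\cdot)$ is precisely an element $\Delta=(\delta,\delta,\delta)\in AUT(L,\cdot)$. First I would record the composition rule in $AUT(L,\cdot)$: since the translation maps act on the right, one checks directly that $(A,B,C)(A',B',C')=(AA',BB',CC')$, and hence that $\Delta^{-1}=(\delta^{-1},\delta^{-1},\delta^{-1})$. Consequently, conjugation by $\Delta$ acts componentwise, i.e. for any autotopism $(A,B,C)$,
\begin{displaymath}
\Delta(A,B,C)\Delta^{-1}=(\delta A\delta^{-1},\,\delta B\delta^{-1},\,\delta C\delta^{-1})\in AUT(L,\cdot).
\end{displaymath}
This single identity is the engine behind both parts; everything else is bookkeeping.

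For part (1), take $U\in\mathcal{P}(L,\cdot)$, so that $(I,U,U)\in AUT(L,\cdot)$. Conjugating by $\Delta$ and using $\delta I\delta^{-1}=I$ yields $(I,\delta U\delta^{-1},\delta U\delta^{-1})\in AUT(L,\cdot)$, whence $\delta U\delta^{-1}\in\mathcal{P}(L,\cdot)$. This gives $\delta\mathcal{P}(L,\cdot)\delta^{-1}\subseteq\mathcal{P}(L,\cdot)$; applying the same step to the automorphism $\delta^{-1}$ gives the reverse inclusion, so equality holds.

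For part (2), take $U\in\Phi(L,\cdot)$ with adjoint $U'$, so that $(U,U'^{-1},I)\in AUT(L,\cdot)$. Conjugating by $\Delta$ gives $(\delta U\delta^{-1},\delta U'^{-1}\delta^{-1},I)\in AUT(L,\cdot)$. The key manipulation is to rewrite the middle entry: because composition reverses under inversion, $\delta U'^{-1}\delta^{-1}=(\delta U'\delta^{-1})^{-1}$, so the triple reads $(\delta U\delta^{-1},(\delta U'\delta^{-1})^{-1},I)$. Thus $\delta U\delta^{-1}\in\Phi(L,\cdot)$ with adjoint $\delta U'\delta^{-1}$. As before, running the argument with $\delta^{-1}$ gives $\delta\Phi(L,\cdot)\delta^{-1}=\Phi(L,\cdot)$. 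Moreover this very computation shows that conjugation by $\delta$ carries the adjoint $U'$ to the adjoint $\delta U'\delta^{-1}$ of $\delta U\delta^{-1}$, so it maps $\Psi(L,\cdot)$ into itself; the reverse inclusion again follows from $\delta^{-1}$, giving $\delta\Psi(L,\cdot)\delta^{-1}=\Psi(L,\cdot)$.

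I do not anticipate a genuine obstacle here: the content is simply that each of these sets is defined by the shape of its autotopism triple, namely by the position occupied by $I$, and that componentwise conjugation by an automorphism preserves those positions. The only point requiring care is the composition and inversion convention, that is, getting $\delta U'^{-1}\delta^{-1}=(\delta U'\delta^{-1})^{-1}$ right for the $\Phi$ and $\Psi$ case, and confirming $\Delta^{-1}=(\delta^{-1},\delta^{-1},\delta^{-1})$ so that conjugation is genuinely componentwise.
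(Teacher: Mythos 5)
Your proposal is correct and follows essentially the same route as the paper: conjugating the defining autotopism triples $(I,U,U)$ and $(U,U'^{-1},I)$ by $(\delta,\delta,\delta)\in AUT(L,\cdot)$ and reading off the resulting triple, with the reverse inclusion obtained from $\delta^{-1}$. The paper only writes out part (1) explicitly (declaring part (2) "similar"), so your spelled-out treatment of the $\Phi$/$\Psi$ case, including the identity $\delta U'^{-1}\delta^{-1}=(\delta U'\delta^{-1})^{-1}$ showing adjoints conjugate to adjoints, is just a more detailed rendering of the same argument.
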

\begin{proof}
\begin{enumerate}
\item Let $\delta\in AUM(L,\cdot)$ and $U\in\mathcal{P}(L,\cdot )$. Then $(\delta,\delta,\delta)(I,U,U)(\delta^{-1},\delta^{-1},\delta^{-1})=
(I,\delta U\delta^{-1},\delta U\delta^{-1})\in AUT(L,\cdot)\Rightarrow \delta U\delta^{-1}\in\mathcal{P}(L,\cdot )$. Hence the conclusion.
\item These are similar to the proof of 1. \qed
\end{enumerate}
\end{proof}

\begin{mycor}\label{8.4}
Let $(Q,\cdot)$ be a RIPL with a self map $\sigma$
 and let $(H,\circ )$ be the A-holomorph of $(Q,\cdot)$ with a self map $\sigma'$ such that $\sigma'~:~(\alpha,x)\mapsto (\alpha,\sigma(x))$ for all $(\alpha,x)\in H$. If $(H,\circ)$ is a $\sigma'$-GBL, then
 \begin{enumerate}
\item $$\delta R_{\sigma (x)}^{-1}R_{\sigma(x\gamma^{-1})}\delta^{-1},\delta R_{\sigma (x)}^{-1}R_{[\sigma(x)]\alpha^{-1}} \delta^{-1}\in\mathcal{P}(L,\cdot )~\textrm{for all}~\delta\in AUM(L,\cdot).$$
     In particular, $\alpha R_{\sigma (x)}^{-1}R_{\sigma(x\gamma^{-1})}\alpha^{-1},\gamma R_{\sigma (x)}^{-1}R_{[\sigma(x)]\alpha^{-1}} \gamma^{-1}\in\mathcal{P}(L,\cdot )~\textrm{for all}~x\in L.$
\item $\delta R_{\sigma (x)}^{-1}R_{\sigma(x\gamma^{-1})}\delta^{-1},\delta R_{\sigma (x)}^{-1}R_{[\sigma(x)]\alpha^{-1}}\delta^{-1}\in \Phi(L,\cdot )$ and
    $$\delta J R_{\sigma(x\gamma^{-1})}^{-1}R_{\sigma (x)}(\delta J)^{-1},\delta R_{[\sigma(x)]\alpha^{-1}}^{-1}R_{\sigma (x)} (\delta J)^{-1}\in\Psi(L,\cdot )~\textrm{for all}~\delta\in AUM(L,\cdot).$$
     In particular, $\alpha R_{\sigma (x)}^{-1}R_{\sigma(x\gamma^{-1})}\alpha^{-1},\gamma R_{\sigma (x)}^{-1}R_{[\sigma(x)]\alpha^{-1}}\gamma^{-1}\in \Phi(L,\cdot )$

      and $\alpha J R_{\sigma(x\gamma^{-1})}^{-1}R_{\sigma (x)}(\alpha J)^{-1},\gamma JR_{[\sigma(x)]\alpha^{-1}}^{-1}R_{\sigma (x)} (\gamma J)^{-1}\in\Psi(L,\cdot )$ for all $x\in L$.
\end{enumerate}
\end{mycor}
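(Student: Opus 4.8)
The plan is to obtain both conclusions as immediate conjugates of the memberships already recorded in Corollary~\ref{8}, transported by the conjugation-invariance established in Lemma~\ref{8.3}. Write $U_1=R_{\sigma(x)}^{-1}R_{\sigma(x\gamma^{-1})}$ and $U_2=R_{\sigma(x)}^{-1}R_{[\sigma(x)]\alpha^{-1}}$. Since $(H,\circ)$ is a $\sigma'$-GBL, Corollary~\ref{8}(2) gives $U_1,U_2\in\mathcal{P}(Q,\cdot)$, while Corollary~\ref{8}(3) gives $U_1,U_2\in\Phi(Q,\cdot)$ with respective adjoints $U_1'=JR_{\sigma(x\gamma^{-1})}^{-1}R_{\sigma(x)}J$ and $U_2'=JR_{[\sigma(x)]\alpha^{-1}}^{-1}R_{\sigma(x)}J$ in $\Psi(Q,\cdot)$. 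Everything after this is a matter of conjugating by a fixed $\delta\in AUM(Q,\cdot)$.

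For the first assertion I would apply Lemma~\ref{8.3}(1) verbatim: because $\delta\mathcal{P}(Q,\cdot)\delta^{-1}=\mathcal{P}(Q,\cdot)$ for every $\delta\in AUM(Q,\cdot)$, the conjugates $\delta U_1\delta^{-1}$ and $\delta U_2\delta^{-1}$ stay in $\mathcal{P}(Q,\cdot)$, which is precisely the first displayed statement. The ``in particular'' clause is just the specialisation $\delta=\alpha$ (for $U_1$) and $\delta=\gamma$ (for $U_2$), legitimate because $\alpha,\gamma\in A(Q)\le AUM(Q,\cdot)$.

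For the second assertion, Lemma~\ref{8.3}(2) gives $\delta U_i\delta^{-1}\in\Phi(Q,\cdot)$ in the same way. The only part needing an argument is that the adjoint of $\delta U_i\delta^{-1}$ is the map named in the statement. I would recover it from the defining autotopism: from $(U_i,U_i'^{-1},I)\in AUT(Q,\cdot)$, conjugating by $(\delta,\delta,\delta)$ yields $(\delta U_i\delta^{-1},\delta U_i'^{-1}\delta^{-1},I)\in AUT(Q,\cdot)$, and since $\delta U_i'^{-1}\delta^{-1}=(\delta U_i'\delta^{-1})^{-1}$, the adjoint of $\delta U_i\delta^{-1}$ is $\delta U_i'\delta^{-1}$, which lies in $\Psi(Q,\cdot)$ by Lemma~\ref{8.3}(2).

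The one genuine point to verify is that this natural adjoint $\delta U_i'\delta^{-1}=\delta J(\cdots)J\delta^{-1}$ agrees with the form $\delta J(\cdots)(\delta J)^{-1}$ printed in the statement, where $(\cdots)$ abbreviates $R_{\sigma(x\gamma^{-1})}^{-1}R_{\sigma(x)}$ for $i=1$. Since $(\delta J)^{-1}=J^{-1}\delta^{-1}$, the two expressions coincide exactly when $J^{-1}=J$, i.e. when $J$ is an involution. This holds here: $(H,\circ)$ being a $\sigma'$-GBL forces $(Q,\cdot)$ to be a $\sigma$-GBL, and by Theorem~\ref{1} such a loop is a RIPL with $x^{\lambda}=x^{\rho}$, whence $(xJ)J=x$ for all $x$ and so $J^2=I$. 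Granting this, $\delta U_i'\delta^{-1}=\delta J(\cdots)(\delta J)^{-1}$ as claimed, and specialising $\delta=\alpha,\gamma$ as before yields the corresponding ``in particular'' clause. I expect this involution check to be the only step where one must look past the formal conjugation and invoke the specific structure of a GBL; the rest is a direct reading of Corollary~\ref{8} through Lemma~\ref{8.3}.
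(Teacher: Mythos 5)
Your proposal is correct and follows exactly the paper's own route: the paper's proof of Corollary~\ref{8.4} is precisely ``Use Corollary~\ref{8} and Lemma~\ref{8.3},'' which is what you do, with the conjugation of the autotopism $(U_i,U_i'^{-1},I)$ by $(\delta,\delta,\delta)$ and the involution check $J^2=I$ (immediate in a RIPL) merely filling in details the paper leaves implicit.
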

\begin{proof}
Use Corollary~\ref{8} and Lemma~\ref{8.3}.\qed
\end{proof}

\begin{mycor}\label{9}
Let $(Q,\cdot)$ be a RIPL with a self map $\sigma$
 and let $(H,\circ )$ be the A-holomorph of $(Q,\cdot)$ with a self map $\sigma'$ such that $\sigma'~:~(\alpha,x)\mapsto (\alpha,\sigma(x))$ for all $(\alpha,x)\in H$. The following are equivalent
 \begin{enumerate}
 \item $(H,\circ)$ is a $\sigma'$-GBL.
\item
\begin{enumerate}
   \item $(Q,\cdot)$ is a $\sigma$-GBL;
  \item $\sigma (x)^{-1}\sigma(x\gamma^{-1}),\sigma (x)^{-1}[\sigma(x)]\alpha^{-1}\in N_\rho (Q,\cdot)$ for all $x\in Q$ and $\alpha,\gamma\in A(Q)$.
\end{enumerate}
\item
\begin{enumerate}
 \item $(Q,\cdot)$ is a $\sigma$-GBL;
  \item $\sigma (x)^{-1}\sigma(x\gamma^{-1}),\sigma (x)^{-1}[\sigma(x)]\alpha^{-1}\in N_\mu(Q,\cdot),\big(\sigma(x\gamma^{-1})\big)^{-1}\sigma (x),\big([\sigma(x)]\alpha^{-1})^{-1}\sigma (x)\in N_\mu(Q,\cdot)$ for all $x\in Q$ and $\alpha,\gamma\in A(Q)$.
\end{enumerate}
 \end{enumerate}
\end{mycor}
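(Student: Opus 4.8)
The plan is to recognise Corollary~\ref{9} as the \emph{nuclear} reformulation of Corollary~\ref{8}: conditions (2) and (3) are to be extracted from the $\mathcal{P}$-, $\Phi$- and $\Psi$-membership conditions of Corollary~\ref{8} by transporting them across the Belousov isomorphisms of Theorem~\ref{sim}. Concretely, I would invoke $\psi:\mathcal{P}(Q,\cdot)\to N_\rho(Q,\cdot)$, $\varpi:\Phi(Q,\cdot)\to N_\mu(Q,\cdot)$ and $\beta:\Psi(Q,\cdot)\to N_\mu(Q,\cdot)$, each of which is evaluation at the identity, $U\mapsto eU$. In this way the whole argument reduces to computing $eU$ for the explicit bijections appearing in Corollary~\ref{8} and reading off the resulting nuclear elements, the clause ``$(Q,\cdot)$ is a $\sigma$-GBL'' being carried through untouched.

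For the forward passage I would compute these images directly, using the RIP in the form $R_c^{-1}=R_{c^\rho}$ together with $c^\rho=c^{-1}$ (Theorem~\ref{1}), so that $eR_c^{-1}=c^{-1}$, and using $eJ=e$ since $J$ is inversion. This yields $e\big(R_{\sigma(x)}^{-1}R_{\sigma(x\gamma^{-1})}\big)=\sigma(x)^{-1}\sigma(x\gamma^{-1})$ and $e\big(R_{\sigma(x)}^{-1}R_{[\sigma(x)]\alpha^{-1}}\big)=\sigma(x)^{-1}[\sigma(x)]\alpha^{-1}$, so that $\psi$ turns Corollary~\ref{8}(2) into condition (2) and $\varpi$ turns the $\Phi$-half of Corollary~\ref{8}(3) into the first pair of $N_\mu$-memberships in (3). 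For the $\Psi$-half I would compute $e\big(JR_{\sigma(x\gamma^{-1})}^{-1}R_{\sigma(x)}J\big)=\big(\sigma(x\gamma^{-1})^{-1}\sigma(x)\big)^{-1}$, and likewise for the $\alpha$-term; since the middle nucleus $N_\mu(Q,\cdot)$ is a subgroup of $(Q,\cdot)$ and hence closed under inversion, $\big(\sigma(x\gamma^{-1})^{-1}\sigma(x)\big)^{-1}\in N_\mu$ if and only if $\sigma(x\gamma^{-1})^{-1}\sigma(x)\in N_\mu$, which supplies exactly the second pair of memberships in (3).

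The delicate point — and the one I would treat most carefully — is the \emph{converse} passage from nuclearity back to regularity. Belousov's maps are isomorphisms, hence bijective, but evaluation at $e$ is only an honest equivalence once one knows that the composite $R_{\sigma(x)}^{-1}R_{\sigma(x\gamma^{-1})}$ actually coincides with the single right translation $R_n$ determined by its value $n=eU$; a bare hypothesis $n\in N_\rho$ does not by itself force the associativity $(y\,\sigma(x)^{-1})\sigma(x\gamma^{-1})=y\big(\sigma(x)^{-1}\sigma(x\gamma^{-1})\big)$. I would close this gap not by a direct nuclear computation but by routing each converse through statement (1): the implication (2)$\Rightarrow$(1) is already furnished verbatim by Theorem~\ref{5}, while (3)$\Rightarrow$(1) is obtained by feeding the $N_\mu$-data back into the autotopism form of Corollary~\ref{8}(3) (equivalently Theorem~\ref{7}(2)--(3)). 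Since Corollary~\ref{8} and Theorem~\ref{5} already pin (1), Corollary~\ref{8}(2) and Corollary~\ref{8}(3) to one another, every arrow then follows by transitivity, and the identifications $R_{\sigma(x)}^{-1}R_{\sigma(x\gamma^{-1})}=R_{\sigma(x)^{-1}\sigma(x\gamma^{-1})}$ emerge a posteriori rather than being assumed. I expect this reconciliation of the bijective-looking Belousov correspondence with the genuine two-sided equivalence to be the main obstacle; once it is organised around statement (1), the remainder is the bookkeeping of the four $eU$-evaluations above.
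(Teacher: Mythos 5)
Your reduction is the paper's own: Corollary~\ref{8} combined with the evaluation-at-identity isomorphisms $\psi,\varpi,\beta$ of Theorem~\ref{sim}, and your forward computations are correct (including $e\big(JR_{\sigma(x\gamma^{-1})}^{-1}R_{\sigma (x)}J\big)=\big(\sigma(x\gamma^{-1})^{-1}\sigma (x)\big)^{-1}$ followed by closure of $N_\mu(Q,\cdot)$ under inversion), as is the disposal of $(2)\Rightarrow(1)$ by quoting Theorem~\ref{5}, which is verbatim that equivalence. The genuine gap is the arrow $(3)\Rightarrow(1)$. Transitivity cannot produce it: Corollary~\ref{8} and Theorem~\ref{5} tie statement (1) to statement (2) and to the $\mathcal{P}/\Phi/\Psi$ conditions, but none of these links has statement (3) as a \emph{hypothesis}; the only mechanism you offer is ``feeding the $N_\mu$-data back into the autotopism form,'' and that mechanism presupposes the identification $R_{\sigma (x)}^{-1}R_{\sigma(x\gamma^{-1})}=R_m$, where $m=\sigma (x)^{-1}\sigma(x\gamma^{-1})$, which is precisely what you have postponed to ``a posteriori.'' For this arrow it cannot be a posteriori: that would mean after (1) is available, and (1) is the conclusion being sought. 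Nor is the identification formal from $m\in N_\mu(Q,\cdot)$ alone: middle-nuclearity lets you re-associate $zm\cdot y=z\cdot my$, whereas what is needed is to pull $m$ out of the \emph{rightmost} slot, $(y\sigma(x))\cdot m=y\cdot(\sigma(x)m)$, which is right-nuclearity.

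The gap is fillable with a tool the paper already uses, and this is the step your plan is missing. For $m\in N_\mu(Q,\cdot)$ one has $(R_m,L_m^{-1},I)\in AUT(Q,\cdot)$; the RIP transformation $(U,V,W)\in AUT(Q,\cdot)\Rightarrow(W,JVJ,U)\in AUT(Q,\cdot)$ (quoted in the proof of Theorem~\ref{7}) turns this into $(I,JL_m^{-1}J,R_m)\in AUT(Q,\cdot)$, and evaluating the first slot at $e$ forces $JL_m^{-1}J=R_m$; hence $(I,R_m,R_m)\in AUT(Q,\cdot)$, i.e.\ $m\in N_\rho(Q,\cdot)$. So in a RIPL one has $N_\mu(Q,\cdot)\subseteq N_\rho(Q,\cdot)$, the first pair of memberships in (3)(b) is already the $N_\rho$-data of (2)(b), and Theorem~\ref{5} then closes $(3)\Rightarrow(1)$. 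Only at this point does your identification follow: right-nuclearity of $m$ and the two-sided inverses supplied by Theorem~\ref{1} give $\sigma (x)^{-1}(\sigma (x)m)=(\sigma (x)^{-1}\sigma (x))m=m=\sigma(x)^{-1}\sigma(x\gamma^{-1})$, hence $\sigma (x)m=\sigma(x\gamma^{-1})$ and $R_{\sigma (x)}R_m=R_{\sigma (x)m}=R_{\sigma(x\gamma^{-1})}$. For what it is worth, the paper's own proof is silent on the very same point: it asserts the two-sided equivalence ``$U\in\mathcal{P}(Q,\cdot)\Leftrightarrow eU\in N_\rho(Q,\cdot)$'' (and treats the $\mu$-case as ``similar'') directly from Theorem~\ref{sim}, so your instinct that the converse requires a genuine argument is sound; it is only your resolution of it for item (3) that does not work as stated.
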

\begin{proof}
We shall use Corollary~\ref{8} and Theorem~\ref{sim}.
\begin{enumerate}
  \item Since $\mathcal{P}(G,\cdot )\stackrel{\psi}{\cong} N_{\rho}(G,\cdot )$, then $R_{\sigma (x)}^{-1}R_{\sigma(x\gamma^{-1})},R_{\sigma (x)}^{-1}R_{[\sigma(x)]\alpha^{-1}}\in \mathcal{P}(Q,\cdot)\Leftrightarrow eR_{\sigma (x)}^{-1}R_{\sigma(x\gamma^{-1})},eR_{\sigma (x)}^{-1}R_{[\sigma(x)]\alpha^{-1}}\in N_{\rho}(G,\cdot )\Leftrightarrow\sigma (x)^{-1}\sigma(x\gamma^{-1}),\sigma (x)^{-1}[\sigma(x)]\alpha^{-1}\in N_\rho (Q,\cdot)$ for all $x\in Q$ and $\alpha,\gamma\in A(Q)$.
  \item This is similar to 1.\qed
\end{enumerate}
\end{proof}

\begin{myth}\label{10}
Let $(Q,\cdot)$ be a RIPL with a self map $\sigma$
 and let $(H,\circ )$ be the A-holomorph of $(Q,\cdot)$ with a self map $\sigma'$ such that $\sigma'~:~(\alpha,x)\mapsto (\alpha,\sigma(x))$ for all $(\alpha,x)\in H$. If $(H,\circ)$ is a $\sigma'$-GBL, then:
 \begin{enumerate}
   \item $JR_{\sigma(x\gamma^{-1})}^{-1}R_{\sigma (x)}J=L_{\sigma (x)^{-1}\sigma(x\gamma^{-1})}$;
   \begin{enumerate}
     \item $\Big[y^{-1}[\sigma(x\gamma^{-1})]^{-1}\cdot \sigma (x)]^{-1}=\big[\sigma (x)^{-1}\sigma(x\gamma^{-1})\big]y$,
     \item $\big[\sigma(x\gamma^{-1})^{-1}\sigma (x)\big]^{-1}=\sigma (x)^{-1}\sigma(x\gamma^{-1})$.
   \end{enumerate}
   \item $R_{\sigma (x)}^{-1}R_{\sigma(x\gamma^{-1})}=R_{\Big\{[\sigma(x\gamma^{-1})]^{-1}\sigma (x)\Big\}^{-1}}=R_{\sigma (x)^{-1}\sigma(x\gamma^{-1})}$;
       \begin{enumerate}
         \item $y\sigma (x)^{-1}\cdot \sigma(x\gamma^{-1})=y\Big\{[\sigma(x\gamma^{-1})]^{-1}\sigma (x)^{-1}\Big\}^{-1}=y\big[\sigma          (x)^{-1}\cdot \sigma(x\gamma^{-1})\big]$,
         \item $\sigma (x)\Big\{[\sigma(x\gamma^{-1})]^{-1}\sigma (x)^{-1}\Big\}^{-1}=\sigma(x\gamma^{-1})$.
       \end{enumerate}
   \item $JR_{[\sigma(x)]\alpha^{-1}}^{-1}R_{\sigma (x)}J=L_{\sigma (x)^{-1}[\sigma(x)]\alpha^{-1}}$;
   \begin{enumerate}
     \item $y^{-1}\big(([\sigma(x)]\alpha^{-1})^{-1}\cdot\sigma (x)\big)^{-1}=\big[\sigma (x)^{-1}[(\sigma(x))\alpha^{-1}]\big]y$;
     \item $\big(([\sigma(x)]\alpha^{-1})^{-1}\sigma (x)\big)^{-1}=\sigma (x)^{-1}\big[(\sigma(x))\alpha^{-1}\big]$.
   \end{enumerate}
   \item $R_{\sigma (x)}^{-1}R_{[\sigma(x)]\alpha^{-1}}=R_{\Big\{\big[(\sigma(x))\alpha^{-1}\big]^{-1}\sigma (x)\Big\}^{-1}}=R_{\sigma (x)^{-1}}R_{[\sigma(x)]\alpha^{-1}}$;
       \begin{enumerate}
         \item $y(\sigma (x))^{-1}\cdot [\sigma(x)]\alpha^{-1}=y\Big\{\big[(\sigma(x))\alpha^{-1}\big]^{-1}\sigma (x)\Big\}^{-1}=y\big[\sigma (x)^{-1}\cdot [(\sigma(x))\alpha^{-1}]\big]$,
         \item $\sigma (x)\Big\{\big[(\sigma(x))\alpha^{-1}\big]^{-1}\sigma (x)\Big\}^{-1}=(\sigma (x))\alpha^{-1}$.
       \end{enumerate}
 \end{enumerate}
\end{myth}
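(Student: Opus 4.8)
The plan is to derive all four statements, together with their element-wise specializations, from the autotopism characterizations already in hand, reading every displayed equation as an identity of translation maps and then unwinding it pointwise. First I would record the standing consequences of the hypothesis: since $(H,\circ)$ is a $\sigma'$-GBL, Theorem~\ref{5} gives that $(Q,\cdot)$ is a $\sigma$-GBL, so by Theorem~\ref{1} it is an RIPL with two-sided inverses ($x^\lambda=x^\rho=x^{-1}$) and $R_a^{-1}=R_{a^{-1}}$ for every $a$; moreover Corollary~\ref{9} places $n:=\sigma(x)^{-1}\sigma(x\gamma^{-1})$ and $m:=\sigma(x)^{-1}[\sigma(x)]\alpha^{-1}$ in $N_\rho(Q,\cdot)$. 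These two facts are the only external inputs needed.

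For the right-translation identities in parts 2 and 4, I would invoke Corollary~\ref{8}, by which $U:=R_{\sigma(x)}^{-1}R_{\sigma(x\gamma^{-1})}\in\mathcal{P}(Q,\cdot)$. Under the Belousov isomorphism $\psi:\mathcal{P}(Q,\cdot)\stackrel{\cong}{\to}N_\rho(Q,\cdot)$ of Theorem~\ref{sim} one computes $\psi(U)=eU=eR_{\sigma(x)^{-1}}R_{\sigma(x\gamma^{-1})}=\sigma(x)^{-1}\sigma(x\gamma^{-1})=n$. Since $n\in N_\rho$ forces $R_n\in\mathcal{P}(Q,\cdot)$ with $\psi(R_n)=n$ as well, injectivity of $\psi$ yields $U=R_n=R_{\sigma(x)^{-1}\sigma(x\gamma^{-1})}$, which is part 2; part 4 follows verbatim with $m$ in place of $n$. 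The remaining middle equality $R_{\{[\sigma(x\gamma^{-1})]^{-1}\sigma(x)\}^{-1}}=R_n$ (and its $\alpha$-analogue) is then just a restatement of 1(b) (respectively 3(b)).

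For the left-translation identities in parts 1 and 3, I would use that, by Corollary~\ref{8} (equivalently Theorem~\ref{7}), $U$ is $\mu$-regular with adjoint $U'=JR_{\sigma(x\gamma^{-1})}^{-1}R_{\sigma(x)}J$, so that $(U,U'^{-1},I)=(R_n,U'^{-1},I)\in AUT(Q,\cdot)$. Writing this out as $xn\cdot yU'^{-1}=xy$ and setting $x=e$ gives $n\cdot yU'^{-1}=y$ for all $y$; hence $U'^{-1}=L_n^{-1}$ and therefore $U'=L_n=L_{\sigma(x)^{-1}\sigma(x\gamma^{-1})}$, which is part 1, with part 3 identical after replacing $n$ by $m$. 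Only left division (always available in a loop) is invoked here, so no left inverse property is required.

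Finally, the element-wise clauses (a), (b) under each head are obtained by evaluating these operator equalities. For 1(a) I would apply $JR_{\sigma(x\gamma^{-1})}^{-1}R_{\sigma(x)}J=L_n$ to an arbitrary $y$, expanding the left side through $yJ=y^{-1}$ and $R_a^{-1}=R_{a^{-1}}$ to reach $\big(y^{-1}[\sigma(x\gamma^{-1})]^{-1}\cdot\sigma(x)\big)^{-1}=ny$; specializing $y=e$ then gives 1(b), and the same two evaluations applied to parts 2, 3, 4 produce 2(a)--(b), 3(a)--(b), 4(a)--(b). I expect no conceptual obstacle: the whole theorem is a translation of the already-established regularity and nuclearity data into pointwise form, and the only real care required is consistent bookkeeping of the RIP conventions $R_a^{-1}=R_{a^{-1}}$ and $x^\lambda=x^\rho=x^{-1}$ when peeling the $J$'s and the inverse translations off each side.
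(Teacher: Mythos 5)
Your proposal is correct and follows essentially the same route as the paper: both arguments take the autotopisms $(I,U,U)$ and $\big(U,(U')^{-1},I\big)$ supplied by Theorems~\ref{4.2} and~\ref{7} (equivalently Corollary~\ref{8}), evaluate them at the identity element to identify $U=R_{\sigma(x)^{-1}\sigma(x\gamma^{-1})}$ and $U'=L_{\sigma(x)^{-1}\sigma(x\gamma^{-1})}$, and then read off the pointwise clauses (a),(b) by evaluating at arbitrary $y$ and specializing $y=e$. Your detour through the injectivity of Belousov's isomorphism $\psi$ is just a repackaging of the paper's direct substitution $z=e$, so the two proofs coincide in substance.
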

\begin{proof}
\begin{enumerate}
  \item From Theorem~\ref{7}, $\Big(R_{\sigma (x)}^{-1}R_{\sigma(x\gamma^{-1})}, \big(JR_{\sigma(x\gamma^{-1})}^{-1}R_{\sigma (x)}J\big)^{-1},I\Big)\in AUT(Q,\cdot)$ implies
      \begin{equation*}
        yR_{\sigma (x)}^{-1}R_{\sigma(x\gamma^{-1})}\cdot z=y\cdot zJR_{\sigma(x\gamma^{-1})}^{-1}R_{\sigma (x)}J.
      \end{equation*}
              Put $y=e$ to get $JR_{\sigma(x\gamma^{-1})}^{-1}R_{\sigma (x)}J=L_{\sigma (x)^{-1}\sigma(x\gamma^{-1})}$. (a) and (b) follow from this.
  \item From Theorem~\ref{4.2}, $\Big(I,R_{\sigma (x)}^{-1}R_{\sigma(x\gamma^{-1})}, R_{\sigma (x)}^{-1}R_{\sigma(x\gamma^{-1})}\Big)\in AUT(Q,\cdot)$ implies
       \begin{equation*}
        y\cdot zR_{\sigma (x)}^{-1}R_{\sigma(x\gamma^{-1})}=(yz)R_{\sigma (x)}^{-1}R_{\sigma(x\gamma^{-1})}.
       \end{equation*}
               Put $z=e$ and subsequently $y=e$ to get

               $R_{\sigma (x)}^{-1}R_{\sigma(x\gamma^{-1})}=R_{\Big\{[\sigma(x\gamma^{-1})]^{-1}\sigma (x)\Big\}^{-1}}=R_{\sigma (x)^{-1}\sigma(x\gamma^{-1})}$. (a) and (b) follow from this.
  \item This is similar to 1.
  \item This is similar to 2.\qed

\end{enumerate}
\end{proof}

\begin{myth}\label{11}
Let $(Q,\cdot)$ be a RIPL with a bijective self map $\sigma$
 and let $(H,\circ )$ be the A-holomorph of $(Q,\cdot)$ with a self map $\sigma'$ such that $\sigma'~:~(\alpha,x)\mapsto (\alpha,\sigma(x))$ for all $(\alpha,x)\in H$. The following are equivalent
 \begin{enumerate}
 \item $(H,\circ)$ is a $\sigma'$-GBL.
\item
\begin{enumerate}
   \item $(Q,\cdot)$ is a $\sigma$-GBL;
  \item $\sigma (x)^{-1}\sigma^2\big(\sigma^{-1}(x)\cdot n\big)\in N_\rho (Q,\cdot)~\ni~\gamma=\sigma R_{n}\sigma^{-1}~\forall~\gamma\in A(Q),~x\in Q$ and some $n\in N_\rho (Q,\cdot)$.
\end{enumerate}
\item
\begin{enumerate}
 \item $(Q,\cdot)$ is a $\sigma$-GBL;
 \item $\sigma (x)^{-1}\sigma^2\big(\sigma^{-1}(x)\cdot n\big)\in N_\mu (Q,\cdot)~\ni~\gamma=\sigma R_{n}\sigma^{-1}~\forall~\gamma\in A(Q)~,x\in Q$ and some $n\in N_\mu (Q,\cdot)$.
 \item $\big[\sigma^2\big(\sigma^{-1}(x)\cdot n\big)\big]^{-1}\sigma (x)\in N_\mu (Q,\cdot)~\ni~\gamma=\sigma R_{n}\sigma^{-1}~\forall~\gamma\in A(Q)~,x\in Q$ and some $n\in N_\mu (Q,\cdot)$.
  \item $(\sigma(x)\cdot n')^{-1}\sigma (x)\in N_\mu (Q,\cdot)~\ni~\alpha=R_{n'}^{-1}~\forall~\alpha\in A(Q)~,x\in Q$ and some $n'\in N_\mu (Q,\cdot)$.
 \end{enumerate}
  \end{enumerate}
  Hence, $\sigma (xn^{-1})=\sigma(x)n'^{-1}$ for all $x\in Q$ and some $n,n'\in N_\mu (Q,\cdot)$.
\end{myth}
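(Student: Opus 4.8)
The plan is to read Theorem~\ref{11} as the ``nuclear-element'' translation of Corollary~\ref{9}, after feeding in the explicit description of the automorphism group supplied by Lemma~\ref{6}. Since $\sigma$ is now assumed bijective, Lemma~\ref{6} applies and tells us that whenever $(H,\circ)$ is a $\sigma'$-GBL, each element of $A(Q)$ admits simultaneously a normal form $\gamma=\sigma R_{n}\sigma^{-1}$ and a normal form $\alpha=R_{n'}^{-1}$ with $n,n'$ nuclear. The whole proof is then the substitution of these two normal forms into the membership conditions of Corollary~\ref{9}, followed by simplification using the RIP and the identity $R_{n}^{-1}=R_{n^{-1}}$ valid for a nuclear $n$.

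For the equivalence $(1)\Leftrightarrow(2)$ I would start from Corollary~\ref{9}(2): $(H,\circ)$ is a $\sigma'$-GBL iff $(Q,\cdot)$ is a $\sigma$-GBL and $\sigma(x)^{-1}\sigma(x\gamma^{-1}),\ \sigma(x)^{-1}[\sigma(x)]\alpha^{-1}\in N_\rho(Q,\cdot)$ for all $\gamma,\alpha\in A(Q)$. Substituting $\gamma=\sigma R_{n}\sigma^{-1}$ and using bijectivity of $\sigma$ rewrites $\sigma(x\gamma^{-1})$ as $\sigma^{2}\big(\sigma^{-1}(x)\cdot n\big)$, which turns the first membership into clause $(2)(b)$; substituting $\alpha=R_{n'}^{-1}$ turns $[\sigma(x)]\alpha^{-1}$ into $\sigma(x)\cdot n'$, so the second membership collapses to $n'\in N_\rho(Q,\cdot)$, which already holds by Lemma~\ref{6} and hence imposes nothing new. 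This is precisely why only the $\gamma$-derived clause survives in statement $(2)$.

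For $(1)\Leftrightarrow(3)$ I would run the same substitution through the middle-nuclear half of Corollary~\ref{9}, tracking the adjoint data: Belousov's Theorem~\ref{sim} converts the $\mu$-regularity and adjoint memberships of Corollary~\ref{8} into membership in $N_\mu(Q,\cdot)$ of the corresponding images $eU$ and $eU'$. The map $U$ yields the forward clause $\sigma(x)^{-1}\sigma^{2}(\sigma^{-1}(x)\cdot n)\in N_\mu$ of $(3)(b)$, its adjoint $U'$ yields the reversed clause $[\sigma^{2}(\sigma^{-1}(x)\cdot n)]^{-1}\sigma(x)\in N_\mu$ of $(3)(c)$, and the $\alpha$-side gives $(3)(d)$. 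Unlike the $\rho$-case, the $\mu$-case does not trivialize, because the defining relation of $N_\mu$ is genuinely two-sided and the adjoint supplies the dual membership.

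The concluding identity then exploits that Lemma~\ref{6} furnishes two different normal forms for one and the same automorphism: equating $\sigma R_{n}\sigma^{-1}=R_{n'}^{-1}$ in $A(Q)$ and evaluating at $x$ yields $\sigma(xn^{-1})=\sigma(x)n'^{-1}$. I expect the main obstacle to be the bookkeeping in this translation, namely keeping the composition order of $\sigma,\sigma^{-1},R_n$ straight so that $\sigma(x\gamma^{-1})$ really simplifies to $\sigma^{2}(\sigma^{-1}(x)\cdot n)$, and correctly pairing each $\mu$-regular map with its adjoint so that the forward and reversed clauses $(3)(b)$--$(3)(c)$ and their $\alpha$-analogue $(3)(d)$ land on the intended elements. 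Once these normal forms and Belousov's isomorphisms are in place, everything else is routine verification.
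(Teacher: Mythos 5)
Your proposal is correct and is essentially the paper's own proof: the paper disposes of Theorem~\ref{11} with the single line ``This is achieved by Corollary~\ref{9} and Lemma~\ref{6}'', which is exactly your strategy of taking the normal forms $\gamma=\sigma R_{n}\sigma^{-1}$ and $\alpha=R_{n'}^{-1}$ from Lemma~\ref{6} and substituting them into the right-nuclear and middle-nuclear membership conditions of Corollary~\ref{9}. Your additional observations --- that the $\alpha$-clause collapses in the $N_\rho$ case (since $\sigma(x)^{-1}(\sigma(x)n')=n'$ for nuclear $n'$) but survives in the $N_\mu$ case, and that the concluding identity $\sigma(xn^{-1})=\sigma(x)n'^{-1}$ comes from equating the two normal forms of a single automorphism --- are precisely the details the paper leaves implicit.
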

\begin{proof}
This is achieved by Corollary~\ref{9} and Lemma~\ref{6}.\qed
\end{proof}

\begin{mycor}\label{12}
Let $(Q,\cdot)$ be a RIPL with a bijective self map $\sigma$
 and let $(H,\circ )$ be the A-holomorph of $(Q,\cdot)$ with a self map $\sigma'$ such that $\sigma'~:~(\alpha,x)\mapsto (\alpha,\sigma(x))$ for all $(\alpha,x)\in H$. $(H,\circ)$ is a $\sigma'$-GBL implies
 \begin{enumerate}
   \item $(Q,\cdot)$ is a $\sigma$-GBL.
  \item $\sigma (x)^{-1}\sigma^2\big(\sigma^{-1}(x)\cdot n\big)\in N_\rho (Q,\cdot)~\forall~x\in Q$ and some $n\in N_\rho (Q,\cdot)$.
  \item $\big[\sigma^2\big(\sigma^{-1}(x)\cdot n\big)\big]^{-1}\sigma (x)\in N_\mu (Q,\cdot)~\forall~x\in Q$ and some $n\in N_\mu (Q,\cdot)$.
 \item $(\sigma(x)\cdot n)^{-1}\sigma (x)\in N_\mu (Q,\cdot)~\forall~x\in Q$ and some $n\in N_\mu (Q,\cdot)$.
  \item $\sigma (xn^{-1})=\sigma(x)n'^{-1}$ for all $x\in Q$ and some $n,n'\in N_\mu (Q,\cdot)$.
\end{enumerate}
 \end{mycor}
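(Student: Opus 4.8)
The plan is to read the corollary off directly from Theorem~\ref{11}, of which it is simply the forward (necessity) half with the bookkeeping quantifiers suppressed. Since we assume $(H,\circ)$ is a $\sigma'$-GBL, statement~(1) of Theorem~\ref{11} holds, and the equivalences established there immediately force both statement~(2) and statement~(3) of that theorem. All the analytic content of the present corollary is therefore already available, and the work is one of extraction and restatement rather than fresh computation.

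First I would harvest the individual assertions. Item~1, that $(Q,\cdot)$ is a $\sigma$-GBL, is literally condition~(a) of Theorem~\ref{11}(2) (equivalently of~(3)). Item~2 is condition~(b) of Theorem~\ref{11}(2); item~3 is condition~(c) of Theorem~\ref{11}(3); and item~4 is condition~(d) of Theorem~\ref{11}(3), after the harmless renaming $n'\mapsto n$. In passing from the theorem to the corollary the only change is cosmetic: I drop the coupling clauses $\gamma=\sigma R_{n}\sigma^{-1}$ and $\alpha=R_{n'}^{-1}$, which serve only to parametrise $A(Q)$ for the converse, and retain the surviving existential form ``for all $x\in Q$ and some $n\in N_\rho$ (resp.\ $N_\mu$)''. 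Because Theorem~\ref{11} asserts each membership for every $\gamma,\alpha\in A(Q)$ together with a matching nuclear element, the weaker ``for some $n$'' phrasing of the corollary is an immediate consequence.

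Finally, item~5, the identity $\sigma(xn^{-1})=\sigma(x)n'^{-1}$, is exactly the concluding ``Hence'' line of Theorem~\ref{11}, which I would recover through Lemma~\ref{6} in its $N_\mu$-analogue (supplied by Corollary~\ref{9} via Theorem~\ref{sim}). The point is that each $\theta\in A(Q)$ admits the two representations $\theta=\sigma R_{n}\sigma^{-1}$ and $\theta=R_{n'}^{-1}$; evaluating $x\theta$ at $x=\sigma(w)$ and comparing the two expressions, while using $R_{n'}^{-1}=R_{(n')^{-1}}$ (valid since a $\sigma$-GBL is an RIPL with $x^{\lambda}=x^{\rho}=x^{-1}$ by Theorem~\ref{1}), yields $\sigma(wn)=\sigma(w)n'^{-1}$, which is item~5 after renaming. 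The only step demanding any care is this last translation of the $N_\mu$-regularity data into the two parametrizations of $A(Q)$ together with the collapse of the right-translation inverse to a genuine inverse; beyond that I expect no real obstacle, since Theorem~\ref{11} already carries the entire burden.
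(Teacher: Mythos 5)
Your proposal is correct and takes essentially the same route as the paper, whose entire proof of this corollary is the single line ``This follows from Theorem~\ref{11}'': the corollary is precisely the forward implication of that theorem with the coupling clauses $\gamma=\sigma R_n\sigma^{-1}$ and $\alpha=R_{n'}^{-1}$ suppressed and the ``Hence'' clause retained as item~5. Your item-by-item extraction (and the optional re-derivation of item~5 via Lemma~\ref{6}) just spells out in detail what the paper leaves implicit.
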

\begin{proof}
This follows from Theorem~\ref{11}.\qed
\end{proof}

\begin{mycor}\label{13}
Let $(Q,\cdot)$ be a RIPL and let $(H,\circ )$ be the A-holomorph of $(Q,\cdot)$. The following are equivalent
 \begin{enumerate}
 \item $(H,\circ)$ is a Bol loop.
\item
\begin{enumerate}
   \item $(Q,\cdot)$ is a Bol loop;
  \item $\gamma=R_{n}^{-1}~\forall~\gamma\in A(Q)$ and some $n\in N_\rho (Q,\cdot)$.
\end{enumerate}
\item
\begin{enumerate}
 \item $(Q,\cdot)$ is a Bol loop;
 \item $\gamma=R_{n}^{-1}~\forall~\gamma\in A(Q)~,x\in Q$ and some $n\in N_\mu (Q,\cdot)$;
 \item $(x\cdot n)^{-1}x\in N_\mu (Q,\cdot)~\ni~\gamma=R_{n}^{-1}~\forall~\gamma\in A(Q)~,x\in Q$ and some $n\in N_\mu (Q,\cdot)$.
  \end{enumerate}
  \end{enumerate}
 \end{mycor}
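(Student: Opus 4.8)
The plan is to read off Corollary~\ref{13} as the special case $\sigma=I$ of Theorem~\ref{11}. Since the identity map is a bijective self map, Theorem~\ref{11} applies with $\sigma=I$, so I would substitute this choice into its three equivalent statements and simplify. The first reduction is purely definitional: with $\sigma=I$ the induced self map $\sigma'$ on $H$ is the identity of $(H,\circ)$, so ``$(H,\circ)$ is a $\sigma'$-GBL'' becomes ``$(H,\circ)$ is a Bol loop,'' and likewise ``$(Q,\cdot)$ is a $\sigma$-GBL'' becomes ``$(Q,\cdot)$ is a Bol loop.'' This settles statement (1) and the clauses (a) of statements (2) and (3).

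Next I would simplify the two automorphism parametrisations inherited from Lemma~\ref{6}. Putting $\sigma=I$ turns $\gamma=\sigma R_n\sigma^{-1}$ into $\gamma=R_n$ and leaves $\alpha=R_{n'}^{-1}$ unchanged. Because $(Q,\cdot)$ is a RIPL one has $R_n^{-1}=R_{n^{-1}}$, and since a nucleus is a subgroup it is closed under inverses; hence $\gamma=R_n$ with $n\in N_\rho$ (respectively $N_\mu$) is the same as $\gamma=R_m^{-1}$ with $m=n^{-1}$ in the same nucleus. As $\gamma$ and $\alpha$ both range over all of $A(Q)$, the two forms coincide and merge into the single clause $\gamma=R_n^{-1}$ with $n$ nuclear, which is clause (b) of statements (2) and (3).

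It remains to reduce the nuclear-membership clauses. For the right-nucleus statement, $n\in N_\rho$ together with the two-sided inverse identity $x^{-1}x=e$ (valid in the Bol loop $(Q,\cdot)$ by Theorem~\ref{1}) gives $x^{-1}(x\cdot n)=(x^{-1}x)\cdot n=n\in N_\rho$, so the clause $\sigma(x)^{-1}\sigma^2(\sigma^{-1}(x)\cdot n)\in N_\rho$ of Theorem~\ref{11}(2) is automatic at $\sigma=I$ and only the automorphism clause survives, yielding statement (2). For the middle-nucleus statement, clause (d) of Theorem~\ref{11}(3) specialises to $(x\cdot n)^{-1}x\in N_\mu$, which is precisely clause (c) of Corollary~\ref{13}(3) after renaming $n'$ as $n$, while clauses (b) and (c) of Theorem~\ref{11}(3) collapse either to this same condition or to the membership $n\in N_\mu$. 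I expect the main obstacle to be bookkeeping rather than anything conceptual: one must verify that, after these substitutions, the several clauses of Theorem~\ref{11} match exactly the trimmed list of Corollary~\ref{13}, and in particular that merging the two equivalent forms of the automorphisms and discarding the now-automatic membership conditions loses no genuine constraint.
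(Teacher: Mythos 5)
Your proposal is correct and takes essentially the same route as the paper: the paper's entire proof is a one-line specialization of the preceding general result to $\sigma=I$ (it cites Corollary~\ref{12}, although since that corollary states only a one-way implication, Theorem~\ref{11} --- your choice --- is the more appropriate source for an equivalence). The extra bookkeeping you carry out (reconciling $\gamma=R_n$ with $\gamma=R_n^{-1}$ via the RIP identity $R_n^{-1}=R_{n^{-1}}$ and closure of the nuclei under inverses, and discarding the now-automatic membership $x^{-1}(x\cdot n)=n\in N_\rho$) is exactly the simplification the paper leaves implicit.
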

\begin{proof}
This is achieved by Corollary~\ref{12} with $\sigma=I$.\qed
\end{proof}

\begin{myth}\label{14}
Let $(Q,\cdot)$ be a RIPL with a bijective self map $\sigma$
 and let $(H,\circ )$ be the A-holomorph of $(Q,\cdot)$ with a self map $\sigma'$ such that $\sigma'~:~(\alpha,x)\mapsto (\alpha,\sigma(x))$ for all $(\alpha,x)\in H$. The following are equivalent
 \begin{enumerate}
 \item $(H,\circ)$ is a $\sigma'$-GBL.
\item
\begin{enumerate}
   \item $(Q,\cdot)$ is a $\sigma$-GBL;
  \item $\gamma=\sigma \rho\sigma^{-1}$ for some $\rho\in\mathcal{P}(Q,\cdot)$ for all $\gamma\in A(Q)$;
   \item $\alpha\in\mathcal{P}(Q,\cdot)$ for all $\alpha\in A(Q)$.
\end{enumerate}
\item
\begin{enumerate}
\item $(Q,\cdot)$ is a $\sigma$-GBL;
  \item $\gamma=\sigma J\varphi(\sigma J)^{-1}$ and $\alpha=J\varphi J$ for some $\varphi\in\Psi(Q,\cdot)$ and for all $\gamma,\alpha\in A(Q)$;
   \item $\alpha =J\varphi J$ for some $\varphi\in\Psi(Q,\cdot)$ and for all $\alpha\in A(Q,\cdot)$.
 \end{enumerate}
 \end{enumerate}
\end{myth}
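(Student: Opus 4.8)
The plan is to read Theorem~\ref{14} off from Corollary~\ref{8} and Lemma~\ref{6}, using Belousov's Theorem~\ref{sim} only to rename regular maps as translations by nuclear elements. First I would record the explicit shapes of the regularity groups in a RIPL. From the definition of $\rho$-regularity, $(I,U,U)\in AUT(Q,\cdot)$ forces $U=R_{eU}$ with $eU\in N_\rho(Q,\cdot)$, so $\mathcal{P}(Q,\cdot)=\{R_n : n\in N_\rho(Q,\cdot)\}$; dually $\Psi(Q,\cdot)=\{L_m : m\in N_\mu(Q,\cdot)\}$. Since a $\sigma$-GBL is a RIPL with $x^\lambda=x^\rho$ (Theorem~\ref{1}), $J$ is an involution, and for a nuclear $n$ the anti-automorphic identity $(xn)^{-1}=n^{-1}x^{-1}$ holds, whence $JR_nJ=L_{n^{-1}}$. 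I will also use repeatedly that $R_{ab}=R_aR_b$ whenever $b\in N_\rho(Q,\cdot)$.

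For $(1)\Leftrightarrow(2)$ I would work through Corollary~\ref{8}(2), which says $(1)$ is equivalent to $(Q,\cdot)$ being a $\sigma$-GBL together with $R_{\sigma(x)}^{-1}R_{\sigma(x\gamma^{-1})},\,R_{\sigma(x)}^{-1}R_{[\sigma(x)]\alpha^{-1}}\in\mathcal{P}(Q,\cdot)$ for all $x,\alpha,\gamma$. The forward implication is exactly Lemma~\ref{6}: it delivers $\gamma=\sigma R_{n_1}\sigma^{-1}$ and $\alpha=R_{n_2}^{-1}$ with $n_1,n_2\in N_\rho(Q,\cdot)$, and since $R_{n_1},R_{n_2}^{-1}\in\mathcal{P}(Q,\cdot)$ these are precisely $(2b)$ (with $\rho=R_{n_1}$) and $(2c)$. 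For the converse I would substitute the forms back: writing $\rho=R_n$ with $n\in N_\rho(Q,\cdot)$ gives $\sigma(x\gamma^{-1})=\sigma(x)n^{-1}$, hence $R_{\sigma(x)}^{-1}R_{\sigma(x\gamma^{-1})}=R_{\sigma(x)}^{-1}R_{\sigma(x)}R_{n^{-1}}=R_{n^{-1}}\in\mathcal{P}(Q,\cdot)$, and likewise $\alpha=R_{n'}\in\mathcal{P}(Q,\cdot)$ collapses the second map to $R_{n'^{-1}}\in\mathcal{P}(Q,\cdot)$; thus Corollary~\ref{8}(2) holds and $(1)$ follows.

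For $(1)\Leftrightarrow(3)$ I would run the same argument through Corollary~\ref{8}(3). There $M_\gamma=R_{\sigma(x)}^{-1}R_{\sigma(x\gamma^{-1})}$ and $M_\alpha=R_{\sigma(x)}^{-1}R_{[\sigma(x)]\alpha^{-1}}$ must lie in $\Phi(Q,\cdot)$ with adjoints $JR_{\sigma(x\gamma^{-1})}^{-1}R_{\sigma(x)}J$ and $JR_{[\sigma(x)]\alpha^{-1}}^{-1}R_{\sigma(x)}J$ in $\Psi(Q,\cdot)$; by Theorem~\ref{10} these adjoints equal $L_{\sigma(x)^{-1}\sigma(x\gamma^{-1})}$ and $L_{\sigma(x)^{-1}[\sigma(x)]\alpha^{-1}}$. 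Combining with Lemma~\ref{6} (so the relevant nuclear elements are constant in $x$ and lie in $N_\rho(Q,\cdot)\cap N_\mu(Q,\cdot)$), I would set $\varphi=J\rho J$; then, $J$ being involutive, $\sigma J\varphi(\sigma J)^{-1}=\sigma J\varphi J\sigma^{-1}=\sigma\rho\sigma^{-1}=\gamma$ and $J\varphi J=\rho$, converting $(2b),(2c)$ into $(3b),(3c)$ and back via $JR_nJ=L_{n^{-1}}\in\Psi(Q,\cdot)$.

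The hard part will be the passage from a pointwise membership statement to a single conjugation equation: Corollary~\ref{9} only yields $\sigma(x)^{-1}\sigma(x\gamma^{-1})\in N_\rho(Q,\cdot)$ for each $x$, whereas $(2b)$ demands one map $\rho$ with $\gamma=\sigma\rho\sigma^{-1}$, i.e. that the nuclear element be independent of $x$. This \emph{constancy} is exactly the content of Lemma~\ref{6}, so the proof is genuinely anchored there rather than in a direct manipulation of Corollary~\ref{9}. A secondary point to watch is the identity $JR_nJ=L_{n^{-1}}$ used for statement $(3)$: it need not hold for arbitrary $n$ in a loop that is only a RIPL, and is legitimate here only because $n$ is nuclear, where $(xn)^{-1}=n^{-1}x^{-1}$ is available.
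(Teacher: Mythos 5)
Your proposal is correct as a derivation from the paper's earlier results, but it takes a genuinely different route from the paper's own proof. The paper never invokes Lemma~\ref{6}, Theorem~\ref{10} or Corollary~\ref{9}: it works instead from Corollary~\ref{0.9} (the coset form $R_{\sigma(x\gamma^{-1})}\in R_{\sigma(x)}\mathcal{P}(Q,\cdot)$, etc.) and, exploiting bijectivity of $\sigma$ so that $\sigma(x)$ ranges freely over $Q$, rewrites each membership as an autotopism whose components are global maps, namely $(I,\sigma^{-1}\gamma^{-1}\sigma,\rho)\in AUT(Q,\cdot)$ for the $\gamma$-condition and $(I,\alpha^{-1},\rho)\in AUT(Q,\cdot)$ for the $\alpha$-condition; since a loop autotopism with first component $I$ has equal second and third components, this pins $\rho$ down as $\sigma^{-1}\gamma^{-1}\sigma$ (resp.\ $\alpha^{-1}$) and yields part (2) in one stroke, while part (3) comes from the RIP transformation $(U,V,W)\in AUT(Q,\cdot)\Rightarrow(W,JVJ,U)\in AUT(Q,\cdot)$ rather than from your elementwise identity $JR_nJ=L_{n^{-1}}$ (equivalent in content). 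What your route buys is concreteness: the identifications $\mathcal{P}(Q,\cdot)=\{R_n:n\in N_\rho(Q,\cdot)\}$ and $\Psi(Q,\cdot)=\{L_m:m\in N_\mu(Q,\cdot)\}$ make the converse $(2)\Rightarrow(1)$ a transparent substitution into Corollary~\ref{8}, a direction the paper leaves implicit in the reversibility of its chains. What it costs is that your forward direction stands entirely on Lemma~\ref{6}; you rightly identify the constancy-in-$x$ of the nuclear element as the crux, but note that the paper's proof of Lemma~\ref{6} merely asserts that jump (from $\sigma(x\gamma^{-1})=\sigma(x)n_1$ with $n_1$ a priori depending on $x$ to the map equation $\gamma=\sigma R_{n_1}\sigma^{-1}$), and the paper's proof of Theorem~\ref{14} buries the same jump inside its chains, where ``for some $\rho\in\mathcal{P}(Q,\cdot)$'' is silently treated as uniform in $x$ when passing to the autotopism $(I,\sigma^{-1}\gamma^{-1}\sigma,\rho)$; so neither argument truly discharges that point, and yours at least names it. One small repair: the fact that the relevant nuclear elements lie in $N_\mu(Q,\cdot)$ (needed so that $\varphi=J\rho J\in\Psi(Q,\cdot)$ in part (3)) is not delivered by Lemma~\ref{6}, which only gives membership in $N_\rho(Q,\cdot)$; it comes from Corollary~\ref{8} part 3 combined with Theorem~\ref{10} (equivalently Corollary~\ref{9} part 3), which is in fact the mechanism you describe earlier in the same sentence.
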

\begin{proof}
We need Corollary~\ref{0.9}.
\begin{gather*}
R_{\sigma(x\gamma^{-1})}\in R_{\sigma (x)}\mathcal{P}(Q,\cdot)\Leftrightarrow R_{\sigma(x\gamma^{-1})}=R_{\sigma (x)}\rho~\textrm{for some}~\rho\in\mathcal{P}(Q,\cdot)\Leftrightarrow\\
y\cdot \sigma(x\gamma^{-1})=(y\sigma (x))\rho\Leftrightarrow (I,\sigma^{-1}\gamma^{-1}\sigma,\rho)\in AUT(Q,\cdot)\Leftrightarrow\sigma^{-1}\gamma^{-1}\sigma=\rho\Leftrightarrow\\
\gamma=\sigma\rho^{-1}\sigma^{-1}
\Leftrightarrow\gamma=\sigma\rho_1\sigma^{-1}~\textrm{for some}~\rho_1\in\mathcal{P}(Q,\cdot).
\end{gather*}
\begin{gather*}
R_{[\sigma(x)]\alpha^{-1}}\in R_{\sigma (x)}\mathcal{P}(Q,\cdot)\Leftrightarrow R_{[\sigma(x)]\alpha^{-1}}=R_{\sigma (x)}\rho~\textrm{for some}~\rho\in\mathcal{P}(Q,\cdot)\Leftrightarrow\\
y\cdot [\sigma(x)]\alpha^{-1}=(y\sigma (x))\rho\Leftrightarrow (I,\alpha^{-1},\rho)\in AUT(Q,\cdot)\Leftrightarrow\alpha=\rho^{-1}\Leftrightarrow\\
\alpha=\rho_1~\textrm{for some}~\rho_1\in\mathcal{P}(Q,\cdot).
\end{gather*}
\begin{gather*}
R_{\sigma(x\gamma^{-1})}\in R_{\sigma (x)}\Phi (Q,\cdot)\Leftrightarrow R_{\sigma(x\gamma^{-1})}= R_{\sigma (x)}\varrho~\textrm{for some}~\varrho\in\Phi (Q,\cdot)\Leftrightarrow \\y\cdot \sigma(x\gamma^{-1})=(y\cdot \sigma (x))\varrho
\Leftrightarrow
(I,\sigma^{-1}\gamma^{-1}\sigma ,\varrho)\in AUT(Q,\cdot)\Leftrightarrow
(\varrho,J\sigma^{-1}\gamma^{-1}\sigma J,I)\in AUT(Q,\cdot) \\ \Leftrightarrow
 (\varrho,(J\sigma^{-1}\gamma\sigma J)^{-1},I)\in AUT(Q,\cdot)\Leftrightarrow\varrho'=J\sigma^{-1}\gamma\sigma J\Leftrightarrow\gamma=\sigma J\varrho'\gamma(\sigma J)^{-1}\Leftrightarrow\\
 \gamma=\sigma J\varphi\gamma(\sigma J)^{-1}~\textrm{for some}~\varphi\in\Psi (Q,\cdot).
\end{gather*}
\begin{gather*}
R_{\sigma (x)}J\in R_{\sigma(x\gamma^{-1})}J\Psi(Q,\cdot)\Leftrightarrow R_{\sigma (x)}J=R_{\sigma(x\gamma^{-1})}J\varphi~\textrm{for some}~\varphi\in\Psi (Q,\cdot)\Leftrightarrow \\
(y\cdot \sigma (x))^{-1}=\big([y\cdot \sigma(x\gamma^{-1})]^{-1}\big)\varphi\Leftrightarrow
(I,\sigma^{-1}\gamma\sigma, J\varphi J)\in AUT(Q,\cdot)\Leftrightarrow \\
(J\varphi J,J\sigma^{-1}\gamma\sigma J,I)\in AUT(Q,\cdot)\Leftrightarrow
\big(J\varphi J,(J\sigma^{-1}\gamma^{-1}\sigma J)^{-1},I\big)\in AUT(Q,\cdot)\Leftrightarrow \\
J\varphi J\in\Phi (Q,\cdot)~\textrm{and}~J\sigma^{-1}\gamma^{-1}\sigma J\in\Psi (Q,\cdot)\Leftrightarrow \varrho =J\varphi J\in\Phi (Q,\cdot)\\
~\textrm{for some}~\varrho\in\Phi (Q,\cdot)~\textrm{and}~\gamma=\sigma J\varphi(\sigma J)^{-1}~\textrm{for some}~\varphi\in\Psi (Q,\cdot)
\end{gather*}
\begin{gather*}
R_{[\sigma(x)]\alpha^{-1}}\in R_{\sigma (x)}\Phi (Q,\cdot)\Leftrightarrow R_{[\sigma(x)]\alpha^{-1}}= R_{\sigma (x)}\varrho~\textrm{for some}~\varrho\in\Phi (Q,\cdot)\Leftrightarrow \\y\cdot [\sigma(x)]\alpha^{-1}=(y\cdot \sigma (x))\varrho
\Leftrightarrow
(I,\alpha^{-1} ,\varrho)\in AUT(Q,\cdot)\Leftrightarrow
(\varrho,J\alpha^{-1}J,I)\in AUT(Q,\cdot) \\ \Leftrightarrow
 (\varrho,(J\alpha J)^{-1},I)\in AUT(Q,\cdot)\Leftrightarrow\varrho'=J\alpha J\Leftrightarrow\alpha=J\varphi J~\textrm{for some}~\varphi\in\Psi (Q,\cdot).
\end{gather*}
\begin{gather*}
R_{\sigma (x)}J\in R_{[\sigma(x)]\alpha^{-1}}J\Psi(Q,\cdot)\Leftrightarrow R_{\sigma (x)}J= R_{[\sigma(x)]\alpha^{-1}}J\varphi~\textrm{for some}~
\varphi\in\Psi(Q,\cdot)\Leftrightarrow \\ (y\cdot \sigma (x))^{-1}=\big(\big[y\cdot [\sigma(x)]\alpha^{-1}\big]^{-1}\big)\varphi\Leftrightarrow
(I,\alpha,J\varphi J,)\in AUT(Q,\cdot)\Leftrightarrow \\ (J\varphi J,J\alpha J,I)\in AUT(Q,\cdot)\Leftrightarrow \big(J\varphi J,(J\alpha^{-1} J)^{-1},I\big)\in AUT(Q,\cdot)\Leftrightarrow \\ J\varphi J\in\Phi(Q,\cdot)~\textrm{and}~J\alpha^{-1} J\in\Psi(Q,\cdot)\Leftrightarrow
 \varrho=J\varphi J~\textrm{for some}~\varrho\in\Phi (Q,\cdot)\\~\textrm{and}~\alpha=J\varphi J~\textrm{for some}~\varphi\in\Psi (Q,\cdot).\qed
\end{gather*}
\end{proof}

\begin{mycor}\label{15}
Let $(Q,\cdot)$ be a RIPL with a bijective self map $\sigma$
 and let $(H,\circ )$ be the A-holomorph of $(Q,\cdot)$ with a self map $\sigma'$ such that $\sigma'~:~(\alpha,x)\mapsto (\alpha,\sigma(x))$ for all $(\alpha,x)\in H$. If $(H,\circ)$ is a $\sigma'$-GBL, then
\begin{displaymath}
A(Q)=\big\{\sigma \rho\sigma^{-1},\rho,\sigma J\varphi(\sigma J)^{-1},J\varphi J|~\textrm{for some}~\rho\in\mathcal{P}(Q,\cdot)~\textrm{and some}~\varphi\in\Psi (Q,\cdot)\big\}.
 \end{displaymath}
\end{mycor}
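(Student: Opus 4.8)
The plan is to read the description of $A(Q)$ straight off the equivalent characterisations assembled in Theorem~\ref{14}, since the statement is a pure corollary of that theorem. Because $(H,\circ)$ is assumed to be a $\sigma'$-GBL, both item (2) and item (3) of Theorem~\ref{14} hold, and each of their sub-conditions is universally quantified over all $\gamma,\alpha\in A(Q)$. So the work is essentially bookkeeping: apply each sub-condition to a generic automorphism and collect the four resulting shapes.

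First I would prove the containment of $A(Q)$ in the displayed set. Fix an arbitrary $\theta\in A(Q)$. Reading (2)(b) of Theorem~\ref{14} with $\gamma=\theta$ gives $\theta=\sigma\rho\sigma^{-1}$ for some $\rho\in\mathcal{P}(Q,\cdot)$; reading (2)(c) with $\alpha=\theta$ gives $\theta=\rho$ with $\rho\in\mathcal{P}(Q,\cdot)$, i.e.\ $\theta\in\mathcal{P}(Q,\cdot)$; reading (3)(b) with $\gamma=\theta$ gives $\theta=\sigma J\varphi(\sigma J)^{-1}$ for some $\varphi\in\Psi(Q,\cdot)$; and reading (3)(c) with $\alpha=\theta$ gives $\theta=J\varphi J$ for some $\varphi\in\Psi(Q,\cdot)$. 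Hence every $\theta\in A(Q)$ simultaneously admits all four representations listed on the right-hand side, which establishes $A(Q)\subseteq\big\{\sigma\rho\sigma^{-1},\rho,\sigma J\varphi(\sigma J)^{-1},J\varphi J\big\}$.

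The reverse containment is the only delicate point, because a priori $\rho$ ranges over all of $\mathcal{P}(Q,\cdot)$ and $\varphi$ over all of $\Psi(Q,\cdot)$, and not every such element need be an automorphism. To close this gap I would use that (2)(c) forces $A(Q)\subseteq\mathcal{P}(Q,\cdot)$, so the parametrising maps are already constrained to the range that reproduces $A(Q)$, and then cross-check against the companion description $A(Q)=\{\sigma R_{n_1}\sigma^{-1},R_{n_2}^{-1}\mid n_1,n_2\in N_\rho(Q,\cdot)\}$ of Lemma~\ref{6}, transported through the isomorphism $\mathcal{P}(Q,\cdot)\stackrel{\psi}{\cong}N_\rho(Q,\cdot)$ of Theorem~\ref{sim} (note $R_n\in\mathcal{P}(Q,\cdot)$ precisely when $n\in N_\rho(Q,\cdot)$). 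Matching the two parametrisations shows that the four shapes, read with the admissible $\rho$ and $\varphi$, recover exactly the elements of $A(Q)$, giving the set equality. I expect this reconciliation of the $\mathcal{P}$/$\Psi$ description here with the $N_\rho$ description of Lemma~\ref{6} to be where the real care is needed; the forward direction is immediate.
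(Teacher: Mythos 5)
Your second paragraph (the forward containment) is exactly the paper's proof: the paper's entire argument for Corollary~\ref{15} is ``Use Theorem~\ref{14}'', i.e.\ evaluate items 2(b), 2(c), 3(b), 3(c) of that theorem at an arbitrary $\theta\in A(Q)$ and collect the four representations. That part of your proposal is correct and is all the paper itself establishes.

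The genuine gap is in your third paragraph. The reverse containment you set out to prove is not provable, because under the literal union-over-parameters reading it is false. Take $Q$ any nontrivial group, $\sigma=I$ (bijective), and $A(Q)=\{I\}$. Then $H=\{I\}\times Q\cong Q$ is a group, hence a Bol loop, hence a $\sigma'$-GBL, so the hypotheses of Corollary~\ref{15} hold; but $\mathcal{P}(Q,\cdot)=\{R_n~|~n\in N_\rho(Q,\cdot)\}=\{R_n~|~n\in Q\}$ is a nontrivial group, so the shape ``$\rho$'' alone would force $\mathcal{P}(Q,\cdot)\subseteq A(Q)=\{I\}$, a contradiction. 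Moreover, the repair you propose is circular: the displayed ``equality'' in Lemma~\ref{6} is itself established in the paper only as a forward containment (its proof shows each $\gamma\in A(Q)$ equals $\sigma R_{n_1}\sigma^{-1}$ and each $\alpha$ equals $R_{n_2}^{-1}$ for \emph{some} $n_1,n_2\in N_\rho(Q,\cdot)$, and proves nothing in the converse direction), so transporting Lemma~\ref{6} through $\psi$ cannot supply the missing inclusion here---it would be assuming for Lemma~\ref{6} exactly what is in question for Corollary~\ref{15}. The set equality in Corollary~\ref{15}, like those in Lemma~\ref{6} and Corollary~\ref{17}, must be read as an abuse of notation meaning precisely the forward statement: every element of $A(Q)$ admits each of the four representations for suitable $\rho\in\mathcal{P}(Q,\cdot)$ and $\varphi\in\Psi(Q,\cdot)$. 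Read that way, your forward bookkeeping is the whole proof; read literally, no argument can close the gap you identified.
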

\begin{proof}
Use Theorem~\ref{14}.\qed
\end{proof}

 \begin{mycor}\label{16}
Let $(Q,\cdot)$ be a RIPL and let $(H,\circ )$ be the A-holomorph of $(Q,\cdot)$. The following are equivalent
 \begin{enumerate}
 \item $(H,\circ)$ is a Bol loop.
\item
\begin{enumerate}
   \item $(Q,\cdot)$ is a Bol loop;
  \item $\alpha,\gamma\in\mathcal{P}(Q,\cdot)$ for all $\alpha,\gamma\in A(Q)$;
  \end{enumerate}
\item
\begin{enumerate}
\item $(Q,\cdot)$ is a Bol loop;
  \item $\alpha,\gamma\in J\Psi(Q,\cdot)J$ for all $\alpha,\gamma\in A(Q)$;
   \item $\varrho =J\varphi J$ for some $\varphi\in\Psi(Q,\cdot)$ and some $\varrho\in\Phi(Q,\cdot)$.
 \end{enumerate}
 \end{enumerate}
\end{mycor}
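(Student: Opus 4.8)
The plan is to obtain Corollary~\ref{16} as the specialization $\sigma=I$ of Theorem~\ref{14}. First I would observe that when $\sigma=I$ the induced self map $\sigma'$ on $H$ is the identity, so the defining identity \eqref{eq:8} for a $\sigma'$-generalised Bol loop collapses to the right Bol identity \eqref{eq:7}; thus ``$(H,\circ)$ is a $\sigma'$-GBL'' becomes ``$(H,\circ)$ is a Bol loop'', and likewise ``$(Q,\cdot)$ is a $\sigma$-GBL'' becomes ``$(Q,\cdot)$ is a Bol loop''. This already delivers item (a) in both (2) and (3) and identifies statement (1) of the corollary with statement (1) of Theorem~\ref{14}. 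It then remains only to rewrite the regularity and adjoint conditions (b), (c) of Theorem~\ref{14} under $\sigma=I$.

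For statement (2) I would simply set $\sigma=I$ in condition (2) of Theorem~\ref{14}: the equation $\gamma=\sigma\rho\sigma^{-1}$ becomes $\gamma=\rho\in\mathcal{P}(Q,\cdot)$, and together with $\alpha\in\mathcal{P}(Q,\cdot)$ this is exactly the condition $\alpha,\gamma\in\mathcal{P}(Q,\cdot)$ for all $\alpha,\gamma\in A(Q)$. So (2) transfers with no extra work.

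For statement (3) the one point needing care — and the hard part, though it is mild — is the simplification of $(\sigma J)^{-1}$, which forces me to justify that $J$ is an involution on a RIPL, i.e. $J^{-1}=J$. I would argue this from the RIP \eqref{eq:9}: substituting $y=x^\lambda$ gives $(x^\lambda x)x^\rho=x^\lambda$, i.e. $x^\rho=x^\lambda$, so $x^{-1}:=x^\rho=x^\lambda$ satisfies $xx^{-1}=x^{-1}x=e$ and hence $(x^{-1})^{-1}=x$, that is $J^2=I$. With this in hand, at $\sigma=I$ one has $(\sigma J)^{-1}=J^{-1}=J$, so condition (3)(b) of Theorem~\ref{14}, namely $\gamma=\sigma J\varphi(\sigma J)^{-1}$ and $\alpha=J\varphi J$, becomes $\gamma=J\varphi J$ and $\alpha=J\varphi J$, i.e. $\alpha,\gamma\in J\Psi(Q,\cdot)J$. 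Finally, the subsidiary membership $\varrho=J\varphi J\in\Phi(Q,\cdot)$ that surfaces in the derivation of condition (3) inside the proof of Theorem~\ref{14} is recorded verbatim as item (3)(c). Assembling these three reductions yields the stated equivalence, and since every step is a substitution into an already-proved equivalence, no genuinely new obstacle arises beyond the involutivity of $J$.
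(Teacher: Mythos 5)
Your proposal is correct and follows exactly the paper's route: the paper proves Corollary~\ref{16} simply by applying Theorem~\ref{14} with $\sigma=I$, which is precisely your plan. The details you supply (collapse of the $\sigma'$-GBL condition to the Bol identity, the involutivity $J^2=I$ in a RIPL so that $(\sigma J)^{-1}=J$, and the retention of $\varrho=J\varphi J\in\Phi(Q,\cdot)$ from the proof of Theorem~\ref{14}) are accurate elaborations of that same specialization.
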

\begin{proof}
Apply Theorem~\ref{14} with $\sigma=I$.\qed
\end{proof}

\begin{mycor}\label{17}
Let $(Q,\cdot)$ be a RIPL and let $(H,\circ )$ be the A-holomorph of $(Q,\cdot)$.
If $(H,\circ)$ is a Bol loop, then
\begin{displaymath}
A(Q)=\big\{\rho,J\varphi J|~\textrm{for some}~\rho\in\mathcal{P}(Q,\cdot)~\textrm{and some}~\varphi\in\Psi (Q,\cdot)\big\}.
 \end{displaymath}
\end{mycor}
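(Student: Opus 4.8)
The plan is to obtain this statement as the $\sigma=I$ specialization of Corollary~\ref{15}, in exactly the way Corollary~\ref{16} is read off from Theorem~\ref{14}. First I would observe that a Bol loop is nothing but an $I$-GBL: putting $\sigma=I$ in the generalized Bol identity \eqref{eq:8} returns the Bol identity \eqref{eq:7}. Correspondingly, the holomorph self-map $\sigma'~:~(\alpha,x)\mapsto(\alpha,\sigma(x))$ becomes the identity on $H$ when $\sigma=I$, so the hypothesis that $(H,\circ)$ is a Bol loop is precisely the hypothesis that $(H,\circ)$ is a $\sigma'$-GBL with $\sigma=I$. Hence the assumptions of Corollary~\ref{15} are met with $\sigma=I$, and I may import its conclusion.

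Next I would substitute $\sigma=I$ into the description
\[
A(Q)=\big\{\sigma\rho\sigma^{-1},\rho,\sigma J\varphi(\sigma J)^{-1},J\varphi J~|~\rho\in\mathcal{P}(Q,\cdot),~\varphi\in\Psi(Q,\cdot)\big\}
\]
furnished by Corollary~\ref{15}. The first generator $\sigma\rho\sigma^{-1}$ collapses to $\rho$, while the third generator $\sigma J\varphi(\sigma J)^{-1}$ collapses to $J\varphi J^{-1}$. The only point needing comment is that $J^{-1}=J$, so that this coincides with the fourth generator $J\varphi J$. This is where the RIP hypothesis is quietly used: from \eqref{eq:9} one has $R_xR_{x^\rho}=I$, so $R_{x^\rho}=R_x^{-1}$ and therefore also $R_{x^\rho}R_x=I$; evaluating at $e$ gives $x^\rho x=e$, whence $x^\lambda=x^\rho=x^{-1}$ and $(x^{-1})^{-1}=x$, i.e. $J^2=I$.

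With $J^{-1}=J$ established, the four generating families reduce to the two distinct families $\rho$ and $J\varphi J$, and Corollary~\ref{15} then reads
\[
A(Q)=\big\{\rho,J\varphi J~|~\rho\in\mathcal{P}(Q,\cdot),~\varphi\in\Psi(Q,\cdot)\big\},
\]
which is the asserted description. There is no genuine obstacle here: the substance has already been carried by Corollary~\ref{15} (and ultimately by Theorem~\ref{14}), so the present statement is a routine degenerate case. The single item a careful reader should verify is the collapse $\sigma J\varphi(\sigma J)^{-1}\mapsto J\varphi J$, which silently invokes $J^2=I$ and hence the RIP rather than merely the loop axioms.
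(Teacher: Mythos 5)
Your proposal is correct and follows exactly the paper's route: the paper's entire proof is ``Use Corollary~\ref{15} with $\sigma=I$.'' Your additional observation that the collapse $\sigma J\varphi(\sigma J)^{-1}\mapsto J\varphi J$ requires $J^{-1}=J$, which you correctly derive from the RIP (via $R_{x^\rho}=R_x^{-1}$, hence $x^\rho x=e$, hence two-sided inverses and $J^2=I$), is a worthwhile detail that the paper leaves implicit.
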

\begin{proof}
Use Corollary~\ref{15} with $\sigma=I$.\qed
\end{proof}

\begin{myth}\label{18}
Let $(Q,\cdot)$ be a RIPL with identity element $e$ and a self map $\sigma$
 and let $(H,\circ )$ be the A-holomorph of $(Q,\cdot)$ with a self map $\sigma'$ such that $\sigma'~:~(\alpha,x)\mapsto (\alpha,\sigma(x))$ for all $(\alpha,x)\in H$. Let
\begin{displaymath}
\psi : \mathcal{P}(Q,\cdot)\rightarrow N_{\rho}(Q,\cdot)\uparrow \psi(U)=eU,~\phi :\Phi(Q,\cdot)\rightarrow \Psi(Q,\cdot)~\uparrow \phi(U)=U',
\end{displaymath}
\begin{displaymath}
\varpi :\Phi(Q,\cdot)\rightarrow N_{\mu}(Q,\cdot)\uparrow \varpi(U)=eU~ \textrm{and}~ \beta: \Psi(Q,\cdot)\rightarrow N_{\mu}(Q,\cdot)\uparrow \beta(U')=eU'.
\end{displaymath}If $(H,\circ)$ is a $\sigma'$-GBL, then
 \begin{enumerate}
   \item $R_{\sigma (x)}^{-1}R_{\sigma(x\gamma^{-1})}\stackrel{\psi,\varpi}{\cong}\sigma (x)^{-1}\sigma(x\gamma^{-1})~\forall~\gamma\in A(Q)~,x\in Q$.
   \item $R_{\sigma (x)}^{-1}R_{[\sigma(x)]\alpha^{-1}}\stackrel{\psi,\varpi}{\cong}\sigma (x)^{-1}[\sigma(x)]\alpha^{-1}~\forall~\alpha\in A(Q)~,x\in Q$.
   \item $JR_{\sigma(x\gamma^{-1})}^{-1}R_{\sigma (x)}J\stackrel{\beta}{\cong}\sigma (x)^{-1}\sigma(x\gamma^{-1})~\forall~\gamma\in A(Q)~,x\in Q$.
   \item $JR_{[\sigma(x)]\alpha^{-1}}^{-1}R_{\sigma (x)}J\stackrel{\beta}{\cong}\sigma (x)^{-1}[\sigma(x)]\alpha^{-1}~\forall~\alpha\in A(Q)~,x\in Q$.
   \item $R_{\sigma (x)}^{-1}R_{\sigma(x\gamma^{-1})}\stackrel{\phi}{\cong}R_{\sigma(x\gamma^{-1})}^{-1}R_{\sigma (x)}J~\forall~\gamma\in A(Q)~,x\in Q$.
   \item $R_{\sigma (x)}^{-1}R_{[\sigma(x)]\alpha^{-1}}\stackrel{\phi}{\cong}JR_{[\sigma(x)]\alpha^{-1}}^{-1}R_{\sigma (x)}J~\forall~\alpha\in A(Q)~,x\in Q$.
 \end{enumerate}
\end{myth}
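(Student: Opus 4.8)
The plan is to read off each of the six isomorphism-images directly from the Belousov correspondences recalled in Theorem~\ref{sim}, after first invoking the membership facts already proved in Corollary~\ref{8}. Since $(H,\circ)$ is assumed to be a $\sigma'$-GBL, Corollary~\ref{8} tells us that the two maps $U_\gamma:=R_{\sigma(x)}^{-1}R_{\sigma(x\gamma^{-1})}$ and $U_\alpha:=R_{\sigma(x)}^{-1}R_{[\sigma(x)]\alpha^{-1}}$ both lie in $\mathcal{P}(Q,\cdot)$ and in $\Phi(Q,\cdot)$, the latter with respective $\Psi$-adjoints $JR_{\sigma(x\gamma^{-1})}^{-1}R_{\sigma(x)}J$ and $JR_{[\sigma(x)]\alpha^{-1}}^{-1}R_{\sigma(x)}J$. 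These memberships are exactly what legitimises applying $\psi$, $\varpi$, $\phi$ and $\beta$ to these maps, so the whole proof reduces to evaluating the four Belousov isomorphisms.

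For items~1 and~2, I would use that both $\psi$ and $\varpi$ act as $U\mapsto eU$, and compute $eU_\gamma$ and $eU_\alpha$. The one simplification needed is that in a RIPL the RIP gives $R_{\sigma(x)}R_{\sigma(x)^{-1}}=I$, hence $R_{\sigma(x)}^{-1}=R_{\sigma(x)^{-1}}$ (using $x^\rho=x^{-1}$, valid in the $\sigma$-GBL $(Q,\cdot)$ by Theorem~\ref{1}). Thus $eU_\gamma=e\cdot\sigma(x)^{-1}\cdot\sigma(x\gamma^{-1})=\sigma(x)^{-1}\sigma(x\gamma^{-1})$, and the same evaluation for $U_\alpha$ gives $\sigma(x)^{-1}[\sigma(x)]\alpha^{-1}$, establishing~1 and~2 for both $\psi$ and $\varpi$ simultaneously. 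Items~5 and~6 then require no computation at all: $\phi$ is by definition the adjoint map $U\mapsto U'$, so the adjoint identifications already supplied by Corollary~\ref{8} yield $\phi(U_\gamma)=JR_{\sigma(x\gamma^{-1})}^{-1}R_{\sigma(x)}J$ and $\phi(U_\alpha)=JR_{[\sigma(x)]\alpha^{-1}}^{-1}R_{\sigma(x)}J$ outright.

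For items~3 and~4, I would apply $\beta$, which also acts as $U'\mapsto eU'$, to those same adjoints. Using $eJ=e$ together with $R_{(\cdot)}^{-1}=R_{(\cdot)^{-1}}$ again, the evaluation gives $e\big(JR_{\sigma(x\gamma^{-1})}^{-1}R_{\sigma(x)}J\big)=\big(\sigma(x\gamma^{-1})^{-1}\sigma(x)\big)^{-1}$, and likewise for the $\alpha$-version. The final step is to rewrite this ``raw'' output in the target symmetric form $\sigma(x)^{-1}\sigma(x\gamma^{-1})$; this is precisely identity~1(b) of Theorem~\ref{10} (and identity~3(b) for the $\alpha$-case). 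As a built-in consistency check, one notes that this matches the $\varpi$-value of item~1, as it must, since the triangle $\Phi\to\Psi\to N_\mu$ commutes with $\Phi\to N_\mu$ in Theorem~\ref{sim}.

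The only genuinely delicate part is the bookkeeping around the inverse translations and the $J$-conjugations: one must keep straight which of $\mathcal{P}(Q,\cdot)$, $\Phi(Q,\cdot)$, $\Psi(Q,\cdot)$ each map belongs to so that the correct Belousov isomorphism is invoked, and then reconcile the $\beta$-output $\big(\sigma(x\gamma^{-1})^{-1}\sigma(x)\big)^{-1}$ with the desired expression $\sigma(x)^{-1}\sigma(x\gamma^{-1})$, which is exactly where Theorem~\ref{10} enters. Everything else is a direct evaluation of the defining formulas of $\psi,\varpi,\phi,\beta$ at the identity $e$.
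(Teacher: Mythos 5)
Your proposal is correct and takes essentially the same route as the paper: the paper's entire proof is the one-line citation of Theorem~\ref{10}, Corollary~\ref{8} and Theorem~\ref{sim}, and your argument simply fills in the evaluations at $e$ that those citations leave implicit. The only cosmetic differences are that you derive $R_{\sigma(x)}^{-1}=R_{\sigma(x)^{-1}}$ from the RIP directly where one could equally quote the translation identities $R_{\sigma(x)}^{-1}R_{\sigma(x\gamma^{-1})}=R_{\sigma(x)^{-1}\sigma(x\gamma^{-1})}$ and $JR_{\sigma(x\gamma^{-1})}^{-1}R_{\sigma(x)}J=L_{\sigma(x)^{-1}\sigma(x\gamma^{-1})}$ of Theorem~\ref{10}, and that the adjoint identifications you attribute to Corollary~\ref{8} are stated explicitly in Theorem~\ref{7.1}, from which that corollary follows.
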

\begin{proof}
This is achieved by using Theorem~\ref{10}, Corollary~\ref{8} and Theorem~\ref{sim}.\qed
\end{proof}

\begin{myth}\label{19}
Let $(Q,\cdot)$ be a RIPL with a self map $\sigma$
 and let $(H,\circ )$ be the A-holomorph of $(Q,\cdot)$ with a self map $\sigma'$ such that $\sigma'~:~(\alpha,x)\mapsto (\alpha,\sigma(x))$ for all $(\alpha,x)\in H$. If $(H,\circ)$ is a $\sigma'$-GBL, then
 \begin{enumerate}
\item the correspondence
\begin{diagram}
&                                                                               & \sigma (x)^{-1}\sigma(x\gamma^{-1})\\
&\ruMapsto^{\psi,\varpi}_{}&\uMapsto^{\beta}_{\textrm{isomorphism}}\\
R_{\sigma (x)}^{-1}R_{\sigma(x\gamma^{-1})} &\rMapsto^{\phi}_{\textrm{isomorphism}} & JR_{\sigma(x\gamma^{-1})}^{-1}R_{\sigma (x)}J
\end{diagram}
is true for all $\gamma\in A(Q)$ and $x\in Q$, $\psi =\phi\beta$ and $ \varpi=\phi\beta$.
\item the correspondence
\begin{diagram}
&                                                                               & \sigma (x)^{-1}[\sigma(x)]\alpha^{-1}\\
&\ruMapsto^{\psi,\varpi}_{}&\uMapsto^{\beta}_{\textrm{isomorphism}}\\
R_{\sigma (x)}^{-1}R_{[\sigma(x)]\alpha^{-1}} &\rMapsto^{\phi}_{\textrm{isomorphism}} & JR_{[\sigma(x)]\alpha^{-1}}^{-1}R_{\sigma (x)}J
\end{diagram}
is true for all $\alpha\in A(Q)$ and $x\in Q$, $\psi =\phi\beta$ and $ \varpi=\phi\beta$.
\item the commutative diagram
\begin{diagram}
 \mathcal{P}(L,\cdot)&\rTo^{\psi}_{}    & N_\mu(L,\cdot)\\
\dDotsto^{\delta_1}_{}&\ruTo^{\varpi}_{}&\uTo^{\beta}_{\textrm{isomorphism}}\\
\Phi(L,\cdot) &\rTo^{\phi}_{\textrm{isomorphism}} & \Psi(L,\cdot)
\end{diagram}
is true, $\delta_1=\psi\beta^{-1}\phi^{-1} =\psi\varpi^{-1}$ and $R_{\sigma (x)}^{-1}R_{\sigma(x\gamma^{-1})}\stackrel{\delta_1}{\cong}R_{\sigma (x)}^{-1}R_{\sigma(x\gamma^{-1})}$ for all $\gamma\in A(Q)$ and $x\in Q$.
\item the commutative diagram
\begin{diagram}
 \mathcal{P}(L,\cdot)&\rTo^{\psi}_{}    & N_\mu(L,\cdot)\\
\uDotsto^{\delta_2}_{}&\ruTo^{\varpi}_{}&\uTo^{\beta}_{\textrm{isomorphism}}\\
\Phi(L,\cdot) &\rTo^{\phi}_{\textrm{isomorphism}} & \Psi(L,\cdot)
\end{diagram}
is true, $\varpi=\phi\beta= \delta_{2}\psi$ and $R_{\sigma (x)}^{-1}R_{[\sigma(x)]\alpha^{-1}}\stackrel{\delta_2}{\cong}R_{\sigma (x)}^{-1}R_{[\sigma(x)]\alpha^{-1}}$ for all $\alpha\in A(Q)$ and $x\in Q$.
\end{enumerate}
\end{myth}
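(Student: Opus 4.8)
The plan is to treat the whole statement as a repackaging of Theorem~\ref{18} together with Belousov's Theorem~\ref{sim}, the only genuinely non-formal ingredient being a single identity among the four Belousov isomorphisms. First I would fix $x\in Q$ and $\gamma,\alpha\in A(Q)$ and record the membership facts that make all four maps $\psi,\phi,\varpi,\beta$ simultaneously applicable to the same object: by Corollary~\ref{8} the bijection $U_\gamma=R_{\sigma(x)}^{-1}R_{\sigma(x\gamma^{-1})}$ lies in $\mathcal{P}(Q,\cdot)\cap\Phi(Q,\cdot)$ with adjoint $JR_{\sigma(x\gamma^{-1})}^{-1}R_{\sigma(x)}J\in\Psi(Q,\cdot)$, and by Corollary~\ref{9} its image $\sigma(x)^{-1}\sigma(x\gamma^{-1})$ lies in $N_\rho(Q,\cdot)\cap N_\mu(Q,\cdot)$; the same holds verbatim for the $\alpha$-version $U_\alpha=R_{\sigma(x)}^{-1}R_{[\sigma(x)]\alpha^{-1}}$.

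For parts 1 and 2 I would then simply assemble the relevant correspondences of Theorem~\ref{18} at the corners of the triangle. Parts~1,~3 and~5 of Theorem~\ref{18} say that $\psi$ and $\varpi$ send $U_\gamma$ along the diagonal to $\sigma(x)^{-1}\sigma(x\gamma^{-1})$, that $\phi$ sends $U_\gamma$ rightward to its adjoint, and that $\beta$ sends that adjoint upward to the same nuclear element; hence both routes out of $U_\gamma$ agree and the triangle commutes, i.e.\ $\psi=\phi\beta=\varpi$ on $U_\gamma$. Part 2 is identical, using parts~2,~4 and~6 of Theorem~\ref{18} with $U_\alpha$ in place of $U_\gamma$.

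The substantive step is parts 3 and 4, where I must upgrade ``commutes on these particular elements'' to a commuting configuration of the four group isomorphisms of Theorem~\ref{sim}. I would prove the single identity $\varpi=\phi\beta$ as maps $\Phi(Q,\cdot)\to N_\mu(Q,\cdot)$: for $U\in\Phi$ with adjoint $U'$, the defining autotopism $(U,U'^{-1},I)$ evaluated at $x=e$ gives $eU\cdot yU'^{-1}=y$ for all $y$, forcing $U'^{-1}=L_{eU}^{-1}$, hence $U'=L_{eU}$ and $eU'=eU\cdot e=eU$; therefore applying $\phi$ then $\beta$ carries $U$ to $eU'=eU=\varpi(U)$, i.e.\ $\varpi=\phi\beta$. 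With $\varpi=\phi\beta$ in hand, Belousov's theorem makes $\delta_1:=\psi\varpi^{-1}=\psi\beta^{-1}\phi^{-1}$ and $\delta_2:=\varpi\psi^{-1}$ a pair of mutually inverse isomorphisms between $\mathcal{P}(Q,\cdot)$ and $\Phi(Q,\cdot)$, which is exactly the dotted arrow in each diagram; evaluating these on $U_\gamma$ (resp.\ $U_\alpha$) through the correspondences of Theorem~\ref{18} shows that $\delta_1$ carries the $\mathcal{P}$-copy of $U_\gamma$ to its $\Phi$-copy, giving $U_\gamma\stackrel{\delta_1}{\cong}U_\gamma$, and dually for $\delta_2$ and $U_\alpha$.

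The main obstacle I anticipate is concentrated in parts 3--4 and is twofold. The real content is the identity $\varpi=\phi\beta$, namely that a $\mu$-regular map and its adjoint agree at $e$; everything else is diagram bookkeeping resting on Theorem~\ref{18}. A secondary point to handle carefully is a codomain reconciliation: in Theorem~\ref{sim} the map $\psi$ officially lands in $N_\rho(Q,\cdot)$, whereas the diagrams of parts 3 and 4 use $N_\mu(Q,\cdot)$ as the common target. This is legitimate precisely because Corollary~\ref{9} places the images $\sigma(x)^{-1}\sigma(x\gamma^{-1})$ and $\sigma(x)^{-1}[\sigma(x)]\alpha^{-1}$ in $N_\rho(Q,\cdot)\cap N_\mu(Q,\cdot)$, so on the elements at issue the two nuclei coincide and the square is consistent.
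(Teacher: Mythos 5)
Your proposal is correct and takes essentially the same route as the paper, whose entire proof is the citation ``follows from Theorem~\ref{18} and Theorem~\ref{sim}'': you assemble the pointwise correspondences of Theorem~\ref{18} for parts 1--2 and invoke Belousov's isomorphisms for parts 3--4, exactly as intended. The one ingredient you make explicit, the identity $\varpi=\phi\beta$ via $U'=L_{eU}$ and hence $eU'=eU$ for $\mu$-regular $U$, is precisely the fact implicit in the proof of Theorem~\ref{sim}, so your write-up is a faithful filling-in of the paper's argument rather than a different one.
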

\begin{proof}
The proof follows from Theorem~\ref{18} and Theorem~\ref{sim}.\qed
\end{proof}

\end{document}